\documentclass[12pt]{amsart}
\usepackage{amsmath,amsfonts,latexsym,graphicx,amssymb,url}
\usepackage{hyperref}
\usepackage{txfonts} 
\usepackage{amsmath,txfonts,pifont,bbding,pxfonts,manfnt}
\usepackage{psfrag}
\usepackage[active]{srcltx}
\usepackage{pdfsync}
\usepackage[none]{hyphenat}
\newcommand{\average}{{\mathchoice {\kern1ex\vcenter{\hrule
height.4pt width 6pt depth0pt} \kern-9.7pt}
{\kern1ex\vcenter{\hrule height.4pt width 4.3pt depth0pt}
\kern-7pt} {} {} }}

\setlength{\headheight}{15pt}
\setlength{\topmargin}{10pt}
\setlength{\headsep}{30pt} 
\setlength{\textwidth}{15cm} 
\setlength{\textheight}{21.5cm}
\setlength{\oddsidemargin}{1cm} 
\setlength{\evensidemargin}{1cm} 

\begin{document}

\newcommand{\dist}{\text{dist}} 
\newcommand{\diam}{\text{diam}}
\newcommand{\trace}{\mathrm{trace}}
\newcommand{\R}{{\mathbb R}} 
\newcommand{\C}{{\mathbb C}}
\newcommand{\Z}{{\mathbb Z}}
\newcommand{\N}{{\mathbb N}}
\newcommand{\gradg}{\nabla_{\G}}
\newcommand{\e}{\varepsilon} 
\newcommand{\calS}{{\mathcal S}}
\newcommand{\calM}{{\mathcal M}}
\newcommand{\bH}{\mathrm{\bf H}}
\newcommand{\calL}{\mathcal L}
\newcommand{\calW}{{\mathrm W}}


\newcommand{\Rn}{\mathbb R^n}
\newcommand{\Rm}{\mathbb R^m}
\renewcommand{\L}[1]{\mathcal L^{#1}}
\newcommand{\G}{\mathbb G}
\newcommand{\U}{\mathcal U}
\newcommand{\M}{\mathcal M}
\newcommand{\eps}{\epsilon}
\newcommand{\BVG}{BV_{\G}(\Omega)}
\newcommand{\no}{\noindent}
\newcommand{\ro}{\varrho}
\newcommand{\rn}[1]{{\mathbb R}^{#1}}
\newcommand{\res}{\mathop{\hbox{\vrule height 7pt width .5pt depth 0pt
\vrule height .5pt width 6pt depth 0pt}}\nolimits}
\newcommand{\hhd}[1]{{\mathcal H}_d^{#1}} \newcommand{\hsd}[1]{{\mathcal
S}_d^{#1}} \renewcommand{\H}{\mathbb H}
\newcommand{\BVGL}{BV_{\G,{\rm loc}}}
\newcommand{\GH}{H\G}
\renewcommand{\diam}{\mbox{diam}\,}
\renewcommand{\div}{\mbox{div}\,}
\newcommand{\divg}{\mathrm{div}_{\G}\,}
\newcommand{\norm}[1]{\|{#1}\|_{\infty}} \newcommand{\modul}[1]{|{#1}|}
\newcommand{\per}[2]{|\partial {#1}|_{\G}({#2})}
\newcommand{\Per}[1]{|\partial {#1}|_{\G}} \newcommand{\scal}[3]{\langle
{#1} , {#2}\rangle_{#3}} \newcommand{\Scal}[2]{\langle {#1} ,
{#2}\rangle}
\newcommand{\fron}{\partial^{*}_{\G}}
\newcommand{\hs}[2]{S^+_{\H}({#1},{#2})} \newcommand{\test}{\mathbf
C^1_0(\G,\GH)} \newcommand{\Test}[1]{\mathbf C^1_0({#1},\GH)}
\newcommand{\card}{\mbox{card}}
\newcommand{\bom}{\bar{\gamma}}
\newcommand{\shpiu}{S^+_{\G}}
\newcommand{\shmeno}{S^-_{\G}}
\newcommand{\CG}{\mathbf C^1_{\G}}
\newcommand{\CH}{\mathbf C^1_{\H}}
\newcommand{\di}{\mathrm{div}_{X}\,}
\newcommand{\norma}{\vert\!\vert}

\theoremstyle{plain}
\newtheorem{theorem}{Theorem}[section]
\newtheorem{corollary}[theorem]{Corollary}
\newtheorem{lemma}[theorem]{Lemma}
\newtheorem{proposition}[theorem]{Proposition}
\newtheorem{definition}[theorem]{Definition}
\newtheorem{remark}[theorem]{Remark}
\providecommand{\bysame}{\makebox[3em]{\hrulefill}\thinspace}
\renewcommand{\theequation}{\thesection.\arabic{equation}}

\title[Interior second derivative  estimates]{interior second derivative  estimates for solutions to the linearized Monge--Amp\`ere equation}
\author[C.  E. Guti\'errez and T. Nguyen]{Cristian E. Guti\'errez  and  Truyen Nguyen}
\thanks{\today}
\thanks{CG  gratefully acknowledges the support  provided by NSF grant DMS-0901430}
\thanks{TN gratefully acknowledges the support provided by  NSF grant DMS-0901449}
\address{Cristian Guti\'errez , Temple University, Department of Mathematics, Philadelphia, PA 19122, USA.} 
\email{gutierre@temple.edu}
\address{Truyen Nguyen, The University of Akron, Department of Mathematics, Akron, OH 44325, USA.}
\email{tnguyen@uakron.edu}

\begin{abstract}
Let $\Omega\subset \R^n$ be a bounded convex domain and $\phi\in C(\overline\Omega)$  be a convex function such that $\phi$ is sufficiently smooth on $\partial\Omega$ and the  Monge--Amp\`ere measure $\det D^2\phi$ is bounded away from zero and infinity in $\Omega$.
 The corresponding linearized Monge--Amp\`ere equation is
\[
\trace(\Phi D^2 u) =f,
\]
where $\Phi := \det D^2 \phi ~ (D^2\phi)^{-1}$ is the matrix of cofactors of $D^2\phi$. We prove a conjecture in \cite{GT} about the relationship between $L^p$ estimates for $D^2 u$ and the closeness between $\det D^2\phi$ and one. As a consequence, we obtain interior $W^{2,p}$ estimates for solutions to such equation whenever the   measure $\det D^2\phi$ is given by a continuous density   and the function $f$ belongs to $L^q(\Omega)$ for some $q> \max{\{p,n\}}$.
\end{abstract}

\maketitle

\setcounter{equation}{0}
\section{Introduction}
%
%
%

 $L^p$-estimates play a fundamental role in the theory of second-order elliptic partial differential equations, with
many works devoted to the topic, see \cite[Chapter~9]{GiT} and \cite[Chapter~7]{CC}.
For linear equations of the form
$\text{trace}(A(x)D^2u(x))=f(x)$ in a domain $\Omega\subset \R^n$ with
\begin{equation}\label{uniformly-elliptic}
\lambda |\xi |^2\le\langle A(x)\xi ,\xi \rangle \le \Lambda |\xi
|^2\quad \mbox{for all $x\in \Omega$ and $\xi \in \R^n$},
\end{equation}
$L^p$-estimates for  second derivatives of solutions were derived in the
1950's as a consequence of the celebrated Calder\'on and Zygmund theory of singular
integrals. Precisely, if the matrix $A(x)$ is continuous in $\Omega$,
then for any domain $\Omega^\prime\Subset \Omega$ and any
$1<p<\infty$ we have
\begin{equation}\label{eq:calderonzymundinequality}
\|D^2u\|_{L^p(\Omega^\prime)}\le
C(\|u\|_{L^p(\Omega)}+\|f\|_{L^p(\Omega)}),
\end{equation}
where $C$ is a constant depending only on $p,\lambda, \Lambda, n, \text{dist}(\Omega^\prime,\partial \Omega)$ and the modulus of
continuity of $A(x)$. The continuity assumption on the coefficient matrix is essential  when $n\geq 3$.  Indeed, it is shown in \cite{U} and \cite{PT} that if $A(x)$ satisfies \eqref{uniformly-elliptic} and is merely
measurable, then \eqref{eq:calderonzymundinequality} is false for $p\geq 1$. However, it is proved in \cite{E, L} that estimates for second derivatives that do not depend on the continuity of $A(x)$ do hold when $p>0$ is sufficiently small.

$L^p$-estimates for second derivatives of solutions to fully nonlinear uniformly elliptic equations of the form $F(D^2 u,x) = f(x)$ were studied by 
Caffarelli \cite{C1}. In this  fundamental work, he established Calder\'on-Zygmund
type interior $W^{2,p}$ estimates for viscosity solutions under the assumptions that
$F(D^2 u, x)$ is suitably close to $F(D^2 u, 0)$, and solutions to the frozen equation $F(D^2 u, 0)=0$ admit interior $C^{1,1}$ estimates.  
For more details and related results to those of Caffarelli, we refer to \cite[Chapter~7]{CC} and \cite{Es,E, Sw, WL}.
By extending further his perturbation method in \cite{C1}, 
   Caffarelli \cite{C3} was able to derive  interior $W^{2,p}$ estimates for convex solutions to the Monge--Amp\`ere equation $\det D^2 \phi=g(x)$ under the optimal  condition that $g$ is continuous and bounded away from zero and infinity (see also \cite[Chapter 6]{G}, \cite{H, dPF} and the recent corresponding boundary estimates in \cite{S2}).

In this paper we consider the linearized Monge--Amp\`ere equation.
Let $\Omega\subset \R^n$ be a normalized convex domain and $\phi \in C(\overline \Omega)$ be a convex function satisfying 
$\lambda \leq \text{det}D^2\phi=g(x) \leq\Lambda$ in $\Omega$ and $\phi=0$ on $\partial\Omega$. The
linearized Monge--Amp\`ere equation corresponding to $\phi$ is
\begin{equation}\label{linear} 
\calL_{\phi}u :=\text{trace}(\Phi D^2u)=f(x)\quad\mbox{in}\quad\Omega
\end{equation} where $\Phi := (\text{det}D^2\phi)\, (D^2\phi)^{-1}$
is the matrix of cofactors of $D^2\phi$.
We note that  $\calL_\phi$ is both a non divergence and divergence differential operator which is  degenerate elliptic, that is, the matrix $\Phi(x)$ is positive semi-definite and does not satisfy \eqref{uniformly-elliptic}.
The equation \eqref{linear}  is of great importance as it appears in a number of problems. For example,
it appears in affine differential geometry in the solution of the affine Bernstein problem  (\cite{T, TW1, TW2, TW3, TW4}), and in the Aubreu's equation arising in the differential geometry of toric varieties (\cite{D1, D2, D3, D4,Z1, Z2}).
In addition, the equation appears in fluid mechanics in the semigeostrophic system which is an approximation to the incompressible Euler equation and is used in meteorology to study atmospheric flows (\cite{CNP, Lo}).
The linearized Monge--Amp\`ere equation was first studied by Caffarelli and Guti\'errez in \cite{CG2} where it is proved  
that nonnegative solutions to $\mathcal L_{\phi}u=0$ satisfy a uniform Harnack's inequality yielding, in particular, interior H\"older continuity of solutions. By using these interior  H\"older estimates and perturbation arguments,  we recently established in \cite{GN} Cordes--Nirenberg type interior $C^{1,\alpha}$  estimates for solutions to \eqref{linear}.

The purpose in this paper is to study the $L^p$ integrability of
second derivatives of solutions to  the   equation \eqref{linear}.
A previous result in this direction
is proved  by Guti\'errez and Tournier in \cite{GT}: for any domain $\Omega'\Subset\Omega$,
there exist $p>0$ small and $C>0$ depending only on
$\lambda, \Lambda, n$ and $\text{dist}(\Omega^\prime,\Omega)$ such
that
\begin{equation}\label{eq:mainestimate}
 \|D^2u\|_{L^p(\Omega^\prime)}\leq
C(\|u\|_{L^\infty(\Omega)}+\|f\|_{L^n(\Omega)})
\end{equation}
for all solutions $u\in C^2(\Omega)$ of  \eqref{linear}. Notice that since $\calL_\phi \phi = n \det D^2\phi =n g(x)$, it follows from Wang's counterexample \cite{W} that \eqref{eq:mainestimate} is false for any
$p>1$. In fact, if we hope the estimate \eqref{eq:mainestimate} to hold for large values of $p$, one needs to assume in addition that $g\in C(\Omega)$, see \cite[Section~8]{GT} for more details.
 In light of this, 
it was conjectured in \cite{GT} that the $L^p$-integrability of the second derivatives of $u$ in \eqref{linear} 
improves when $\det D^2\phi$ gets closer to one; in other words, if $1-\epsilon\leq \det D^2\phi\leq 1+\epsilon$, then the exponent $p=p(\epsilon)$ in \eqref{eq:mainestimate} satisfies $p(\epsilon)\to +\infty$ as $\epsilon\to 0$. 

In this article we solve the above conjecture in the affirmative, Theorem \ref{premainhomo}. As a consequence,
we obtain the following main result of the paper.
\begin{theorem}\label{mainhomo}
Let $\Omega$ be a normalized convex domain and $g\in C(\Omega)$ with $0<\lambda \leq g(x)\leq \Lambda$. Suppose $u\in W^{2,n}_{loc}(\Omega)$ is a solution of $\calL_\phi u=f$ in $\Omega$, where 
 $\phi\in C(\overline \Omega)$ is a convex function satisfying $\det D^2\phi =g$ in $\Omega$ and  $\phi=0$ on $\partial \Omega$. Let  $\Omega'\Subset \Omega$,  $p>1$ and $\max{\{n,p \}}<q <\infty$.
 Then there exists $C>0$ depending only on $p, q, \lambda, \Lambda, n, \dist(\Omega',\partial\Omega)$ and the modulus of continuity of $g$ such that
\begin{equation}\label{L^p-LMA}
\|D^2u\|_{L^p(\Omega')}\leq
C\left(\|u\|_{L^\infty(\Omega)} + \|f\|_{L^{q}(\Omega)}\right).
\end{equation}
\end{theorem}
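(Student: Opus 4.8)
The plan is to deduce Theorem~\ref{mainhomo} from Theorem~\ref{premainhomo} by localizing to small sections of $\phi$, on which the continuous density $g$ is nearly constant, affinely normalizing those sections so that the rescaled Monge--Amp\`ere measure is as close to one as Theorem~\ref{premainhomo} requires, and then patching the resulting local estimates by a covering argument. I would fix $p>1$ and $q\in(\max\{n,p\},\infty)$ and choose intermediate exponents $p<s<s'<q$; this is possible precisely because $q>\max\{n,p\}$. Applied with exponent $s'$ (legitimate since $q>\max\{n,s'\}$), Theorem~\ref{premainhomo} yields $\e_0=\e_0(p,q,\lambda,\Lambda,n)>0$ such that whenever $U$ is a normalized convex domain, $\psi\in C(\overline U)$ is convex with $1-\e_0\le\det D^2\psi\le1+\e_0$ and $\psi=0$ on $\partial U$, and $w\in W^{2,n}_{loc}(U)$ solves $\calL_\psi w=h$, then $\|D^2w\|_{L^{s'}(U')}\le C(\|w\|_{L^\infty(U)}+\|h\|_{L^q(U)})$ for $U'\Subset U$; I also shrink $\e_0$ below the dimensional threshold in the $W^{2,m}$ estimates for the Monge--Amp\`ere equation used below. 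Since $g$ is uniformly continuous on a compact neighborhood $\overline{\Omega''}$ of $\Omega'$ with $\Omega'\Subset\Omega''\Subset\Omega$, and $\diam S_\phi(x_0,t)\to0$ as $t\to0$ uniformly for $x_0$ in compact subsets of $\Omega$ (a standard strict-convexity property of sections, valid because $\lambda\le\det D^2\phi\le\Lambda$), I fix once and for all a small $r_0>0$, depending on $p,q,\lambda,\Lambda,n,\dist(\Omega',\partial\Omega)$ and the modulus of continuity $\omega$ of $g$, so that $S_\phi(x_0,r_0)\Subset\Omega''$ and $\mathrm{osc}_{S_\phi(x_0,r_0)}\,g\le\e_0\lambda$ for every $x_0\in\Omega'$.

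Given $x_0\in\Omega'$, write $S_0:=S_\phi(x_0,r_0)$, let $T_0x=A_0x+b_0$ normalize $S_0$ (so $B_1\subset T_0(S_0)\subset B_n$, whence $|\det A_0|\approx|S_0|^{-1}\approx r_0^{-n/2}$ with constants depending only on $\lambda,\Lambda,n$), and let $\ell$ be the affine function with $S_0=\{\phi<\ell\}$. With $\alpha:=\bigl((\det A_0)^2/g(x_0)\bigr)^{1/n}\approx r_0^{-1}$ set $\bar\phi(y):=\alpha\,[\phi-\ell](T_0^{-1}y)$, $\bar u(y):=u(T_0^{-1}y)$ and $\bar f(y):=\alpha^{n-1}(\det A_0)^{-2}f(T_0^{-1}y)$ on $\overline\Omega_0:=T_0(S_0)$. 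Using $\trace(MN)=\trace(NM)$ and $\mathrm{cof}(BC)=\mathrm{cof}(B)\,\mathrm{cof}(C)$ one checks that $\calL_{\bar\phi}\bar u=\bar f$ in $\overline\Omega_0$, $\bar\phi=0$ on $\partial\overline\Omega_0$, $\bar u\in W^{2,n}_{loc}(\overline\Omega_0)$, and $\det D^2\bar\phi(y)=g(T_0^{-1}y)/g(x_0)\in[1-\e_0,1+\e_0]$. Moreover $\|\bar u\|_{L^\infty(\overline\Omega_0)}=\|u\|_{L^\infty(S_0)}$, and since $\alpha^{n-1}|\det A_0|^{-2}\approx r_0$ one gets $\|\bar f\|_{L^q(\overline\Omega_0)}\le C\,r_0^{\,1-n/(2q)}\|f\|_{L^q(S_0)}$. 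Hence Theorem~\ref{premainhomo} applies on $\overline\Omega_0$ and bounds $\|D^2\bar u\|_{L^{s'}}$ on the inner region $T_0(S_\phi(x_0,r_0/2))$ by $C(\|u\|_{L^\infty(\Omega)}+\|f\|_{L^q(\Omega)})$, with $C=C(p,q,\lambda,\Lambda,n,\dist(\Omega',\partial\Omega))$.

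The main obstacle is that the Euclidean Hessian does not transfer well under $T_0$: since $D^2\bar u=(A_0^{-1})^tD^2u\,A_0^{-1}$ and the section $S_0$ may be strongly eccentric (with $g$ merely continuous, $D^2\phi$ need not be locally bounded), $\|A_0\|$ and $\|A_0^{-1}\|$ are not controlled by $\lambda,\Lambda,n$, so $\|D^2u\|_{L^p(S_0)}$ cannot be read off directly. The cure is to work with the affine-invariant quantity $N_\phi[u](x):=\|(D^2\phi(x))^{-1/2}D^2u(x)(D^2\phi(x))^{-1/2}\|$: because the eigenvalues of $(D^2\bar\phi)^{-1}D^2\bar u$ equal $\alpha^{-1}$ times those of $(D^2\phi)^{-1}D^2u$, and both are similar to symmetric matrices, one has $N_{\bar\phi}[\bar u](y)=\alpha^{-1}N_\phi[u](T_0^{-1}y)$ pointwise. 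On $\overline\Omega_0$, where $\det D^2\bar\phi$ has oscillation at most $2\e_0$, the quantitative $W^{2,m}$ estimates for the Monge--Amp\`ere equation \cite{C3} give $\|\mathrm{cof}(D^2\bar\phi)\|_{L^{r'}(\overline\Omega_0')}\le C(r',n)$; combining $N_{\bar\phi}[\bar u]\le C\,|\mathrm{cof}(D^2\bar\phi)|\,|D^2\bar u|$ with H\"older ($\tfrac1s=\tfrac1{r'}+\tfrac1{s'}$) and the above bound on $\|D^2\bar u\|_{L^{s'}}$ yields $\|N_{\bar\phi}[\bar u]\|_{L^{s}(T_0(S_\phi(x_0,r_0/2)))}\le C(\|u\|_{L^\infty(\Omega)}+\|f\|_{L^q(\Omega)})$. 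Undoing the change of variables (the resulting factor $\alpha^{s}|\det A_0|^{-1}\approx r_0^{\,n/2-s}$ is a fixed constant) gives $\int_{S_\phi(x_0,r_0/2)}N_\phi[u]^{\,s}\,dx\le C(\|u\|_{L^\infty(\Omega)}+\|f\|_{L^q(\Omega)})^{s}$ with $C$ depending only on the allowed parameters.

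Finally, from $|D^2u|\le|D^2\phi|\,N_\phi[u]$ and H\"older ($\tfrac1p=\tfrac1r+\tfrac1s$),
\[
\int_{S_\phi(x_0,r_0/2)}|D^2u|^{\,p}\,dx\le\Bigl(\int_{\Omega''}|D^2\phi|^{\,r}\,dx\Bigr)^{p/r}\Bigl(\int_{S_\phi(x_0,r_0/2)}N_\phi[u]^{\,s}\,dx\Bigr)^{p/s}\le C\bigl(\|u\|_{L^\infty(\Omega)}+\|f\|_{L^q(\Omega)}\bigr)^{p},
\]
since $\int_{\Omega''}|D^2\phi|^{r}<\infty$ with a bound depending only on $r,n,\lambda,\Lambda,\dist(\Omega'',\partial\Omega),\omega$ — this is again Caffarelli's $W^{2,p}$ theorem \cite{C3}, now for the continuous density $g$, and is the second place where continuity of $g$ is essential. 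Covering $\overline{\Omega'}$ by finitely many inner sections $S_\phi(x_i,r_0/2)$, $i=1,\dots,N$, with bounded overlap (a Besicovitch-type covering with sections of $\phi$, as in \cite{CG2}) and with $N\le C\,|\Omega'|\,r_0^{-n/2}$ by the volume estimate $|S_\phi(x,r_0/2)|\approx r_0^{n/2}$, one sums these bounds to obtain $\|D^2u\|_{L^p(\Omega')}^p\le\sum_{i=1}^N\int_{S_\phi(x_i,r_0/2)}|D^2u|^{p}\,dx\le C(\|u\|_{L^\infty(\Omega)}+\|f\|_{L^q(\Omega)})^p$, which is \eqref{L^p-LMA}. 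The two points needing genuine care are the affine bookkeeping of the previous paragraph — handled through the invariant quantity $N_\phi[u]$ and \cite{C3} — and the $\phi$-independence of the overlap constant and of $N$, which rests on the uniform volume and engulfing properties of sections.
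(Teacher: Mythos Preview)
Your argument is essentially correct but takes a genuinely different, and considerably more elaborate, route than the paper. The difference lies in how you return from the normalized section to the original domain. You worry that the normalizing map $A_0$ may have uncontrolled norm and therefore pass through the affine-invariant quantity $N_\phi[u]=\|(D^2\phi)^{-1/2}D^2u\,(D^2\phi)^{-1/2}\|$, invoking Caffarelli's $W^{2,p}$ estimates for the Monge--Amp\`ere equation twice (once on the normalized section to control $\mathrm{cof}(D^2\bar\phi)$, once on $\Omega''$ to control $D^2\phi$), with two H\"older steps linking them. The paper instead observes that once the section height $\delta$ is fixed, $\|A\|$ \emph{is} controlled: the standard ball-sandwich property $B(x,K_1\delta)\subset S_\phi(x,\delta)\subset B(x,K_2\delta^b)$ (valid under $\lambda\le\det D^2\phi\le\Lambda$; see \cite[Theorem~3.3.8]{G}) gives $A\,B(x,K_1\delta)+b\subset B_n(0)$, hence $\|A\|\le C\delta^{-1}$. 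With this, $\|D^2u\|_{L^p(S_{1/2})}\le\|A\|^2|\det A|^{-1/p}\|D^2\tilde u\|_{L^p(\tilde\Omega_{1/2})}$ follows directly from Theorem~\ref{premainhomo}, yielding the estimate with explicit powers $\delta^{n/2p-2}$ and $\delta^{n/2p-n/2q-1}$; since $\delta$ is fixed by the modulus of continuity of $g$, these are admissible constants. Finally the paper covers $\Omega'$ by Euclidean balls $B(x_j,K_1\delta)$ (each inside a section), avoiding the section-covering machinery you invoke. Your concern that $\|A_0\|$ is ``not controlled by $\lambda,\Lambda,n$'' is literally true but not an obstacle: the constant in \eqref{L^p-LMA} is already allowed to depend on $\omega$, and that is precisely where the $\delta$-dependence of $\|A\|$ is absorbed. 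Your approach does give a cleaner separation of the roles of $\phi$ and $u$, at the cost of an additional dependence on \cite{C3} and two extra exponents $s,s'$ that the paper does not need.
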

The conditions on the Monge-Amp\`ere measure $\det D^2\phi$ are sharp  and  the constant in \eqref{L^p-LMA} depends on $\det D^2\phi$ and not on the maximum or minimum of eigenvalues of $D^2\phi$.
Our result can be viewed as a degenerate counterpart of the classical Calder\'on-Zygmund estimates \eqref{eq:calderonzymundinequality} for linear uniformly elliptic equations in non divergence form, and Caffarelli's interior $W^{2,p}$ estimates \cite[Theorem~1]{C1}, \cite[Chapter 7]{CC} for fully nonlinear uniformly elliptic equations.


 In order to address the lack of uniform ellipticity of the linearized Monge-Amp\`ere operator, we follow the strategy  in \cite{CG2} by working with  sections of solutions to the Monge-Amp\`ere equation.
The role that the sections play in our analysis is similar to that of Euclidean balls in the theory of uniformly elliptic equations.  In addition, to measure the degree of regularity of the solution we introduce the sets $G_M(u,\Omega)$ where the solution $u$ is touched by tangent paraboloids, see Definition~\ref{def:GMs}. In contrast with \cite[Definition~3.5]{GT}, the sets $G_M(u,\Omega)$ are now invariant by affine transformations. 
We note  that unlike the theory in  \cite[Chapter~7]{CC}, where the standard Euclidean distance is used,  our tangent paraboloids are defined with respect to a quasi distance induced by the solution $\phi$ of the Monge-Amp\`ere equation. With this new definition, our first step is to derive  rough  density estimates for the sets $G_M(u,\Omega)$ which are achieved by following the method in \cite{GT}. The next crucial step in solving the conjecture is to accelerate the initial density estimates. To  make this breakthrough, we use
a key idea introduced in \cite{GN}, that is, to compare solutions of two different linearized Monge-Amp\`ere equations. Precisely, we  compare solutions of $\calL_\phi u =f$ with solutions of $\calL_w h =0$, having the same Dirichlet boundary data, where $w$ is the solution of the Monge-Amp\`ere equation $\det D^2w=1$ in $\Omega$ and $w=0$ on $\partial\Omega$.
It is also important to know that the coefficient matrices of two different linearized equations are close in $L^p$-norm when the determinants of the corresponding convex functions are close in $L^\infty$-norm. This is given in our  recent work \cite{GN}.
These two comparison results allow us  to estimate explicitly $\|u-h\|_{L^\infty}$ in terms of $\|\det D^2\phi -1\|_{L^\infty}$, and by using this  approximation we can perform the acceleration process to obtain the necessary density estimates for the sets 
$G_M(u,\Omega)$. Finally and to conclude the proof of the conjecture, all these estimates permit us to use the covering theorems for sections of solutions to the Monge-Amp\`ere equation proved in \cite{CG1,CG2}.

\bigskip
To give more perspective, we mention the following recent work for the linearized Monge-Amp\`ere equation:
Sobolev type inequalities associated to the linearized operator $\calL_\phi$ (Tian and Wang \cite{TiW}), Liouville property for solutions to $\calL_\phi u =0$ in $\R^2$ ( Savin \cite{S1})
and 
 boundary $C^{1,\alpha}$ estimates  for $\calL_\phi u =f$ and its applications (Le and Savin \cite{LS1, LS2}).

The paper is organized as follows.
Section \ref{preliminaries} is devoted to preliminary results for solutions $\phi$ to the Monge-Amp\`ere equation 
that will be used later. We also introduce there a quasi metric and  the sets $G_M(u,\Omega)$ where the solution $u$ to the linearized equation is touched by tangent paraboloids associated to the quasi distance.
In Section \ref{sec:densitylemmas} we establish density estimates for the set $G_M(u,\Omega)$  and use them to derive the initial power decay for the distribution function giving small integrability  of $D^2u$.
Finally, Section \ref{sec:L^p-estimate} contains the main estimates in the paper showing how the integrability improves when 
$\det D^2\phi$ gets closer to one.

\section{Preliminary results}\label{preliminaries}

\subsection{Some properties for the Monge-Amp\`ere equation}
Given an open convex set $\Omega\subset\R^n$ and  a function $\phi\in C(\Omega)$, $\partial \phi$ denotes
the sub differential of $\phi$. The Monge-Amp\`ere measure
associated with $\phi$ is defined by
$M\phi(E) 
:= |\partial \phi (E)|$,  for all Borel subsets $E\subset \Omega$.
 The convex set $\Omega$ is called a {\it normalized convex domain} if $B_{1}(0) \subset \Omega
 \subset B_{n}(0)$. Here $B_R(y)$ denotes the Euclidean ball with radius $R$ centered at $y$.
Observe that by Fritz John's lemma, every bounded convex domain with non empty interior can be normalized, i.e.,
there is an invertible affine transformation $T$ with
$B_{1}(0) \subset T(S) \subset B_{n}
(0)$.
 A {\it section of a convex function $\phi\in C^1(\Omega)$} centered at $\bar x$ and with height $t$ is
defined by
\begin{equation*}\label{def:section}
S_\phi(\bar x, t) = \Big\{x\in \Omega: \phi(x)< \phi(\bar x) +  \nabla \phi(\bar x)\cdot (x-\bar x) + t\Big\}.
\end{equation*}
If $\phi=0$ on $\partial \Omega$, then for $0 <\alpha <1$ we set
\begin{equation}\label{def:omegaalpha}
\Omega_{\alpha}=\{x\in \Omega: \phi(x)<(1-\alpha
)\,\min_{\Omega}\phi \},
\end{equation}
and notice that $\Omega_\alpha$ is a section of $\phi$ at the minimum of $\phi$, i.e., $\Omega_\alpha=S_\phi(x_0,-\alpha\phi(x_0))$
where $x_0\in \Omega$ is such that  $\min_{\Omega}\phi=\phi(x_0)$.
We are going to list some basic  properties related to sections that will be used later.
All results in this subsection hold under the assumption:
\bigskip
\\
\noindent $(\bH)$ \ \ \ {\it $\Omega$ is a normalized convex domain  and $\phi \in C(\overline\Omega)$ is a convex function such that}
\[\lambda \leq M\phi \leq \Lambda \quad\mbox{in}\quad \Omega \quad \mbox{ and }\quad \phi =0 \quad 
\mbox{on}\quad
\partial\Omega.
\]
It is  known  from the works of Caffarelli \cite{C2, C4} that $\phi$ is strictly convex and $C^{1,\alpha}$ in the interior of $\Omega$. Moreover, we have the following lemma from \cite{GH} (see  \cite[Theorem~3.3.10]{G}).

\begin{lemma}\label{containment2}
Let  $0<\alpha < \beta<1$. Then for any $x\in \Omega_\alpha$, we have $S_\phi(
x,C_0(\beta -\alpha )^{\gamma}) \subset \Omega_\beta$ for some
$C_0$ and $\gamma$ depending only on $n$, $\lambda$ and $\Lambda$. Consequently,
there exists $\eta=\eta(\alpha,n,\lambda,\Lambda)>0$  such that $S_\phi ( x,t)
\Subset \Omega$ for all $x\in
\Omega_{\alpha}$ and $t\leq \eta$.
\end{lemma}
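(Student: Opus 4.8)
The plan is to reduce the inclusion to Caffarelli's quantitative strict convexity for the Monge--Amp\`ere equation by an elementary ``ray-convexity'' argument. First, record the basic geometry. Let $x_0$ be the minimum point of $\phi$ and $m:=-\phi(x_0)=-\min_\Omega\phi$; since $\nabla\phi(x_0)=0$ one has $\Omega_\alpha=S_\phi(x_0,\alpha m)$ and $\Omega_\beta=S_\phi(x_0,\beta m)$, and comparison with the radial Monge--Amp\`ere solutions of $\det D^2 w=\lambda$ and $\det D^2 w=\Lambda$ on $B_1\subset\Omega\subset B_n$ gives $c_1(n,\lambda)\le m\le c_2(n,\Lambda)$. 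Since $M\phi\ge\lambda>0$ forces $\phi$ to be strictly negative in $\mathrm{int}\,\Omega$ (it vanishes only on $\partial\Omega$), the closure $\overline{\Omega_\beta}$ is a compact subset of $\mathrm{int}\,\Omega$, so $\Omega_\beta\Subset\Omega$ for every $\beta<1$; granting the first inclusion, this yields the ``consequently'' clause at once, by choosing any $\beta\in(\alpha,1)$ and $\eta:=C_0(\beta-\alpha)^\gamma$.

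For the first inclusion, fix $x\in\Omega_\alpha$ and write an arbitrary $y\in\Omega$ as $y=(1-r)x+rz$ with $z\in\partial\Omega$, $r\in(0,1)$. Convexity gives $\phi(y)\le(1-r)\phi(x)<-(1-r)(1-\alpha)m$, so $y\in\Omega_\beta$ as soon as $r\le r_0:=(\beta-\alpha)/(1-\alpha)$; equivalently, $S_\phi(x,h)\subset\Omega_\beta$ once $S_\phi(x,h)$ lies in the dilate $(1-r_0)x+r_0\Omega$, which is guaranteed once $\diam S_\phi(x,h)<r_0\,\dist(\Omega_\alpha,\partial\Omega)\ (\le r_0\,\dist(x,\partial\Omega))$. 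Thus it suffices to show that sections centered in $\Omega_\alpha$ of height $C_0(\beta-\alpha)^\gamma$ have diameter below $r_0\,\dist(\Omega_\alpha,\partial\Omega)$. Here I would invoke Caffarelli's strict convexity and interior regularity \cite{C2,C4} in quantitative form: for exponents $\sigma,\kappa>0$ and constants depending only on $n,\lambda,\Lambda$ (and at worst on a power $(1-\alpha)^{-\nu}$), one has $\diam S_\phi(x,h)\le C(1-\alpha)^{-\nu}h^{\sigma}$ for $x\in\Omega_\alpha$ and $h$ small, together with $\dist(\Omega_\alpha,\partial\Omega)\ge c(1-\alpha)^{\kappa}$. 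Plugging these in and absorbing the powers of $1-\alpha$ via $1-\alpha\ge\beta-\alpha$ turns the required inequality into $h<(\mathrm{const})(\beta-\alpha)^{\gamma}$ with $\gamma$ and the constant depending only on $n,\lambda,\Lambda$; this choice of height is the assertion. (An affine normalization of $\Omega_\beta$ can be used to streamline the derivation of these two bounds while keeping all structural constants under control.)

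The main obstacle is obtaining Caffarelli's strict-convexity estimates with the correct --- polynomial --- dependence on the distance to $\partial\Omega$, so that the resulting $C_0,\gamma$ are independent of $\alpha$ and $\beta$. This is the heart of the Monge--Amp\`ere regularity theory and is exactly what is packaged in \cite{GH} and \cite[Theorem~3.3.10]{G}; a self-contained proof would run the familiar iteration --- show $S_\phi(x,h/2)\Subset S_\phi(x,h)$ with a uniform squeezing factor, iterate to obtain geometric decay of the section diameters, and sum the geometric series. I would therefore present the elementary ray-convexity reduction in full and quote the quantitative strict convexity estimate for the remaining diameter and distance bounds.
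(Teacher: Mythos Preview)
The paper does not give a proof of this lemma; it simply cites \cite{GH} and \cite[Theorem~3.3.10]{G}. Your proposal therefore goes further than the paper does, and the elementary ray-convexity reduction you present is correct: writing $y=(1-r)x+rz$ with $z\in\partial\Omega$, convexity and $\phi|_{\partial\Omega}=0$ give $\phi(y)\le(1-r)\phi(x)$, so the condition $r\le(\beta-\alpha)/(1-\alpha)$ forces $y\in\Omega_\beta$, and hence it suffices to control $\diam S_\phi(x,h)$ against $r_0\,\dist(x,\partial\Omega)$. The distance bound $\dist(\Omega_\alpha,\partial\Omega)\ge c_n(1-\alpha)^n$ is exactly the Aleksandrov estimate used elsewhere in the paper.

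The honest gap you flag is real, though: the diameter estimate $\diam S_\phi(x,h)\le C(1-\alpha)^{-\nu}h^\sigma$ with polynomial dependence on $1-\alpha$ is not a soft consequence of Caffarelli's qualitative strict convexity \cite{C2,C4}; obtaining it with structural constants is precisely the content of \cite{GH} and \cite[Theorem~3.3.10]{G}. In fact, the standard statements of the form $S_\phi(x,t)\subset B(x,K_2 t^b)$ (as in \cite[Theorem~3.3.8]{G}) already presuppose $S_\phi(x,t)\Subset\Omega$, which is what you are trying to establish, so one must be careful to avoid circularity. Your parenthetical remark about iterating the squeezing $S_\phi(x,h/2)\Subset S_\phi(x,h)$ is the right way out and is essentially how \cite{GH} proceeds. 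In short: your reduction is sound and adds genuine content beyond the paper's bare citation, but the substantive estimate you end up invoking is the same one the paper cites, so both approaches ultimately rest on \cite{GH}/\cite[Theorem~3.3.10]{G}.
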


We now  state a result about strong type $p-p$ estimates for the maximal function with respect to sections. 
For that, let us fix  $0<\alpha_0<1$ and  take $\eta_0=\eta_0(\alpha_0,n, \lambda,\Lambda)$ be the corresponding positive constant given  by Lemma~\ref{containment2}.

\begin{theorem}\label{strongtype}
Let $\mu :=M\phi$ and define
\begin{equation*}\label{maximalfunction}
\mathcal M_\mu (f)(x) :=\sup_{t\leq\eta_0/2}
\dfrac{1}{\mu(S_\phi(x,t))}\int_{S_\phi(x,t)}|f(y)|\, d\mu (y)\quad \forall x\in \Omega_{\alpha_0}.
\end{equation*}
Then for any $1<p<\infty$, there exists a constant $C$ depending on $p$, $n$, $\lambda$ and $\Lambda$ such that
\[
\left(\int_{\Omega_{\alpha_0}}{|\mathcal M_\mu (f)(x)|^p ~d\mu(x)} \right)^{\frac{1}{p}}\leq
C\, \left(\int_{\Omega}{|f(y)|^p \,d\mu(y)} \right)^{\frac{1}{p}}.
\]
\end{theorem}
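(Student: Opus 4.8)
The plan is to exploit the fact that the family of sections $\{S_\phi(x,t)\}$, together with the Monge--Amp\`ere measure $\mu=M\phi$, forms a space of homogeneous type in which sections play the role of Euclidean balls, and then to follow the classical route for maximal operators: a weak type $(1,1)$ inequality, the trivial $L^\infty$ bound, and Marcinkiewicz interpolation. Two structural properties of Caffarelli and Guti\'errez \cite{CG1,CG2} underlie this. First, the \emph{engulfing property}: there is $\theta=\theta(n,\lambda,\Lambda)>1$ such that $y\in S_\phi(x,t)$ implies $S_\phi(x,t)\subset S_\phi(y,\theta t)$, from which one derives a quasi-triangle inequality for the family. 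Second, $\mu$ is \emph{doubling with respect to sections}: $\mu(S_\phi(x,2t))\le C_d\,\mu(S_\phi(x,t))$ with $C_d=C_d(n,\lambda,\Lambda)$, for sections compactly contained in $\Omega$. Since $\mathcal M_\mu$ only involves centers $x\in\Omega_{\alpha_0}$ and heights $t\le\eta_0/2$, Lemma~\ref{containment2} ensures that all such sections and any fixed number of successive $\theta$-dilations of them lie in a single $\Omega_{\alpha_1}\Subset\Omega$ with $\alpha_0<\alpha_1<1$; on this region the two properties hold with constants depending only on $n,\lambda,\Lambda$.

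With these tools in hand, the first step is a Vitali-type covering lemma: from any family $\{S_\phi(x_\alpha,t_\alpha)\}$ with centers in $\Omega_{\alpha_0}$ and $\sup_\alpha t_\alpha\le\eta_0/2$, one extracts a countable pairwise disjoint subfamily $\{S_\phi(x_j,t_j)\}$ such that $\bigcup_\alpha S_\phi(x_\alpha,t_\alpha)\subset\bigcup_j S_\phi(x_j,Kt_j)$ for a constant $K$ depending only on $\theta$; this is the standard greedy selection by near-maximal height, and the inclusion is exactly the engulfing/quasi-triangle inequality. The weak $(1,1)$ estimate then proceeds as in the Euclidean case: for $\sigma>0$ set $E_\sigma=\{x\in\Omega_{\alpha_0}:\mathcal M_\mu f(x)>\sigma\}$; each $x\in E_\sigma$ lies in a section $S_\phi(x,t_x)$, $t_x\le\eta_0/2$, on which the $\mu$-average of $|f|$ exceeds $\sigma$; applying the covering lemma to these sections and iterating the doubling property a bounded number of times to absorb the dilation factor $K$ yields
\[
\mu(E_\sigma)\le\sum_j\mu(S_\phi(x_j,Kt_j))\le C\sum_j\mu(S_\phi(x_j,t_j))\le\frac{C}{\sigma}\sum_j\int_{S_\phi(x_j,t_j)}|f|\,d\mu\le\frac{C}{\sigma}\int_\Omega|f|\,d\mu,
\]
the last inequality using that the selected sections are disjoint subsets of $\Omega$. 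Thus $\mathcal M_\mu$ is of weak type $(1,1)$ with a constant depending only on $n,\lambda,\Lambda$.

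Finally, the $L^\infty$ bound is immediate, $\|\mathcal M_\mu f\|_{L^\infty(\Omega_{\alpha_0},\mu)}\le\|f\|_{L^\infty(\Omega,\mu)}$, and $\mathcal M_\mu$ is sublinear, so the Marcinkiewicz interpolation theorem on the measure space $(\Omega,\mu)$ gives the strong type $(p,p)$ bound for every $1<p<\infty$ with a constant depending only on $p,n,\lambda,\Lambda$. The one point requiring care --- and the only genuine obstacle --- is to confirm that the localized form of $\mathcal M_\mu$ (the restrictions $x\in\Omega_{\alpha_0}$ and $t\le\eta_0/2$) does not disrupt the covering argument: one must know that the dilated sections $S_\phi(x_j,Kt_j)$ arising in the selection still lie where doubling holds uniformly, which is precisely what the containment in Lemma~\ref{containment2} guarantees once $\eta_0$ is chosen, as in the statement, according to $\alpha_0$. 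Everything else is a transcription of the classical Hardy--Littlewood argument to the homogeneous-type structure provided by \cite{CG1,CG2}.
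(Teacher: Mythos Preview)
Your proposal is correct and follows essentially the same route as the paper: weak type $(1,1)$ for $\mathcal M_\mu$, the trivial $L^\infty$ bound, and Marcinkiewicz interpolation. The only difference is one of presentation: the paper simply cites \cite{CG1} and \cite[Theorem~2.9]{GT} for the weak $(1,1)$ inequality, whereas you sketch its proof via the engulfing and doubling properties of sections and a Vitali-type selection; your care in invoking Lemma~\ref{containment2} to keep the dilated sections inside a fixed $\Omega_{\alpha_1}\Subset\Omega$ is exactly the localization issue those references handle.
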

Notice that it is known from \cite{CG1} and \cite[Theorem~2.9]{GT} that $\mathcal M_\mu$ is of weak type $1-1$. This together with the obvious inequality $\|\mathcal M_\mu(f)\|_{L^\infty(\Omega_{\alpha_0})}\leq \|f\|_{L^\infty(\Omega)}$ and the Marcinkiewicz interpolation 
lemma (see Theorem~5 in \cite[Page~21]{St})  yields the strong type $p-p$ estimate in Theorem~\ref{strongtype}.
The next lemma is a slight modification of \cite[Lemma~6.2.1]{G}.
\begin{lemma}\label{lm:belowimplyabove}
There exist $c=c(n,\lambda,\Lambda)>0$ and $\delta_0= \delta_0(\alpha_0,n,\lambda,\Lambda)>0$ such that
if $x_0\in \Omega_{\alpha_0}$ and
$\phi(x)\geq
\phi(x_0) + \nabla \phi(x_0)\cdot (x-x_0) +\sigma\, |x-x_0|^2\quad\forall x\in \Omega$,
then 
\[\phi(x)\leq
\phi(x_0) + \nabla \phi(x_0)\cdot (x-x_0) +\frac{1}{c^2 \sigma^{n-1}}\, |x-x_0|^2
\quad \mbox{for all}\quad |x-x_0|\leq \delta_0.
\]
\end{lemma}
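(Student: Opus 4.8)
The plan is to reduce to the normalized picture and then convert the quantitative lower bound on $\phi$ at $x_0$ into a geometric statement about the sections $S_\phi(x_0,t)$. Replacing $\phi$ by $\tilde\phi(x):=\phi(x)-\phi(x_0)-\nabla\phi(x_0)\cdot(x-x_0)$ changes neither $M\phi$ nor $S_\phi(x_0,t)$, and after a translation I may assume $x_0=0$, $\tilde\phi(0)=0$, $\nabla\tilde\phi(0)=0$; the hypothesis then reads $\tilde\phi(x)\ge\sigma|x|^2$ on $\Omega$ and the goal is $\tilde\phi(x)\le c^{-2}\sigma^{-(n-1)}|x|^2$ for $|x|\le\delta_0$. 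I would record two preliminary facts, with constants depending only on $\alpha_0,n,\lambda,\Lambda$: (i) by Caffarelli's interior $C^{1,\alpha}$ estimate \cite{C4} there are $\delta_1>0$, $C_*>0$, $\alpha\in(0,1)$ with $|\nabla\phi(x)-\nabla\phi(x_0)|\le C_*|x-x_0|^{\alpha}$ on $B_{\delta_1}(x_0)$, hence $\tilde\phi(x)\le C_*|x-x_0|^{1+\alpha}$ there; (ii) by Lemma~\ref{containment2}, $S_\phi(0,t)\Subset\Omega$ for all $t\le\eta_0$, and by the classical structure theory for such sections (John's normalization together with the uniform interiority of the section center, via Aleksandrov's estimate; see \cite[Chapter~3]{G}) there is, for each $t\le\eta_0$, an ellipsoid $\mathcal E_t$ \emph{centered at $0$} with $\mathcal E_t\subset S_\phi(0,t)$ and $|\mathcal E_t|\ge c_1\,t^{n/2}$, $c_1=c_1(n,\lambda,\Lambda)>0$.

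The heart of the argument is the inclusion
\[
B_{c\,\sigma^{(n-1)/2}\sqrt{t}}(0)\ \subset\ S_\phi(0,t)\qquad\text{for all }t\le\eta_0,
\]
with $c=c(n,\lambda,\Lambda)>0$. Indeed, fix $t\le\eta_0$. From $\tilde\phi\ge\sigma|x|^2$ we get $S_\phi(0,t)\subset B_{\sqrt{t/\sigma}}(0)$, hence the ellipsoid $\mathcal E_t$ of fact (ii) satisfies $\mathcal E_t\subset B_{\sqrt{t/\sigma}}(0)$. Writing $\beta_1\le\cdots\le\beta_n$ for its semi-axes, the containment in the ball gives $\beta_n\le\sqrt{t/\sigma}$, while $\omega_n\,\beta_1\cdots\beta_n=|\mathcal E_t|\ge c_1\,t^{n/2}$, so
\[
\beta_1\ \ge\ \frac{\beta_1\cdots\beta_n}{\beta_2\cdots\beta_n}\ \ge\ \frac{(c_1/\omega_n)\,t^{n/2}}{(t/\sigma)^{(n-1)/2}}\ =\ c\,\sigma^{(n-1)/2}\sqrt{t},\qquad c:=c_1/\omega_n,
\]
and therefore $B_{c\,\sigma^{(n-1)/2}\sqrt{t}}(0)\subset B_{\beta_1}(0)\subset\mathcal E_t\subset S_\phi(0,t)$. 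In particular $\tilde\phi<t$ on that open ball.

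It remains to glue the two estimates. Put $\delta_0:=\min\{\delta_1,(\eta_0/C_*)^{1/(1+\alpha)}\}$, which depends only on $\alpha_0,n,\lambda,\Lambda$. Given $x$ with $|x|=r\le\delta_0$, set $t:=r^2/(c^2\sigma^{n-1})$, so that $r=c\,\sigma^{(n-1)/2}\sqrt{t}$. If $t\le\eta_0$, the inclusion above gives $B_r(0)\subset S_\phi(0,t)=\{\tilde\phi<t\}$, hence $\tilde\phi(x)\le t=c^{-2}\sigma^{-(n-1)}|x|^2$ by continuity of $\tilde\phi$. If instead $t>\eta_0$, then $c^{-2}\sigma^{-(n-1)}|x|^2=t>\eta_0\ge C_*\delta_0^{1+\alpha}\ge C_*r^{1+\alpha}\ge\tilde\phi(x)$ by the choice of $\delta_0$ and fact (i). In both cases $\tilde\phi(x)\le c^{-2}\sigma^{-(n-1)}|x|^2$, which is the assertion.

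The main difficulty I expect is purely in the bookkeeping of constants: one must check that $c$ depends only on $n,\lambda,\Lambda$ and, more delicately, that $\delta_0$ is \emph{independent of $\sigma$}. It is precisely for the latter that the crude bound (i) is indispensable — the section inclusion only controls $\tilde\phi$ on a ball of radius of order $\sigma^{(n-1)/2}\sqrt{\eta_0}$, which degenerates as $\sigma\to0$, and on the complementary range the elementary inequality $C_*r^{1+\alpha}\le\eta_0<c^{-2}\sigma^{-(n-1)}r^2$ takes over. The only non-elementary ingredient is fact (ii), namely that a section $S_\phi(0,t)$ contains an ellipsoid centered at its own center with volume comparable to $t^{n/2}$; this is part of the classical Caffarelli–Gutiérrez theory of sections and may be quoted from \cite[Chapter~3]{G}.
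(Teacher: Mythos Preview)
Your proof is correct and follows essentially the same line as the paper's. The ellipsoid argument you spell out (fact~(ii) combined with the upper bound $S_\phi(0,t)\subset B_{\sqrt{t/\sigma}}$) is exactly the content of \cite[Lemma~6.2.1]{G}, which the paper simply cites; for the gluing step the paper uses the cruder linear bound $u(x)\le C|x-x_0|$ coming from the elementary interior gradient estimate for convex functions \cite[Lemma~3.2.1]{G} instead of the $C^{1,\alpha}$ estimate you invoke in fact~(i), but the role is identical --- to force $u(x)\le\eta_0$ on a ball of $\sigma$-independent radius.
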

\begin{proof}
Let $u(x) := \phi(x)-
\phi(x_0)- \nabla \phi(x_0)\cdot (x-x_0)$. Then by the proof of Lemma~6.2.1 in \cite{G},  we have $u(x)\leq C(n,\lambda,\Lambda)\sigma^{-n+1}\, |x-x_0|^2$ for all $x\in\Omega$ satisfying $u(x)\leq \eta_0$. Next it follows from Aleksandrov's maximum principle and \cite[Proposition~3.2.3]{G} that
$\dist(\Omega_{\alpha_0}, \partial \Omega)\geq c(n,\lambda,\Lambda) (1-\alpha_0)^n =:d_{\alpha_0}$. Moreover if $\dist(x,\partial\Omega)\geq d_{\alpha_0}/2$, then  by using \cite[Lemma~3.2.1]{G} we obtain
\[
u(x) \leq |\nabla \phi(\xi) -\nabla\phi(x_0)| \, |x-x_0| \leq \frac{ C(n,\lambda,\Lambda)}{d_{\alpha_0}} |x-x_0|
\]
where $\xi$ is some point on the segment joining $x_0$ and $x$.
Therefore there exists $\delta_0>0$ depending only  on $\alpha_0$, $n$, $\lambda$ and $\Lambda$ such that $u(x)\leq \eta_0$ whenever $|x-x_0|\leq \delta_0$.
\end{proof}
The above  lemma together with Lemma~6.2.2 in \cite{G} gives:

\begin{lemma}\label{lm:excentricityofGM}
Given  $0<\alpha\leq \alpha_0 $ and  $\gamma>0$,  we define
\begin{equation}\label{def:newexcentricity}
D^\alpha_\gamma = \Big\{x\in \Omega_{\alpha}: S_\phi(x,t)\subset B_{\gamma\sqrt{t}}(x), \quad \forall t\leq \eta_0 \Big\}.
\end{equation}
Then there exist $c=c(n,\lambda,\Lambda)>0$ and $\delta_0= \delta_0(\alpha_0,n,\lambda,\Lambda)>0$ such that
for any $\gamma>0$ satisfying  $(c\gamma)^{\frac{1}{n-1}}\geq \dfrac{\diam(\Omega)}{\sqrt{\eta_0}}$, we have: 
 if $\bar x\in D^{\alpha}_{(c\gamma)^\frac{1}{n-1}}$ then 
\[\phi(x)
-\phi(\bar x) -
\nabla \phi(\bar x)\cdot (x-\bar x) \leq \gamma^2 |x-\bar
x|^2\quad \text{for all}\quad  |x-\bar x|\leq \delta_0.
\]
\end{lemma}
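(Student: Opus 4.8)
\emph{Proof idea.} The plan is to combine the hypothesis $\bar x\in D^{\alpha}_{(c\gamma)^{1/(n-1)}}$ with Lemma~\ref{lm:belowimplyabove}, taking $c$ and $\delta_0$ to be the constants supplied by that lemma. Write $\Gamma:=(c\gamma)^{\frac{1}{n-1}}$, so that the assumption on $\gamma$ becomes $\Gamma\sqrt{\eta_0}\ge\diam(\Omega)$; moreover $D^\alpha_\Gamma\subset\Omega_\alpha\subset\Omega_{\alpha_0}$ since $\alpha\le\alpha_0$, so Lemma~\ref{lm:belowimplyabove} is applicable at $\bar x$. First I would show — this is essentially Lemma~6.2.2 in \cite{G} — that the containment of sections in balls yields a \emph{global} lower paraboloid bound: for $x\in\Omega$ set $t_x:=|x-\bar x|^2/\Gamma^2$; since $|x-\bar x|\le\diam(\Omega)\le\Gamma\sqrt{\eta_0}$ we get $t_x\le\eta_0$, so the definition of $D^\alpha_\Gamma$ gives $S_\phi(\bar x,t_x)\subset B_{\Gamma\sqrt{t_x}}(\bar x)=B_{|x-\bar x|}(\bar x)$; as $x\in\Omega$ but $x\notin B_{|x-\bar x|}(\bar x)$, necessarily $x\notin S_\phi(\bar x,t_x)$, i.e.
\begin{equation*}
\phi(x)\ \ge\ \phi(\bar x)+\nabla\phi(\bar x)\cdot(x-\bar x)+\Gamma^{-2}\,|x-\bar x|^2\qquad\text{for every }x\in\Omega .
\end{equation*}

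Next I would feed this into Lemma~\ref{lm:belowimplyabove} with $x_0=\bar x$ and $\sigma=\Gamma^{-2}$, obtaining, for $|x-\bar x|\le\delta_0$,
\begin{equation*}
\phi(x)\ \le\ \phi(\bar x)+\nabla\phi(\bar x)\cdot(x-\bar x)+\frac{1}{c^2\sigma^{n-1}}\,|x-\bar x|^2 .
\end{equation*}
It then only remains to observe that the exponent $\tfrac{1}{n-1}$ in the definition of $\Gamma$ was chosen exactly so that this opening equals $\gamma^2$: indeed $\dfrac{1}{c^2\sigma^{n-1}}=\dfrac{\Gamma^{2(n-1)}}{c^2}=\dfrac{(c\gamma)^2}{c^2}=\gamma^2$, which is precisely the asserted inequality.

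The only point that requires a little care is the passage to the \emph{global} lower bound in the first step: a priori the containment $S_\phi(\bar x,t)\subset B_{\Gamma\sqrt t}(\bar x)$ is assumed only for heights $t\le\eta_0$, whereas Lemma~\ref{lm:belowimplyabove} needs the lower paraboloid bound at every point of $\Omega$. The normalization hypothesis $(c\gamma)^{1/(n-1)}\ge\diam(\Omega)/\sqrt{\eta_0}$ is exactly what keeps each relevant height $t_x=|x-\bar x|^2/\Gamma^2$ below $\eta_0$, so no point of $\Omega$ is missed. The matching of constants — the power-law loss $\sigma\mapsto c^{-2}\sigma^{-(n-1)}$ in Lemma~\ref{lm:belowimplyabove} being inverted by the choice of exponent in the definition of $D^\alpha_\gamma$ — is the only other thing to get right; everything else is elementary bookkeeping.
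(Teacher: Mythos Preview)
Your proof is correct and follows exactly the same approach as the paper: the paper also takes $c,\delta_0$ from Lemma~\ref{lm:belowimplyabove}, invokes \cite[Lemma~6.2.2]{G} to pass from the section-in-ball containment to the global lower paraboloid bound $\bar x\in A_{(c\gamma)^{-2/(n-1)}}$, and then applies Lemma~\ref{lm:belowimplyabove}. The only difference is cosmetic---you unpack the short argument behind \cite[Lemma~6.2.2]{G} explicitly (the $t_x=|x-\bar x|^2/\Gamma^2$ step), whereas the paper just cites it.
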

\begin{proof} Let $c$ and $\delta_0$ be given by Lemma~\ref{lm:belowimplyabove}, and take  $\bar x\in D^{\alpha}_{(c\gamma)^\frac{1}{n-1}}$. Since  $(c\gamma)^{\frac{1}{n-1}}\geq \frac{\diam(\Omega)}{\sqrt{\eta_0}}$, 
we then have by \cite[Lemma~6.2.2]{G} that $\bar x\in \Omega_{\alpha}\cap A_{(c\gamma)^{\frac{-2}{n-1}}}$,  where
\[
A_\sigma := \left\{x_0\in \Omega: \phi(x)\geq
\phi(x_0) + \nabla \phi(x_0)\cdot (x-x_0) + \sigma\, |x-x_0|^2, \quad \forall x\in \Omega\right\}.
\]
Therefore  the conclusion of the lemma follows from Lemma~\ref{lm:belowimplyabove}.
\end{proof}

\subsection{Tangent paraboloids and power decay for the Monge-Amp\`ere equation}
In this subsection we recall the quasi distance given by the convex function
 $\phi$ and then use it to define the sets where
the solution $u$ is touched from above and below by certain functions involving this quasi distance.

\begin{definition}\label{def:distance}
Let $\Omega$ be a bounded convex set in $\R^n$ and $\phi\in
C^1(\Omega)$ be a convex function.
For any  $x\in \Omega$ and $x_0\in \Omega$, we define $d(x, x_0)$ by
\begin{equation*}
d(x,x_0)^2 := \phi(x) - \phi(x_0)
-\nabla \phi(x_0)\cdot (x-x_0).
\end{equation*}
\end{definition}
Clearly $x\mapsto d(x,x_0)^2$ is a convex function on $\Omega$. Since $d(x,x_0)^2$ is in general not equivalent to $|x-x_0|^2$, the following definition of "tangent paraboloids" has  a nature different from the standard definition of tangent paraboloids for uniformly elliptic equations (see \cite{CC}). It is however more suitable to exploit the degenerate structure of the solution $\phi$ to the Monge-Amp\`ere equation. 
\begin{definition}\label{def:GMs}
Let $\Omega$ and $\phi$ be as in Definition~\ref{def:distance}.
Then for  $u\in C(\Omega)$ and $M>0$, we define the sets
\begin{align*}
&G_M^+(u,\Omega)\\
&= \big\{\bar x\in \Omega: \text{$u$ is differentiable at $\bar x$ and } u(x)\leq u(\bar x) +
\nabla u(\bar x)\cdot (x-\bar x) +
M\, d(x,\bar x)^2\,\forall x\in \Omega\big\};\\
&G_M^-(u,\Omega)\\
 &= \big\{\bar x\in \Omega:  \text{$u$ is differentiable at $\bar x$ and } u(x)\geq u(\bar x) +
\nabla u(\bar x)\cdot (x-\bar x) -
M\, d(x,\bar x)^2\, \forall x\in \Omega\big\};
\end{align*}
and
$G_M(u,\Omega) :=G_M^+(u,\Omega) \cap G_M^-(u,\Omega)$. 
\end{definition}
We note that local versions of Definition~\ref{def:distance} and Definition~\ref{def:GMs} were introduced in \cite{GT}. However, these definitions are not good enough for the purposes of this paper. The next observation is our starting point for deriving $L^p$-estimates  for second derivatives of solutions to the linearized Monge-Amp\`ere equation.

\begin{lemma}\label{distribution-function}
Assume condition $(\bH)$ holds. Let $0<\alpha\leq \alpha_0$, $u\in C(\Omega)$ and
\begin{equation}\label{function-theta}
\Theta(u)(x) := \frac{1}{2}\, \Theta(u,B_{\delta_0}(x))(x) \quad\mbox{for}\quad x\in \Omega_{\alpha_0}
\end{equation}
where $\delta_0$ is given by Lemma~\ref{lm:excentricityofGM} and  $\Theta(u,B_{\delta_0}(x))(x)$ is defined exactly as in \cite[Section~1.2]{CC}. Then for $\kappa>1$, we have
\begin{align}\label{eq:distributionsetofsecondderivatives}
\{x\in \Omega_\alpha: \Theta(u)(x) > \beta^\kappa\} 
\subset \big(\Omega_\alpha \setminus D^{\alpha}_{(c\beta^{\frac{\kappa-1}{2}})^{\frac{1}{n-1}}}\big) \cup \big(\Omega_\alpha
\setminus G_\beta(u,\Omega)\big)
\end{align}
for any $\beta>0$ satisfying $(c\beta^{\frac{\kappa -1}{2}})^{\frac{1}{n-1}}\geq \dfrac{\diam(\Omega)}{\sqrt{\eta_0}}$ with $c=c(n,\lambda,\Lambda)$ is as in Lemma~\ref{lm:excentricityofGM}.
\end{lemma}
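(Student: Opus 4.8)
The plan is to unravel the definitions on both sides of \eqref{eq:distributionsetofsecondderivatives} and show that a point $\bar x$ lying in \emph{both} of the sets $D^{\alpha}_{(c\beta^{(\kappa-1)/2})^{1/(n-1)}}$ and $G_\beta(u,\Omega)$ must satisfy $\Theta(u)(\bar x)\le \beta^\kappa$; taking complements then yields the stated inclusion. So suppose $\bar x\in \Omega_\alpha$ belongs to both sets. The membership $\bar x\in G_\beta(u,\Omega)$ means there is an affine function $\ell(x)=u(\bar x)+\nabla u(\bar x)\cdot(x-\bar x)$ with
\[
\ell(x)-\beta\, d(x,\bar x)^2 \le u(x) \le \ell(x)+\beta\, d(x,\bar x)^2 \qquad \text{for all } x\in\Omega.
\]
The membership $\bar x\in D^{\alpha}_{(c\beta^{(\kappa-1)/2})^{1/(n-1)}}$ lets us invoke Lemma~\ref{lm:excentricityofGM} with $\gamma = \beta^{(\kappa-1)/2}$ (the hypothesis $(c\beta^{(\kappa-1)/2})^{1/(n-1)}\ge \diam(\Omega)/\sqrt{\eta_0}$ is exactly the standing assumption on $\beta$), which gives
\[
d(x,\bar x)^2 = \phi(x)-\phi(\bar x)-\nabla\phi(\bar x)\cdot(x-\bar x) \le \beta^{\kappa-1}\,|x-\bar x|^2 \qquad \text{for all } |x-\bar x|\le \delta_0.
\]

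Combining the two displays, for every $x$ with $|x-\bar x|\le\delta_0$ we obtain
\[
\ell(x)-\beta\cdot\beta^{\kappa-1}|x-\bar x|^2 \;\le\; u(x) \;\le\; \ell(x)+\beta\cdot\beta^{\kappa-1}|x-\bar x|^2,
\]
that is, $u$ is touched from above and below at $\bar x$, on the Euclidean ball $B_{\delta_0}(\bar x)$, by the paraboloids $\ell(x)\pm \beta^{\kappa}|x-\bar x|^2$. By the definition of $\Theta(u,B_{\delta_0}(\bar x))(\bar x)$ in \cite[Section~1.2]{CC} — which is the infimum of the opening constants $M$ for which such a pair of touching paraboloids exists on that ball — this forces $\Theta(u,B_{\delta_0}(\bar x))(\bar x)\le 2\beta^\kappa$, hence $\Theta(u)(\bar x)=\tfrac12\Theta(u,B_{\delta_0}(\bar x))(\bar x)\le \beta^\kappa$ by \eqref{function-theta}. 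Thus $\bar x\notin\{\Theta(u)>\beta^\kappa\}$, which is the contrapositive of what we wanted, and the inclusion \eqref{eq:distributionsetofsecondderivatives} follows by De Morgan's law.

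The only genuinely delicate point is bookkeeping with the two different "scales" of paraboloid: the sets $G_M(u,\Omega)$ are defined using the quasi-distance $d(\cdot,\bar x)^2$, whereas $\Theta$ in \cite{CC} is defined via the Euclidean paraboloids $|x-\bar x|^2$; the role of Lemma~\ref{lm:excentricityofGM} (and, upstream, Lemma~\ref{lm:belowimplyabove}) is precisely to convert between them at the cost of a factor $\beta^{\kappa-1}$, and one must check that the hypotheses of that lemma — in particular $\bar x\in D^\alpha_{(c\gamma)^{1/(n-1)}}$ with the correct $\gamma$, and the lower bound on $(c\gamma)^{1/(n-1)}$ — line up exactly with what is assumed in the statement. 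I also need to be mildly careful that the comparison with paraboloids is only required on $B_{\delta_0}(\bar x)$, which is exactly the ball appearing in \eqref{function-theta}, so no issue arises from the global (over all of $\Omega$) nature of the definition of $G_\beta$; the global bound is simply stronger than what is used. Everything else is a direct substitution.
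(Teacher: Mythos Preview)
Your proof is correct and follows essentially the same approach as the paper: both show the contrapositive by taking $\bar x\in D^{\alpha}_{(c\gamma)^{1/(n-1)}}\cap G_\beta(u,\Omega)$ with $\gamma=\beta^{(\kappa-1)/2}$, using Lemma~\ref{lm:excentricityofGM} to convert the quasi-distance bound $d(x,\bar x)^2$ into the Euclidean bound $\gamma^2|x-\bar x|^2$ on $B_{\delta_0}(\bar x)$, and concluding $\Theta(u,B_{\delta_0}(\bar x))(\bar x)\le 2\beta^\kappa$. The bookkeeping you flag is exactly what the paper does, and your handling of the factor of $2$ matches theirs.
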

\begin{proof}
Let $\gamma := \beta^{\frac{\kappa -1}{2}}$.
If $\bar x\in D^{\alpha}_{(c\gamma)^\frac{1}{n-1}} \cap G_\beta(u,\Omega)$, then
\[
-\beta \,d(x,\bar x)^2 \leq u(x)-u(\bar x) -\nabla u(\bar x)\cdot
(x-\bar x) \leq \beta \,d(x,\bar x)^2
\]
for each $x\in \Omega$. Since $\bar x\in
D^{\alpha}_{(c\gamma)^\frac{1}{n-1}}$, this together with  Lemma~\ref{lm:excentricityofGM} yields
\[
-\beta \gamma^2 |x-\bar x|^2 \leq u(x)-u(\bar x) -\nabla u(\bar x)\cdot
(x-\bar x) \leq \beta \gamma^2 |x-\bar x|^2
\]
for all $| x- \bar x|\leq \delta_0$, and so
$\Theta(u,B_{\delta_0}(x))(\bar x)\leq 2 \beta \gamma^2 = 2 \beta^{\kappa}$. Thus we have proved that
\[
D^{\alpha}_{(c\gamma)^\frac{1}{n-1}} \cap G_\beta(u,\Omega) \subset \{x\in \Omega_{\alpha}:
\Theta(u)(x) \leq \beta^\kappa\}
\]
and the lemma follows by taking complements.
\end{proof}

In order to derive interior $W^{2,p}$ estimates for solutions $u$  to the linearized Monge-Amp\`ere equation, we will  need to estimate the distribution function     
$F(\beta) :=| \{x\in \Omega_\alpha: \Theta(u)(x) > \beta^\kappa\} |$ for some suitable choice of $\kappa>1$. It follows from Lemma~\ref{distribution-function} that this can be done if one can get  appropriate fall off estimates for 
$F_1(\beta) := |\Omega_\alpha \setminus D^{\alpha}_{(c\beta^{\frac{\kappa -1}{2}})^\frac{1}{n-1}}|$ and $F_2(\beta) := |\Omega_\alpha
\setminus G_\beta(u,\Omega)|$ when $\beta$ is large. Notice  that since the function $F_1(\beta)$ involves only the solution $\phi$ of the Monge-Amp\`ere equation, its decay estimate has been established by Caffarelli in the fundamental work \cite{C3}. We reformulate his estimate  in the following theorem.

\begin{theorem}\label{thm:powerdecayMA}  Let $\Omega$ be a normalized convex domain and $\phi\in C(\overline{\Omega})$ be a convex function satisfying  $1-\epsilon\leq \det D^2\phi\leq 1+\epsilon$ in $\Omega$ and  $\phi=0$ on $\partial\Omega$, where  $0<\epsilon < 1/2$.
Then for any $0<\alpha<1$, there exists a positive constant $M$
depending only on $\alpha$ and $n$ such that
\begin{align}\label{eq:exponentialdecayofthesetD}
|\Omega_{\alpha}\setminus D^{\alpha}_{s}|
\leq \frac{|\Omega|}{(C_n\epsilon)^2}\quad s^{\dfrac{\ln \sqrt{C_n \epsilon}}{\ln
M}}\quad \mbox{ for all}\quad s\geq M.
\end{align}
\end{theorem}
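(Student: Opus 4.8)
The plan is to reformulate Caffarelli's interior $W^{2,p}$ estimate for the Monge-Amp\`ere equation \cite{C3} (see also \cite[Chapter~6]{G}) in terms of the eccentricity sets $D^\alpha_s$, exploiting the fact that the determinant is pinched between $1-\epsilon$ and $1+\epsilon$ rather than merely between $\lambda$ and $\Lambda$. The key quantitative point is that when $\det D^2\phi$ is within $\epsilon$ of $1$, the strict convexity and $C^{1,1}$ estimates for $\phi$ degrade only at a rate controlled by a power of $C_n\epsilon$, which is precisely what produces the exponent $\frac{\ln\sqrt{C_n\epsilon}}{\ln M}$ in \eqref{eq:exponentialdecayofthesetD}.

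First I would recall the dichotomy underlying Caffarelli's argument: there is a universal constant $M=M(\alpha,n)$ and a universal ``improvement'' step so that if $\bar x\in\Omega_\alpha$ fails to be in $D^\alpha_s$ for some $s$, then on a definite portion of a section around $\bar x$ the function $\phi$ cannot be too well approximated by a paraboloid at scale $s$, and one can iterate: the set $\Omega_\alpha\setminus D^\alpha_{M^{k+1}}$ is contained, up to the good covering, in a fixed fraction of $\Omega_\alpha\setminus D^\alpha_{M^k}$ together with an error governed by $\|\det D^2\phi-1\|_\infty\le\epsilon$. Concretely, the covering/Vitali machinery for sections (the ``$A\Rightarrow B$'' mechanism in \cite[Section~6.3--6.4]{G}) gives an inclusion of the form $|\Omega_\alpha\setminus D^\alpha_{M s}|\le (C_n\epsilon)\,|\Omega_\alpha\setminus D^\alpha_{s}| + (\text{lower order})$, and then a standard induction on $k$ with $s=M^k$ yields $|\Omega_\alpha\setminus D^\alpha_{M^k}|\le |\Omega|\,(C_n\epsilon)^{k}$. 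Writing a general $s\ge M$ as $s\in[M^k,M^{k+1})$, so that $k\ge \frac{\ln s}{\ln M}-1$, this gives
\[
|\Omega_\alpha\setminus D^\alpha_s|\le |\Omega|\,(C_n\epsilon)^{\frac{\ln s}{\ln M}-1}
=\frac{|\Omega|}{C_n\epsilon}\,s^{\frac{\ln(C_n\epsilon)}{\ln M}},
\]
and absorbing the extra loss from the monotonicity step $D^\alpha_{s}\subset D^\alpha_{\sqrt{s}}$-type comparison (needed because \eqref{def:newexcentricity} uses $S_\phi(x,t)\subset B_{\gamma\sqrt t}(x)$, i.e.\ the natural scaling variable is $\sqrt s$) produces the stated exponent $\frac{\ln\sqrt{C_n\epsilon}}{\ln M}$ and the squared constant $(C_n\epsilon)^{-2}$.

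The step I expect to be the main obstacle is making the per-iteration contraction factor explicitly equal to $C_n\epsilon$ rather than a generic universal constant $<1$. In Caffarelli's original argument the relevant renormalized sections converge to a solution of $\det D^2\phi_\infty=1$ by compactness, and the contraction constant is obtained qualitatively; here one must instead track the dependence on $\epsilon$ quantitatively. The fix is to use the explicit Alexandrov-type estimates (the maximum principle together with \cite[Proposition~3.2.3]{G} and \cite[Lemma~3.2.1]{G}, as already invoked in the proof of Lemma~\ref{lm:belowimplyabove}) to show that a section on which $\phi$ deviates from its tangent paraboloid has Monge-Amp\`ere mass forcing a measure bound proportional to $\epsilon$ times the measure of the section, which after the covering lemma for sections \cite{CG1,CG2} transfers to Lebesgue measure with a dimensional constant $C_n$. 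Once the contraction constant $C_n\epsilon$ is pinned down, the remaining induction and the bookkeeping of the two constant losses (one from $s\mapsto\sqrt s$ rescaling, one from rounding $s$ to a power of $M$) are routine and give exactly \eqref{eq:exponentialdecayofthesetD}. Since the theorem is explicitly stated as a reformulation of \cite{C3}, in the write-up I would keep the reduction to \cite[Chapter~6]{G} front and center and only detail the $\epsilon$-tracking.
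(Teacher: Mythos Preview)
Your overall plan---iterate a one-step contraction from Caffarelli's argument and read off the exponent---is the right one, but two concrete points are off.

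First, the per-iteration contraction factor is $\sqrt{C_n\epsilon}$, not $C_n\epsilon$. This is exactly the content of \cite[Theorem~6.3.2]{G}: for $\lambda\ge M$ one has
\[
|\Omega_{\alpha'}\setminus D^{\alpha'}_{M\lambda}|\le \sqrt{C_n\epsilon}\,|\Omega_{\alpha}\setminus D^{\alpha}_{\lambda}|,\qquad \alpha'=\alpha-(M\lambda)^{-p_0}.
\]
The square root is produced by the covering theorem \cite[Theorem~6.3.3]{G} (density $\delta$ on each section yields a global factor $\sqrt\delta$), not by any ``$s\mapsto\sqrt s$'' rescaling. Your attempt to recover the square root a posteriori from the parabolic scaling $S_\phi(x,t)\subset B_{\gamma\sqrt t}(x)$ in the definition of $D^\alpha_s$ is not a valid step; that $\sqrt t$ has nothing to do with the exponent in \eqref{eq:exponentialdecayofthesetD}. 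Relatedly, no extra ``$\epsilon$-tracking via Alexandrov estimates'' is needed: the quantitative $\epsilon$-dependence is already built into \cite[Theorem~6.3.2]{G}, which you can cite as a black box.

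Second, you omit the domain-shrinking that the iteration forces. Each application of \cite[Theorem~6.3.2]{G} passes from $\Omega_\alpha$ to $\Omega_{\alpha'}$ with $\alpha'<\alpha$, so the paper starts at $\alpha_0=\tfrac{\alpha+1}{2}$, sets $\alpha_k=\alpha_{k-1}-M^{-(k+1)p_0}$, and chooses $M$ large (depending on $\alpha$) so that $\alpha_k\ge 2\alpha_0-1=\alpha$ for all $k$. Iterating $k$ times gives $|\Omega_\alpha\setminus D^\alpha_{M^k}|\le(\sqrt{C_n\epsilon})^{k-1}|\Omega|$; for general $s\in[M^k,M^{k+1}]$ one uses $k+1\ge\log_M s$ and the monotonicity $D^\alpha_{M^k}\subset D^\alpha_s$ to obtain \eqref{eq:exponentialdecayofthesetD}, the factor $(C_n\epsilon)^{-2}$ coming from the bookkeeping $(\sqrt{C_n\epsilon})^{k-1}\le (C_n\epsilon)^{-2}(\sqrt{C_n\epsilon})^{\,\log_M s}$ (with room to spare).
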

\begin{proof}
This theorem is obtained by iterating \cite[Theorem~6.3.2]{G}. Indeed,
let $\alpha_0 := \frac{\alpha+1}{2}$ and 
let $M=M(\alpha_0,n)$ and $p_0 = p_0( \alpha_0, n)$ be the positive constants given by that theorem. By taking if necessary an even bigger constant, we can assume that $M$ is large so that $\alpha_0 - \sum_{j=1}^{\infty}{M^{-(j+1)p_0}}\geq 2\alpha_0-1$ and the statement of \cite[Theorem~6.3.2]{G} holds for all $\lambda\geq M$. We then begin the iteration with $\lambda=M$ and let $\alpha_1=\alpha_0- M^{-2 p_0}$. We get from \cite[Theorem~6.3.2]{G} that
\[
|\Omega_{\alpha_1}\setminus D^{\alpha_1}_{M^2}|\leq \sqrt{C_n\epsilon}~ |\Omega_{\alpha_0}\setminus D^{\alpha_0}_{M}|.
\] 
If $\lambda=M^2$ and $\alpha_2=\alpha_1- M^{-3 p_0}$, then
\[
|\Omega_{\alpha_2}\setminus D^{\alpha_2}_{M^3}|\leq \sqrt{C_n\epsilon}~|\Omega_{\alpha_1}\setminus D^{\alpha_1}_{M^2}|\leq \left(\sqrt{C_n\epsilon}\right)^2~ |\Omega_{\alpha_0}\setminus D^{\alpha_0}_{M}|.
\] 
Continuing in this way we let $\alpha_{k-1}=\alpha_{k-2}- M^{-k p_0}=\alpha_0- \sum_{j=1}^{k-1}{M^{-(j+1)p_0}}$
and obtain
\[
|\Omega_{\alpha_{k-1}}\setminus D^{\alpha_{k-1}}_{M^k}|\leq \left(\sqrt{C_n\epsilon}\right)^{k-1}~ |\Omega_{\alpha_0}\setminus D^{\alpha_0}_{M}|.
\]
Since by our choice of $M$, $\alpha_{k-1}\geq \alpha_0 - \sum_{j=1}^{\infty}{M^{-(j+1)p_0}}\geq 2\alpha_0-1$, it is easy to see that 
$\Omega_{2\alpha_0-1}\setminus D^{2\alpha_0-1}_{M^k} \subset \Omega_{\alpha_{k-1}}\setminus D^{\alpha_{k-1}}_{M^k}$.
Therefore, we have
\[
|\Omega_{\alpha}\setminus D^{\alpha}_{M^k}| = |\Omega_{2\alpha_0 -1 }\setminus D^{2\alpha_0-1}_{M^k}|\leq \left(\sqrt{C_n\epsilon}\right)^{k-1}~ |\Omega_{\alpha_0}\setminus D^{\alpha_0}_{M}|\quad \mbox{for}\quad k=1,2,\dots
\]
Now for each $s\geq M$, let us pick $k$
such that $M^k\leq s \leq M^{k+1}$. Then $D^{\alpha}_{M^k}\subset
D^{\alpha}_{s}\subset D^{\alpha}_{M^{k+1}}$ and $k\leq \log_M s \leq k+1$. So
\begin{align*}
|\Omega_{\alpha}\setminus D^{\alpha}_{s}|
&\leq \dfrac{|\Omega_{\alpha_0}|}{(C_n\epsilon)^2}\,(\sqrt{C_n \epsilon})^{k+1} 
\leq \dfrac{|\Omega_{\alpha_0}|}
{(C_n\epsilon)^2}\,(\sqrt{C_n \epsilon})^{\log_M s}
 =\dfrac{|\Omega_{\alpha_0}|}{(C_n\epsilon)^2}\quad s^{\frac{\ln \sqrt{C_n \epsilon}}{\ln
M}}.\nonumber
\end{align*}

\end{proof}

\section{$L^\delta$ estimates for second derivatives}\label{sec:densitylemmas}
In this section we prove two density lemmas and then use them to prove
a small power decay of $\mu (\Omega_\alpha \setminus G_\beta(u,\Omega))$
for $\beta$ large. Observe  that the density estimates established in \cite{GT} are not good enough for our purpose since
a different definition of the sets $G_{\beta}(u)$ was introduced there.   In \cite[Definition 3.5]{GT} the "tangent paraboloid" is assumed to lie below or above $u$ in a specific neighborhood depending on $\beta$  of the touching point. Such definition is not invariant under normalization and so not suitable for the acceleration process we consider later in Section~\ref{sec:L^p-estimate}. In this paper, we employ a global definition, Definition~\ref{def:GMs}, and we are still able to obtain  similar estimates as in \cite{GT} by modifying their arguments. For clarity, in the next subsection we give complete proofs of these estimates that are technically  simpler than the ones in \cite{GT}.
The following   lemma  is an    extension of \cite[Lemma~3.1]{GT} which allows us to work with strong solutions in $W^{2,n}_{loc}(\Omega)$ instead of classical solutions.

\begin{lemma}\label{lm:estimateofintegral}
 Let $\Omega\subset \R^n$ be open and  $u, \phi \in W^{2,n}_{loc}(\Omega)$ be such that $\det D^2\phi(x)>0$ for almost every $x$ in $ \Omega$. Let $w=u+\phi$.
Then for any Borel set $E\subset\Omega$, we have
\begin{equation}\label{eq:estimateofintegral}
M w(E)\leq \frac{1}{n^n}\,
 \int_{E\cap \mathcal C} \left( \Big( \dfrac{\trace\big(\Phi(x) D^2u(x)\big)}{\det D^2\phi(x)}+n
\Big)^+ \right)^n \,\det D^2\phi(x)\, dx
\end{equation}
where $\Phi(x)$ is the matrix of cofactors of $D^2\phi(x)$ and  $\mathcal C :=\{x\in \Omega:
w(x)=\Gamma(w)(x)\}$ with  $\Gamma(w)$ is the convex envelope of
$w$ in $\Omega$. 
\end{lemma}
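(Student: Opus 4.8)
The plan is to reduce the estimate to a pointwise inequality on the contact set $\mathcal C$ via the classical area-formula characterization of the Monge--Amp\`ere measure of a convex function, together with the fact that $\Gamma(w)$ has a second-order Taylor expansion a.e.\ where it touches $w$. First I would recall that since $w=u+\phi\in W^{2,n}_{loc}(\Omega)$ and $n\geq 1$, by the Aleksandrov--Bakelman--Pucci machinery (or the Aleksandrov theorem combined with Sobolev embedding), the convex envelope $\Gamma(w)$ is differentiable, and on the contact set $\mathcal C=\{w=\Gamma(w)\}$ one has, for a.e.\ $x\in\mathcal C$, that $D^2 w(x)$ exists in the pointwise (Alexandrov) sense and $D^2 w(x)\geq 0$, while the subdifferential $\partial\Gamma(w)$ is contained in the image of the "normal map" of $w$ restricted to $\mathcal C$. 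Concretely, $\partial \Gamma(w)(E)\subset \nabla w(E\cap\mathcal C)$ for Borel $E$, and $|\partial\Gamma(w)(E)| = |\nabla w(E\cap\mathcal C)|$. Then the area formula applied to the Lipschitz (after truncation) map $\nabla w$ gives
\[
Mw(E)=|\partial\Gamma(w)(E)| \le \int_{E\cap\mathcal C}\det D^2 w(x)\,dx,
\]
where $D^2 w(x)=D^2u(x)+D^2\phi(x)$ a.e.

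The second step is the pointwise algebraic inequality. On $E\cap\mathcal C$ we have $\det D^2\phi(x)>0$ and $D^2 w(x)=D^2\phi(x)+D^2u(x)\geq 0$. Writing $A:=D^2\phi(x)$ (symmetric positive definite a.e., since $\det A>0$ and $A$ is a limit of positive definite Hessians—or simply restricting to the full-measure set where this holds) and $B:=D^2 u(x)$, the matrix $A+B$ is positive semidefinite, so by the AM--GM inequality for the eigenvalues of the positive semidefinite matrix $A^{-1/2}(A+B)A^{-1/2}$,
\[
\det(A+B)=\det A\,\det\!\big(I+A^{-1/2}BA^{-1/2}\big)\le \det A\left(\frac{\trace(I+A^{-1/2}BA^{-1/2})}{n}\right)^n=\det A\left(\frac{n+\trace(A^{-1}B)}{n}\right)^n.
\]
Since $\det(A+B)\geq 0$ the left side is nonnegative, so we may insert a positive part on the right, and noting $\trace(A^{-1}B)=\trace\big((\det A)^{-1}\Phi(x)B\big)=\dfrac{\trace(\Phi(x)D^2u(x))}{\det D^2\phi(x)}$ by definition of the cofactor matrix $\Phi=(\det A)A^{-1}$, we obtain exactly
\[
\det D^2 w(x)\le \frac{1}{n^n}\left(\Big(\frac{\trace(\Phi(x)D^2u(x))}{\det D^2\phi(x)}+n\Big)^{+}\right)^{n}\det D^2\phi(x)
\]
for a.e.\ $x\in\mathcal C$. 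Integrating over $E\cap\mathcal C$ and combining with the area-formula bound from the first step yields \eqref{eq:estimateofintegral}.

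The main obstacle I anticipate is the first step made rigorous in the $W^{2,n}_{loc}$ (strong solution) setting rather than for $C^2$ functions: one must justify that $\Gamma(w)$ admits a pointwise second-order expansion agreeing with $D^2 w$ a.e.\ on $\mathcal C$, and that the normal-mapping/area-formula identity $|\partial\Gamma(w)(E)|\le \int_{E\cap\mathcal C}\det D^2 w$ holds. This is where the hypothesis $u,\phi\in W^{2,n}_{loc}$ is used crucially: $n$ is precisely the integrability threshold ensuring (via the ABP estimate and approximation by smooth functions, cf.\ the measure-theoretic lemmas in \cite[Chapter~1]{CC} and \cite{C1}) that the contact set contributes no singular part to $Mw$. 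I would handle this by approximating $w$ by smooth convex-envelope arguments on compactly contained subdomains, applying the known $C^2$/classical version (i.e.\ \cite[Lemma~3.1]{GT}) to mollifications, and passing to the limit using the $W^{2,n}$ bound to control $\det D^2 w_\varepsilon$ in $L^1_{loc}$ and the weak-$*$ convergence of Monge--Amp\`ere measures. The remaining algebraic step is routine once the positive semidefiniteness of $D^2w(x)$ on the contact set is in hand.
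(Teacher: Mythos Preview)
Your proposal is correct and takes essentially the same route as the paper: both reduce to $Mw(E)\leq\int_{E\cap\mathcal C}\det D^2 w\,dx$ by smooth approximation in $W^{2,n}_{loc}$ together with weak convergence of Monge--Amp\`ere measures, and then apply the AM--GM inequality for the eigenvalues of $A^{-1/2}(A+B)A^{-1/2}$ pointwise on $\mathcal C$ (the latter being exactly the calculation the paper defers to \cite[Lemma~3.1]{GT}). The only organizational difference is that the paper first proves $Mw(F)\leq\int_F|\det D^2w|\,dx$ for every Borel $F\subset\Omega$ and then specializes to $F=E\cap\mathcal C$ using $\partial w(E)=\partial w(E\cap\mathcal C)$, rather than working through the convex envelope and area formula directly as you do.
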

\begin{proof}
Notice that the Sobolev embedding theorem guarantees that functions in $W^{2,n}_{loc}(\Omega)$ are continuous in $\Omega$. We first claim that
\begin{equation}\label{eq:subdifferential}
M w(F)  \leq
\int_{F} |\det D^2 w(x)| \,dx\quad \mbox{for all Borel sets}\quad F\subset \Omega.
\end{equation}
It is well known that \eqref{eq:subdifferential} holds if $w\in C^2(\Omega)$. For general $w\in W^{2,n}_{loc}(\Omega)$, let $\{w_m\}$ be a sequence of functions in $C^2(\Omega)$ converging to $w$ in the sense of $W^{2,n}_{loc}(\Omega)$. Let $U\subset \Omega$ be open and $K\subset  U$ be compact. Then $K\subset U\cap \Omega_{\epsilon}$ for all $\epsilon>0$ sufficiently  small, where $\Omega_{\epsilon} :=\{x\in \Omega:~ \dist(x,\partial\Omega)>\epsilon\}$. Since $Mw_m(U\cap \Omega_{\epsilon})  \leq
\int_{U\cap \Omega_{\epsilon}} |\det D^2 w_m(x)| \,dx$,  we get
\begin{align*}
\limsup_{m\to\infty}{M w_m(U\cap \Omega_{\epsilon})} 
\leq
\limsup_{m\to\infty}  \int_{U\cap \Omega_{\epsilon}} \Big|\det D^2 w_m - \det D^2 w\Big| \,dx 
+ \int_{U\cap \Omega_{\epsilon}} |\det D^2 w|\,dx.
\end{align*}
 Since the first term on the right hand side is clearly zero and the measures $M w_m$ converge to the measure $M w$ weakly, it follows by taking $\epsilon>0$ small enough that
$M w(K)
\leq
\int_{U} |\det D^2 w|\,dx$.
Consequently, 
\begin{equation}\label{weak-conver}
M w(U)
\leq \int_{U} |\det D^2 w(x)|\,dx
\end{equation}
by the regularity of the measure $Mw$.
Because \eqref{weak-conver} is true
 for any open set $U\subset\Omega$, we once again use the regularity of the measures to infer that the claim \eqref{eq:subdifferential} holds.

Now let $E\subset \Omega$ be an arbitrary Borel set. It is clear that $\partial w(E)=\partial w(E\cap \mathcal C)$ and so by using \eqref{eq:subdifferential} and the fact
$D^2w(x)\geq 0$ for almost every $x$ in   $\mathcal C$ we obtain
\begin{equation*}
M w(E) = M w(E\cap \mathcal C) \leq
\int_{E\cap \mathcal C} \det D^2 w(x)\,dx
\end{equation*}
and the estimate \eqref{eq:estimateofintegral}  follows by a calculation from \cite[Lemma~3.1]{GT}.
\end{proof}

Throughout this paper we always work with strong solutions, in the Sobolev space $W^{2,n}_{loc}(\Omega)$, of the linearized Monge-Amp\`ere equation. That is, the equation $\calL_\phi u=f$ in $\Omega$ is interpreted in the almost everywhere  sense in $\Omega$.

\subsection{Initial density estimates}
\begin{lemma}\label{lm:localcriticaldensityofGM}
Let $U$ be a normalized convex domain
and $\Omega$ be a bounded convex set such that $U\subset \Omega$. 
Let $\phi\in C^1(\Omega)\cap  W^{2,n}_{loc}(U)$ be a convex function satisfying 
$\lambda \leq \det D^2\phi \leq
\Lambda$ in $U$. Suppose $u\in  C(\Omega)\cap W^{2,n}_{loc}(U)\cap C^1(U)$, $ 0\leq u \leq 1$ in $\Omega$ and
$\calL_\phi u=f$ in $U$.
Then for each $\epsilon>0$ there exists  $\eta(\epsilon,n,\lambda, \Lambda)>0$ such
that for any $\eta\leq \eta(\epsilon,n,\lambda, \Lambda)$, we have 
\begin{align*}
\mu \big(G_{\frac{1}{\eta t_0}}^-(u,\Omega)\cap S_\phi(x_0,t_0)\big)
\geq\left[(1-\epsilon)^{\frac{1}{n}} -C \eta t_0\Big(\fint_{S_\phi(x_0,t_0)}{\big|\frac{f}{\det D^2\phi}\big|^n\,d\mu}\Big)^{\frac{1}{n}}  \right]^n\, \mu (S_\phi(x_0,t_0))
\end{align*}
for all sections $S_\phi(x_0,t_0)\Subset U$. Here  $\mu :=M\phi$ and $C$  depends only on $n$, $\lambda$ and $\Lambda$.
\end{lemma}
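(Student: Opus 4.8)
The plan is to apply the Aleksandrov–Bakelman–Pucci type estimate of Lemma~\ref{lm:estimateofintegral} to the function $w := u + \phi$ restricted to the section $S := S_\phi(x_0,t_0)$, after a suitable normalization. First I would normalize the section: since $\lambda \le \det D^2\phi \le \Lambda$ on $U \supset S$, by John's lemma there is an affine map $T$ sending $S$ to a normalized convex domain, and the relevant quantities (the measure $\mu$, the ratio $f/\det D^2\phi$, the sets $G_M^-$) transform in a controlled way under such affine renormalizations because $G_M^-(u,\Omega)$ is, by design (see the remark preceding Definition~\ref{def:GMs} in the introduction), affine invariant. This reduces matters to estimating, on a normalized section, the set where $u$ can be touched from below by the quasi-paraboloid $x \mapsto \ell(x) - M\,d(x,\bar x)^2$.

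The core computation goes as follows. Consider on $S$ the function $v(x) := u(x) + \frac{2}{\eta t_0}\bigl(\phi(x) - \phi(x_0) - \nabla\phi(x_0)\cdot(x-x_0) - t_0\bigr)$; on $\partial S$ one has $\phi(x) - \phi(x_0) - \nabla\phi(x_0)\cdot(x-x_0) = t_0$, so the correction term vanishes there and $v = u \ge 0$ on $\partial S$, while at $x_0$ the correction is $-\frac{2}{\eta}$, making $v(x_0) \le 1 - \frac{2}{\eta} < 0$ for $\eta$ small. Hence the convex envelope $\Gamma(v)$ of $\min(v,0)$ (or of $v$ after subtracting its boundary values) is nontrivial, and the contact set $\mathcal C = \{v = \Gamma(v)\}$ has positive measure. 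The key point is that at every contact point $\bar x \in \mathcal C \cap S$, the supporting hyperplane to $\Gamma(v)$ at $\bar x$ lies below $v$ on all of $S$, and unwinding the definition of $v$ this says precisely
\[
u(x) \ge u(\bar x) + \nabla u(\bar x)\cdot(x-\bar x) - \frac{2}{\eta t_0}\Bigl(\phi(x)-\phi(\bar x)-\nabla\phi(\bar x)\cdot(x-\bar x)\Bigr) = u(\bar x) + \nabla u(\bar x)\cdot(x-\bar x) - \frac{2}{\eta t_0}\,d(x,\bar x)^2
\]
for $x \in S$; to promote this to all of $\Omega$ (as required by Definition~\ref{def:GMs}) I would use that $0 \le u$ on $\Omega$ together with a lower bound on $d(x,\bar x)^2$ outside $S$ coming from convexity of $d(\cdot,\bar x)^2$ and the geometry of sections (Lemma~\ref{containment2}), at the cost of adjusting the constant $\eta$. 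This shows $\mathcal C \cap S \subset G^-_{1/(\eta t_0)}(u,\Omega)$ up to the harmless affine normalization.

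It then remains to bound $\mu(S) - \mu(\mathcal C \cap S)$ from above, equivalently to bound $\mu(S \setminus \mathcal C)$, which by $\mu = M\phi \le \det D^2\phi \cdot |\cdot|$ reduces to a Lebesgue-measure estimate; one compares $M v(S)$ from below by the normalized-geometry lower bound (the image of the gradient map of $\Gamma(v)$ contains a ball of radius $\gtrsim |v(x_0)|/\diam \gtrsim (2/\eta - 1)$, using $v(x_0) < 0$ and $v \ge 0$ on $\partial S$), and from above by Lemma~\ref{lm:estimateofintegral} applied to $w = \frac{\eta t_0}{2} v + \phi$ — so that the trace term $\trace(\Phi D^2 v)\cdot\tfrac{\eta t_0}{2}$ equals $\tfrac{\eta t_0}{2}(\calL_\phi u + \tfrac{2}{\eta t_0}\calL_\phi\phi) = \tfrac{\eta t_0}{2} f + n\det D^2\phi$. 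Feeding this into \eqref{eq:estimateofintegral} produces $M v(S) \lesssim \int_{\mathcal C \cap S}\bigl((\tfrac{\eta t_0}{2}\,\tfrac{f}{\det D^2\phi})^+\bigr)^n \det D^2\phi\,dx$, and combining the lower and upper bounds, dividing by $\mu(S)$, taking $n$-th roots and rearranging yields exactly the asserted inequality, with the averaged $L^n$ norm of $f/\det D^2\phi$ appearing from Hölder/the definition of $\fint$. The main obstacle I anticipate is the bookkeeping in the affine normalization — tracking how $t_0$, the factor $\eta t_0$, the measure $\mu$, and the constant $C$ behave under $T$, and verifying that the extension of the lower-paraboloid property from $S$ to all of $\Omega$ (Definition~\ref{def:GMs}) can indeed be arranged by shrinking $\eta$ without spoiling the density bound; the ABP-type estimate itself is essentially a direct invocation of Lemma~\ref{lm:estimateofintegral}.
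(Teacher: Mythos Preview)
Your overall architecture---normalize the section, form $w=\eta\,\tilde u+\tilde\phi$, take its convex envelope, show the contact set lies in $G^-_{1/(\eta t_0)}(u,\Omega)$, and bound its $\mu$-measure from below via Lemma~\ref{lm:estimateofintegral}---is exactly the paper's. The place where your sketch diverges, and where it has a genuine gap, is the step you yourself flag as the main obstacle: extending the touching inequality from $S_\phi(x_0,t_0)$ to all of~$\Omega$.

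Your proposed device (``use $u\ge 0$ together with a lower bound on $d(x,\bar x)^2$ outside $S$'') does not work as stated, because the contact point $\bar x$ can lie arbitrarily close to $\partial S$, and then there is \emph{no} uniform lower bound on $d(x,\bar x)^2$ for $x$ just outside $S$; shrinking $\eta$ does not help, since $\eta$ does not control the location of~$\bar x$. The paper avoids this entirely by a different mechanism. It does \emph{not} take the full contact set of $\Gamma(w_\alpha)$, but only those contact points $\bar y$ at which there is a supporting hyperplane $\ell$ satisfying $\ell<0$ throughout $\tilde\Omega=T(S)$. With that restriction one writes, for $g(y):=\ell(y)-\tilde\ell_{\bar y}(y)-t_0^{-1}d(T^{-1}y,T^{-1}\bar y)^2$, the identity $\eta\,\tilde u\ge g$ on $\tilde\Omega$ with equality at $\bar y$; since $-\tilde\ell_{\bar y}-t_0^{-1}d^2=-\tilde\phi=0$ on $\partial\tilde\Omega$ and $\ell<0$ there, one gets $g<0$ on $\partial\tilde\Omega$, and because $g$ is \emph{concave} the superlevel set $\{g\ge 0\}$ is connected, contains $\bar y$, and cannot cross $\partial\tilde\Omega$---hence $g\le 0$ on $T(\Omega)\setminus\tilde\Omega$, where $\tilde u\ge 0$ finishes the job. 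No information about $d(x,\bar x)^2$ outside $S$ is needed.

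This restriction on $\ell$ is also what makes the lower bound fit: the paper's ``Claim~1'' shows $\nabla\tilde\phi(\tilde\Omega_\alpha)\subset\nabla w_\alpha(\mathcal C_\alpha)$ by sliding down the tangent planes of $\tilde\phi+\eta(\alpha)$, which are automatically negative on $\tilde\Omega$ by the choice of $\eta(\alpha)$ (Lemma~\ref{containment2}). This gives directly $\mu(S_\phi(x_0,\alpha t_0))\le Mw_\alpha(\mathcal C_\alpha)$, and then choosing $\alpha$ close to~$1$ (doubling) produces the factor $(1-\epsilon)^{1/n}$. Your ``gradient image contains a ball'' bound is a coarser substitute that would yield some constant rather than $(1-\epsilon)^{1/n}\mu(S)$, and more importantly it is decoupled from the $\ell<0$ restriction that makes the extension step go through. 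So the missing idea is precisely this pairing: restrict the contact set to hyperplanes that are negative on the whole (normalized) section, which simultaneously (i) is compatible with the sliding-plane lower bound and (ii) makes the concavity/connectedness argument for the global touching inequality work.
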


\begin{proof}
Let $T$ normalize the section $S_\phi(x_0,t_0)$. For $y\in T(\Omega)$, set
\begin{align*}
\tilde \phi(y)=
\frac{1}{t_0} \left[
\phi(T^{-1}y) -\phi(x_0) - \nabla \phi(x_0)\cdot (T^{-1} y-x_0) -t_0\right]
\quad\text{and}\quad  \tilde u(y)&=  u(T^{-1}y).
\end{align*}
We have that $\tilde \Omega:= T(S_\phi(x_0,t_0))$
is normalized and $\tilde \phi=0$ on $\partial \tilde \Omega$. Also, it follows from \cite[Lemma~2.3]{GN} that $\lambda'
\leq \det D^2\tilde \phi \leq \Lambda'$ in $\tilde \Omega$, where $\lambda'$ and $\Lambda'$ depend only on $n$, $\lambda$ and $\Lambda$. 
By Lemma~\ref{containment2}, for each $0<\alpha <1$ there
exists $\eta(\alpha)=\eta(\alpha,n,\lambda,\Lambda)$ such that if $\bar y\in \tilde
\Omega_\alpha$, then $S_{\tilde \phi} (\bar y, \eta(\alpha))\Subset 
\tilde \Omega$. Therefore if $\bar y
\in \tilde \Omega_\alpha$, then 
\begin{equation}\label{strictinclude}
\tilde \phi(\bar y) + \nabla \tilde \phi (\bar
y)\cdot (y-\bar y) +\eta(\alpha)<0 \quad\mbox{for all}\quad  y\in \overline{\tilde
\Omega}.
\end{equation}

Define $w_\alpha(y)= \eta(\alpha) \tilde u(y) +\tilde \phi(y)$. Let
$\gamma_\alpha$ be the convex envelope of $w_\alpha$ in $\tilde
\Omega$ and 
\begin{align*}
\mathcal C_\alpha =
\{ y\in \tilde \Omega: &\,w_\alpha(y)=\gamma_\alpha(y),\\
&\text{ and $\exists$ $\ell_y$ supporting hyperplane
to $\gamma_\alpha$ at $y$ with $\ell_y<0$ in $\tilde \Omega$}\}.
\end{align*}

\bf Claim 1. \rm $\nabla \tilde \phi(\tilde \Omega_\alpha)= \nabla (\tilde \phi +\eta(\alpha))(\tilde \Omega_\alpha) \subset \nabla
w_\alpha(\mathcal C_\alpha).$

To prove this claim, note first that  $\tilde \phi +\eta(\alpha)\geq w_\alpha$ in $\tilde\Omega$ and $w_\alpha\geq 0$  on $\partial\tilde\Omega$.
If $\bar y \in \tilde \Omega_\alpha$, then by \eqref{strictinclude} we know that the supporting plane $z= \tilde \phi (\bar y)+ \nabla \tilde \phi(\bar y)\cdot (y-\bar y) +\eta(\alpha)$ to $\tilde \phi +\eta(\alpha)$ at $\bar y$ has the property: $z<0$ on  $\partial\tilde\Omega$. Therefore, if we slide it down, then it must become a supporting plane to  $w_\alpha$ at some point $y^*\in \tilde\Omega$ (say $\ell_{y^*}$). Since  $z<0$ in  $\tilde\Omega$, so is $\ell_{y^*}$ and hence 
$y^*\in \mathcal C_\alpha$.
Thus  $\nabla \tilde \phi (\bar y)\in
\nabla w_\alpha (\mathcal C_\alpha)$ as desired.

\bf Claim 2. \rm $\mathcal C_\alpha \subset T\Big(
 G_{1/(t_0\eta(\alpha))}^-(u,\Omega) \cap S_\phi(x_0,t_0)\Big),$ for every
$0< \alpha<1$.

{\em Proof of Claim 2}. Let $\bar y\in \mathcal C_\alpha$. Then $\bar
y=T\bar x$ for some $\bar x\in S_\phi(x_0,t_0)$ and
\begin{equation}\label{eq:etapluspsibiggerthanell}
\eta(\alpha) \, \tilde u(y)+\tilde\phi(y)\geq \ell(y) \quad \forall  y\in
T(S_\phi(x_0,t_0))
\end{equation}
with equality at $y=\bar y$, for some $\ell$ affine with $\ell<0$
in $T(S_\phi(x_0,t_0))$. As
$\phi(x) =\phi(\bar x) +\nabla \phi(\bar x)\cdot (x-\bar x) + d(x,\bar x)^2$
 in $\Omega$,   
we have
\[
\tilde\phi(y)= \tilde\phi(\bar y) +\nabla \tilde\phi(\bar y)\cdot (y-\bar y) +
t_0^{-1}\, d(T^{-1}y,T^{-1}\bar y)^2 =: \tilde \ell_{\bar
y}(y) + t_0^{-1}\, d(T^{-1}y,T^{-1}\bar y)^2
\]
for all $y\in T(\Omega)$. This together with \eqref{eq:etapluspsibiggerthanell} gives
\[
\eta(\alpha) \,\tilde u(y) \geq \ell(y)-\tilde \ell_{\bar y}(y) -
t_0^{-1}\, d(T^{-1}y, T^{-1}\bar y)^2 =: g(y)\quad \forall y\in T(S_\phi(x_0,t_0))
\]
with equality at $y=\bar y$. 
Assume for a moment that
\begin{equation}\label{g<0}
0\geq g(y) \quad\mbox{for all}\quad  y\in T(\Omega)\setminus T(S_\phi(x_0,t_0)).
\end{equation}
Since  $u\geq 0$ in $\Omega$, we then obtain
\begin{equation}\label{eq:etatimesubiggerthang}
\eta(\alpha)\,\tilde u(y)\geq g(y) \quad \forall y\in T(\Omega).
\end{equation}
To see \eqref{g<0}, let
$B :=\{y\in T(\Omega): g(y)\geq 0\}$.
Note that $\bar y\in B$. Also $B\cap \partial
T(S_\phi(x_0,t_0))=\emptyset$ because if $y\in \partial T(S_\phi(x_0,t_0))$ then $\ell(y)<0$ and so
$g(y)<-\tilde \ell_{\bar y}(y) - t_0^{-1}\, d(T^{-1}y,
T^{-1}\bar y)^2 = -\tilde\phi(y)=0$. Moreover, $B$ is connected  as $g$ is concave. Hence $B\subset T(S_\phi(x_0,t_0))$  implying \eqref{g<0}.

 Since $\ell$ is a supporting hyperplane to $\eta(\alpha)\,\tilde u(y) + \tilde\phi(y)$ at
$\bar y$, and $u,\phi\in C^1(U)$, it follows from \eqref{eq:etatimesubiggerthang} that
\[
\tilde u(y) \geq \tilde u(\bar y) + \nabla \tilde u(\bar y)\cdot
(y-\bar y) -\dfrac{1}{t_0\eta(\alpha)} \,d(T^{-1}y, T^{-1}\bar
y)^2,\quad \forall y\in T(\Omega).
\]
Thus we have proved that 
\begin{align*}
\mathcal C_\alpha \subset
\{\bar y\in T(S_\phi(x_0,t_0)):
\tilde u(y)\geq \tilde u(\bar y) + \nabla \tilde u(\bar
y)\cdot (y-\bar y) -
\dfrac{1}{t_0\eta(\alpha)} d(T^{-1}y, T^{-1}\bar y)^2
\, \forall y\in T(\Omega)\}
\end{align*}
yielding Claim~2 because  $\tilde u(y)=u(T^{-1}y)$.

Now let $\tilde \Phi(y) :=(D^2\tilde \phi (y))^{-1} \det
D^2\tilde \phi (y)$. Then as $D^2\tilde \phi(y)= t_0^{-1} (T^{-1})^t\,
D^2\phi(T^{-1}y) \, T^{-1}$ and  $D^2\tilde u(y)= (T^{-1})^t\, D^2u(T^{-1}y) \, T^{-1}$, we get
\[
\trace (\tilde \Phi(y) \,D^2\tilde
u(y)) =\frac{t_0}{t_0^n |\det T|^2}\,\trace (\Phi(T^{-1}y) \,D^2
u(T^{-1}y)) = \frac{t_0\, f(T^{-1} y)}{t_0^n |\det T|^2}\quad\mbox{in}\quad \tilde \Omega. 
\]
Therefore by applying Lemma \ref{lm:estimateofintegral}
with $\Omega\rightsquigarrow \tilde\Omega$, $u\rightsquigarrow \eta(\alpha)\tilde u$,
$\phi\rightsquigarrow \tilde \phi$, $E=\mathcal C_\alpha$ and using Claim 1 and the fact $t_0^n  |\det T|^2 \approx 1$,  we obtain
\begin{equation*}
\int_{\tilde \Omega_\alpha} \det D^2 \tilde \phi(y)\,dy \leq
\frac{1}{n^n}\int_{\mathcal C_\alpha}\left(\frac{C\eta(\alpha)\, t_0\, |f(T^{-1} y)|}{\det D^{2} \tilde\phi(y)} +n\right)^n \det D^2 \tilde \phi(y)\,dy.
\end{equation*}
Since $\mathcal C_\alpha \subset T\left( G_{1/
(t_0\eta(\alpha))}^-(u,\Omega)\cap S_\phi(x_0,t_0) \right)$  by Claim 2, $\det D^2\tilde \phi(y)= \det D^2
\phi(T^{-1}y)
$ and $\tilde \Omega_\alpha=T(S_\phi(x_0,\alpha t_0))$,  the above inequality implies 
\begin{equation*}
\int_{S_\phi(x_0,\alpha t_0)} \det D^2 \phi(x)\,dx \leq
\frac{1}{n^n}\int_{G_{1/
(t_0\eta(\alpha))}^-(u,\Omega)\cap S_\phi(x_0,t_0)}{\left(\frac{C \eta(\alpha)\,t_0\, |f(x)|}{\det D^{2} \phi(x)} +n\right)^n \det D^2 \phi(x)\,dx.}
\end{equation*}
We then infer from Minkowski's inequality and $\mu =M \phi$ that
\begin{align*}
&\mu(S_\phi(x_0,\alpha t_0)^{\frac{1}{n}}\\
&\leq \frac{C \eta(\alpha) t_0}{n}\left(\fint_{S_\phi(x_0,t_0)}{\Big|\frac{f}{\det D^2\phi}\Big|^n\,d\mu}\right)^{\frac{1}{n}} \mu(S_\phi(x_0,t_0))^{\frac{1}{n}}
 +\mu\left(G_{1/(t_0\eta(\alpha))}^-(u,\Omega)\cap S_\phi(x_0,t_0)\right)^{\frac{1}{n}}.
\end{align*}
Note that this inequality holds for any $\eta\leq \eta(\alpha)$. 
Given $\epsilon>0$ there exists
$\alpha=\alpha(\epsilon)$ sufficiently close to one such that $
(1-\epsilon)\mu(S_\phi(x_0, t_0)) \leq \mu(S_\phi(x_0, \alpha
t_0))$,
which combined with the previous inequality yields the lemma for any $\eta\leq \eta(\alpha(\epsilon))$.
\end{proof}

In the next lemma, we no longer require $0\leq u \leq 1$ in $\Omega$ as in Lemma~\ref{lm:localcriticaldensityofGM}.

\begin{lemma}\label{lm:localcriticaldensitytwo}
Let $U$ be a normalized convex domain and $\Omega$ be a bounded convex  set such that $U\subset\Omega$. Let $\phi\in  C^1(\Omega)\cap  W^{2,n}_{loc}(U)$ be a convex function satisfying  $\lambda \leq \det D^2\phi \leq
\Lambda$ in $U$. Suppose $u\in  C(\Omega)\cap W^{2,n}_{loc}(U)\cap C^1(U)$
is a solution of  $\calL_\phi u=f$ in $U$. Then for each $\epsilon>0$ there exists $\eta(\epsilon,n,\lambda,\Lambda)>0$ such
that if $S_\phi(x_0,t_0)\Subset U$ and 
$S_\phi(x_0,t_0)\cap G_\gamma(u,\Omega)$ contains a point $\bar x$ with $S_\phi(\bar x,\theta t_0)\Subset U$, then we have
\begin{align*}
\mu \big(G_{\frac{2\theta\gamma}{\eta}}^-(u,\Omega)\cap S_\phi(x_0,t_0)\big)
\geq\left[(1-\epsilon)^{\frac{1}{n}} -\frac{\eta}{2n\theta\gamma}\Big(\fint_{S_\phi(x_0,t_0)}{\big|\frac{f}{\det D^2\phi}\big|^n d\mu}\Big)^{\frac{1}{n}} \right]^n\, \mu (S_\phi(x_0,t_0))
\end{align*}
for all $\eta\leq \eta(\epsilon,n,\lambda,\Lambda)$. Here $\mu :=M\phi$ and   $\theta=\theta(n,\lambda,\Lambda)>1$ is the engulfing constant given by \cite[Theorem~3.3.7]{G}.
\end{lemma}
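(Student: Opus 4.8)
The plan is to reduce Lemma~\ref{lm:localcriticaldensitytwo} to Lemma~\ref{lm:localcriticaldensityofGM} by an affine renormalization together with a careful bookkeeping of how the constant $M$ in the paraboloid bound $M\,d(x,\bar x)^2$ transforms. First I would use the hypothesis that $S_\phi(x_0,t_0)\cap G_\gamma(u,\Omega)$ contains a point $\bar x$ with $S_\phi(\bar x,\theta t_0)\Subset U$. Since $\bar x\in G_\gamma(u,\Omega)$, we have the two-sided bound $|u(x)-u(\bar x)-\nabla u(\bar x)\cdot(x-\bar x)|\leq \gamma\, d(x,\bar x)^2$ for all $x\in\Omega$; in particular, on the larger section $S_\phi(\bar x,\theta t_0)$ the quantity $d(x,\bar x)^2=\phi(x)-\phi(\bar x)-\nabla\phi(\bar x)\cdot(x-\bar x)$ is bounded by $\theta t_0$, so $u$ differs from its tangent plane at $\bar x$ by at most $\gamma\theta t_0$ on that section. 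Using the engulfing property \cite[Theorem~3.3.7]{G}, $S_\phi(x_0,t_0)\subset S_\phi(\bar x,\theta t_0)$, so on $S_\phi(x_0,t_0)$ the function
\[
v(x) := \frac{u(x)-u(\bar x)-\nabla u(\bar x)\cdot(x-\bar x)}{2\gamma\theta t_0} + \frac12
\]
satisfies $0\leq v\leq 1$ — here the factor $2$ and the shift by $\tfrac12$ absorb both signs of the deviation. This is the step that uses the engulfing constant $\theta$ in an essential way and is where I expect the bookkeeping to be most delicate: one must verify the inclusion of sections and the resulting $L^\infty$ bound on $v$ simultaneously, and keep track of the fact that $v$ is only controlled on $S_\phi(x_0,t_0)$, not on all of $\Omega$.

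Next I would observe that $v$ solves a linearized Monge--Amp\`ere equation with the same $\phi$: since subtracting an affine function does not change $D^2 u$, we get $\calL_\phi v = \tilde f$ in $U\cap S_\phi(x_0,t_0)$ with $\tilde f = f/(2\gamma\theta t_0)$. Strictly, $v$ is only guaranteed to lie in $[0,1]$ on $S_\phi(x_0,t_0)$, so I would apply Lemma~\ref{lm:localcriticaldensityofGM} with the normalized convex domain taken to be (a normalization of) $S_\phi(x_0,t_0)$ playing the role of $U$ there, and with $\Omega$ replaced by $S_\phi(x_0,t_0)$ itself; the section $S_\phi(x_0,t_0)$ is compactly contained in $U$ by hypothesis, so all the regularity requirements $v\in C(\overline{S_\phi(x_0,t_0)})\cap W^{2,n}_{loc}\cap C^1$ are inherited from those of $u$. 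Applying Lemma~\ref{lm:localcriticaldensityofGM} to $v$ on this section with parameter $\eta$ gives, for any $\eta\leq\eta(\epsilon,n,\lambda,\Lambda)$,
\[
\mu\big(G^-_{1/(\eta t_0')}(v,\cdot)\cap S_\phi(x_0,t_0')\big) \geq \Big[(1-\epsilon)^{1/n} - C\eta t_0'\Big(\fint_{S_\phi(x_0,t_0')}\big|\tfrac{\tilde f}{\det D^2\phi}\big|^n d\mu\Big)^{1/n}\Big]^n \mu(S_\phi(x_0,t_0')),
\]
where $t_0'$ is the height of $S_\phi(x_0,t_0)$ as a section of the renormalized potential (so $t_0'\approx 1$ after normalization), and the constant $C$ changes only by factors depending on $n,\lambda,\Lambda$.

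Finally I would unwind the scaling to return to the original $u$ and $\phi$. The point of the definition $G_M^{\pm}$ being affine-invariant (emphasized in the introduction and realized in Definition~\ref{def:GMs} via the quasi-distance $d$) is that $G^-_{M}(v,\cdot)$ translates directly into $G^-_{M'}(u,\Omega)$: because $v$ is $u$ minus an affine function divided by $2\gamma\theta t_0$, a lower paraboloid bound $v(x)\geq v(\bar y)+\nabla v(\bar y)\cdot(x-\bar y) - M\, d(x,\bar y)^2$ is exactly the bound $u(x)\geq u(\bar y)+\nabla u(\bar y)\cdot(x-\bar y) - 2\gamma\theta t_0 M\, d(x,\bar y)^2$. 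Taking $M = 1/(\eta t_0')$ and recalling $t_0'\approx 1$, the effective constant becomes comparable to $2\gamma\theta t_0/\eta$, which (after renaming $\eta$) yields the claimed set $G^-_{2\theta\gamma/\eta}(u,\Omega)$; likewise the error term involving $f$ picks up exactly the factor $\eta/(2n\theta\gamma)$ after substituting $\tilde f = f/(2\gamma\theta t_0)$ and using $t_0't_0\approx t_0$. Collecting these identifications and absorbing the harmless $n,\lambda,\Lambda$-dependent constants into the choice of threshold $\eta(\epsilon,n,\lambda,\Lambda)$ completes the proof. The main obstacle, as noted, is the first step: correctly setting up the two-sided $L^\infty$ bound on the rescaled function using only the engulfing inclusion and the $G_\gamma$ membership of $\bar x$, while being honest about the fact that $v\in[0,1]$ holds only on the section and not globally.
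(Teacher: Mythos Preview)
Your reduction has a genuine gap at the step where you pass from $G^-_M(v,S_\phi(x_0,t_0))$ back to $G^-_{2\theta\gamma/\eta}(u,\Omega)$. When you apply Lemma~\ref{lm:localcriticaldensityofGM} with ``$\Omega$ replaced by $S_\phi(x_0,t_0)$ itself,'' the output is that at many points $\tilde y$ the lower paraboloid inequality
\[
v(x)\geq v(\tilde y)+\nabla v(\tilde y)\cdot(x-\tilde y)-M\,d(x,\tilde y)^2
\]
holds for $x\in S_\phi(x_0,t_0)$ only. Unwinding the affine substitution gives the corresponding inequality for $u$ on $S_\phi(x_0,t_0)$, not on $\Omega$. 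But membership in $G^-_{2\theta\gamma/\eta}(u,\Omega)$ requires the lower paraboloid to lie below $u$ on all of $\Omega$, and nothing in your argument extends the touching from the section to the ambient domain. The affine invariance of Definition~\ref{def:GMs} only says the $G^-$ sets transform correctly under affine maps; it does not upgrade the domain on which the touching holds.

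This extension is precisely the crux of the paper's proof, and it cannot be obtained as a black-box consequence of Lemma~\ref{lm:localcriticaldensityofGM}. The paper repeats the contact-set construction of Lemma~\ref{lm:localcriticaldensityofGM} but with a sharper restriction on the supporting hyperplanes (requiring $\ell<-\eta(\alpha)(\theta-\tfrac12)$ rather than merely $\ell<0$). This extra margin, combined with the global two-sided bound coming from $\bar x\in G_\gamma(u,\Omega)$ (which gives $\eta(\alpha)v(y)\geq \tfrac{\eta(\alpha)}{2}(1-d(T^{-1}y,\bar x)^2/(\theta t_0))$ on all of $T(\Omega)$), is what allows a connectedness argument to push the inequality $\eta(\alpha)v\geq g$ from the normalized section out to $T(\Omega)$. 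Your proposal uses the $G_\gamma$ hypothesis only to pin down $v\in[0,1]$ on the section; you never exploit the global lower bound it provides outside the section, and without that the extension step fails.
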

\begin{proof}
Let $T$ normalize
$S_\phi(x_0,t_0)$, and for $y\in T(\Omega)$ we set
\begin{align*}
\tilde\phi(y) = \dfrac{1}{t_0} \Big[
\phi(T^{-1}y)  -  \phi(x_0)
- \nabla \phi(x_0)\cdot (T^{-1}y -x_0) -t_0\Big]\quad
\text{and}\quad  \tilde u(y)&= \dfrac{1}{2\theta t_0}\, u(T^{-1}y).
\end{align*}
It follows that $\tilde\Omega :=T(S_\phi(x_0,t_0))$ is normalized, $\tilde\phi=0$ on $\partial \tilde \Omega$ and $\lambda'
\leq \det D^2\tilde\phi \leq \Lambda'$ in $\tilde \Omega$, where $\lambda'$ and $\Lambda'$ depend only on $n$, $\lambda$ and $\Lambda$. 

Let $\bar x\in S_\phi(x_0,t_0)\cap G_\gamma(u,\Omega)$ be  such that $S_\phi(\bar x,\theta t_0)\Subset U$,
and define $\bar y=T\bar x$. Then
$-\gamma \, d(x,\bar x)^2 \leq u(x)-u(\bar x) - \nabla u(\bar
x)\cdot (x-\bar x) \leq \gamma \, d(x,\bar x)^2$ for all $x$ in $\Omega$.
Hence by changing variables we get
\begin{equation}\label{eq:boundedbydistance}
-\gamma \, \dfrac{d(T^{-1}y,T^{-1}\bar y)^2}{2\theta t_0} \leq \tilde
u(y)- \tilde u(\bar y) - \nabla \tilde u(\bar y)\cdot (y-\bar y)
\leq \gamma \, \dfrac{d(T^{-1}y,T^{-1}\bar y)^2}{2\theta t_0}, \forall y\in T(\Omega).
\end{equation}
Since $\bar x\in S_\phi(x_0,t_0)$, we have $S_\phi(x_0,t_0)\subset
S_\phi(\bar x, \theta t_0)$ by the engulfing property. So, if
$x\in S_\phi(x_0,t_0)$, then $d(x,\bar x)^2 \leq \theta t_0$, and
consequently $d(T^{-1}y,T^{-1}\bar y)^2\leq \theta t_0$ for all
$y\in T(S_\phi(x_0,t_0))$. This together with \eqref{eq:boundedbydistance} gives
\[
-\frac{\gamma}{2} \leq \tilde u(y)- \tilde u(\bar y) - \nabla \tilde
u(\bar y)\cdot (y-\bar y) \leq \frac{\gamma}{2} \quad\mbox{in}\quad \tilde\Omega.
\]
Hence if $
v(y) : = \dfrac{1}{\gamma}\left[\tilde u(y)- \tilde u(\bar y) - \nabla \tilde u(\bar
y)\cdot (y-\bar y)+\frac{\gamma}{2}\right]$ for $y\in T(\Omega)$,
then $0\leq v \leq 1$ in $\tilde \Omega.$

Let $0<\alpha <1$. There exists $\eta(\alpha)=\eta(\alpha,n,\lambda,\Lambda)>0$ such that if $\bar
y\in \tilde \Omega_\alpha$, then
\begin{equation}\label{secondstrictinclude}
\tilde\phi(\bar y) + \nabla \tilde\phi(\bar y) \cdot (y-\bar y) + \eta(\alpha)
\big(\theta +\dfrac12\big) <0, \quad \text{
for all $y\in \overline{\tilde \Omega}$}.
\end{equation}

Define $w_\alpha(y)= \eta(\alpha) v(y) + \tilde\phi(y)$.  Let $\gamma_\alpha$ be the convex envelope of $w_\alpha$
in $\tilde \Omega$, and
\begin{align*}
&\mathcal C_\alpha = \Big\{ \tilde y\in \tilde \Omega: w_\alpha(\tilde
y)=\gamma_\alpha(\tilde y), \, \text{and $\exists$ $\ell$ supporting hyperplane to
$\gamma_\alpha$ at $\tilde y$,}\\
&\qquad\qquad\qquad\qquad\qquad\qquad\qquad\qquad\qquad\quad \text{with $\ell <-\eta(\alpha)(\theta -\dfrac12)$ in
$\tilde \Omega$}\Big\}.
\end{align*}

\bf Claim 1. \rm $\nabla \tilde\phi(\tilde \Omega_\alpha) =\nabla (\tilde\phi+ \eta(\alpha))(\tilde \Omega_\alpha)\subset \nabla
w_\alpha(\mathcal C_\alpha).$

The proof of this is similar to that of Claim~1 in Lemma~\ref{lm:localcriticaldensityofGM}.

\bf Claim 2. \rm $\mathcal C_\alpha \subset T\left(G_{2\theta\gamma/\eta(\alpha)}^-(u,\Omega)\cap  S_\phi(x_0,t_0) \right)$ for every $0<\alpha<1$.

{\em Proof of Claim 2}. Let $\tilde y\in \mathcal C_\alpha$. There
exists $\ell$ affine such that $\eta(\alpha)v(y)+ \tilde\phi(y)\geq
\ell(y)$ for all $y\in T(S_\phi(x_0,t_0)),$ and with equality at
$y=\tilde y$, and $\ell<-\eta(\alpha) (\theta -1/2)$ in
$T(S_\phi(x_0,t_0))$. Since 
$\tilde\phi(y) = \tilde \ell_{\tilde y}(y) + \dfrac{1}{t_0}\,
d(T^{-1}y, T^{-1}\tilde y)^2$
 $\forall y\in T(\Omega)$  where $\tilde \ell_{\tilde y}(y) := \tilde\phi (\tilde y) +\nabla \tilde\phi(\tilde
y)\cdot (y-\tilde y)$, we then have
\begin{equation}\label{eq:etapsibiggerthandiferenceofells}
\eta(\alpha)v(y) \geq \ell(y)-\tilde \ell_{\tilde y}(y) -
\dfrac{1}{t_0}\, d(T^{-1}y, T^{-1}\tilde y)^2 =:g(y) \quad \forall  y\in T(S_\phi(x_0,t_0))
\end{equation}
with equality at $y=\tilde
y$. {\it Our goal is to extend 
\eqref{eq:etapsibiggerthandiferenceofells} to the set
$T(\Omega)$}. We claim that
\begin{equation}\label{g<constant}
g(y) < \dfrac{\eta(\alpha)}{2}\, \left( 1 -
\dfrac{ d(T^{-1} y, T^{-1}\bar y)^2}{t_0\theta} \right)  \quad \forall y\in  T(\Omega)\setminus
T(S_\phi(x_0,t_0)).
\end{equation}
Assume this claim for a moment.
Notice that from \eqref{eq:boundedbydistance} we have that
\begin{equation}\label{etav}
\eta(\alpha)
v(y) \geq \dfrac{\eta(\alpha)}{2}\, \left( 1 -
\dfrac{ d(T^{-1} y, T^{-1}\bar y)^2}{t_0\theta} \right)  \quad \forall y\in T(\Omega),
\end{equation}
and therefore \eqref{eq:etapsibiggerthandiferenceofells}  holds for
all $y\in T(\Omega)$.
Using 
\eqref{eq:etapsibiggerthandiferenceofells}, the fact
$g(y)= \eta(\alpha) v(\tilde y) + \eta(\alpha) \nabla v(\tilde
y)\cdot (y-\tilde y) -
\dfrac{1}{t_0}\, d(T^{-1} y, T^{-1}\tilde y)^2$
and the definition of $v$, we obtain
\[
\tilde u(y)\geq \tilde u(\tilde y) + \nabla \tilde u(\tilde
y)\cdot (y-\tilde y) - \dfrac{\gamma}{t_0 \eta(\alpha)}\,
d(T^{-1} y, T^{-1}\tilde y)^2 \quad \forall y \in T(\Omega).
\]
Thus we have shown that
\begin{align*}
\mathcal C_\alpha
&\subset \{\tilde y\in T(S_\phi(x_0,t_0)): \tilde u(y)\geq \tilde
u(\tilde y) + \nabla \tilde u(\tilde y)\cdot (y-\tilde y) -
\dfrac{\gamma}{t_0 \eta(\alpha)}\,
d(T^{-1} y, T^{-1}\tilde y)^2 ~\forall y\in T(\Omega)\}\\
&= T\Big\{\tilde x\in S_\phi(x_0,t_0): u(x)\geq u(\tilde x) + \nabla
u(\tilde x)\cdot (x-\tilde x) - \dfrac{2
\theta \gamma}{\eta(\alpha)}\,
d(x, \tilde x)^2\quad \forall x\in \Omega\Big\}\\
&=T\Big(G_{\frac{2
\theta \gamma}{\eta(\alpha)}}^-(u,\Omega)\cap S_\phi(x_0,t_0)\Big).
\end{align*}
So Claim 2 holds as long as \eqref{g<constant} is proved. Observe that \eqref{g<constant} is equivalent to
\begin{equation}\label{eq:BcontainedinTS}
 B := \left\{y\in T(\Omega): g( y) \geq
\dfrac{\eta(\alpha)}{2}\, \left(1 - \dfrac{d(T^{-1} y, T^{-1}\bar y)^2}{t_0\theta}\right)\right\} \subset T(S_\phi(x_0,t_0)).
\end{equation}
Since $\eta(\alpha)/2\theta <1$, we have that the function
\begin{align*}
&-g(y)+\dfrac{\eta(\alpha)}{2}\, \left(1 - \dfrac{d(T^{-1} y, T^{-1}\bar y)^2}{t_0\theta}\right)\\
&=-\ell(y) +\tilde \ell_{\tilde y}(y) +
\dfrac{1}{t_0}\, d(T^{-1}y, T^{-1}\tilde y)^2 + \dfrac{\eta(\alpha)}{2}\, \left(1 - \dfrac{d(T^{-1} y, T^{-1}\bar y)^2}{t_0\theta}\right)\\
&=-\ell(y)+\frac{\eta(\alpha)}{2} +  \frac{\eta(\alpha)}{2\theta}\big[ \tilde\phi(\bar y) +\nabla\tilde\phi(\bar y)\cdot (y-\bar y)\big] + \left(1- \frac{\eta(\alpha)}{2\theta}\right) \tilde\phi(y)
\end{align*}
is convex and hence $B$ is connected. Moreover, $\tilde y \in B\cap T(S_\phi(x_0,t_0))$ by 
 \eqref{etav} and since $g(\tilde y) = \eta(\alpha) v(\tilde y)$. Thus, \eqref{eq:BcontainedinTS} will follow if
\begin{equation}\label{eq:Btildedoesnotcut}
 B\cap \partial T(S_\phi(x_0,t_0))=\emptyset.
\end{equation}
Recall that $\ell < -\eta(\alpha) (\theta -1/2)$ in
$T(S_\phi(x_0,t_0))$, and $\tilde \ell_{\tilde y}(y) +
\dfrac{1}{t_0} \,d(T^{-1} y, T^{-1}\tilde y)^2 = \tilde\phi(y)=0$ on $\partial T(S_\phi(x_0,t_0))$. 
In addition, $ d(T^{-1}y, T^{-1}\bar  y)^2\leq \theta t_0$ in
$T(S_\phi(x_0,t_0))$  since $S_\phi(x_0, t_0)\subset S_\phi(\bar x, \theta t_0)$.
Therefore, if 
$y\in \partial T(S_\phi(x_0,t_0))$  then 
\begin{align*}
&-g(y)+\dfrac{\eta(\alpha)}{2}\, \left(1 - \dfrac{d(T^{-1} y, T^{-1}\bar y)^2}{t_0\theta}\right)\\
&=-\ell(y) +\tilde \ell_{\tilde y}(y) + \dfrac{1}{t_0}\, d(T^{-1}y, T^{-1}\tilde y)^2 + \dfrac{\eta(\alpha)}{2}\, \left(1 - \dfrac{d(T^{-1} y, T^{-1}\bar y)^2}{t_0\theta}\right)
\geq \eta(\alpha)\big(\theta - \frac{1}{2}\big)>0,
\end{align*}
and hence \eqref{eq:Btildedoesnotcut} holds as desired. 
This completes the proof of \eqref{g<constant}, and so 
Claim~2 is  proved. 

The lemma now  follows by applying 
Lemma~\ref{lm:estimateofintegral}
with $\Omega\rightsquigarrow \tilde\Omega$, $u\rightsquigarrow \eta(\alpha)v$,
$\phi\rightsquigarrow \tilde\phi$, $E=\mathcal C_\alpha$ and using Claim~1 and Claim~2. The detailed calculations are the same as those in Lemma~\ref{lm:localcriticaldensityofGM}.
\end{proof}

\subsection{Initial power decay for the linearized Monge-Amp\`ere equation}
We next use Lemma~\ref{lm:localcriticaldensityofGM} and Lemma~\ref{lm:localcriticaldensitytwo} to derive  a small power decay estimate. To achieve this,  the covering result proved in \cite{CG2} is essential.

 \begin{proposition}\label{Initial-Estimate}
 Let $U$ be a normalized convex domain and $\Omega$ be a bounded convex set such that $U\subset \Omega$. Let $\phi\in   C^1(\Omega)\cap  W^{2,n}_{loc}(U)$
be a convex function satisfying  $\lambda \leq \det D^2 \phi
\leq \Lambda$ in $U$ and $\phi=0$ on $\partial U$. Suppose $u\in  C(\Omega)\cap W^{2,n}_{loc}(U)\cap C^1(U)$, $|u|\leq 1$ in $\Omega$ and 
$\calL_\phi u=f$ in $U$ with
$\|f/\det D^2 \phi \|_{L^n(U,\mu)}\leq 1$.
Then for any $0<\alpha<1$, there exist $C,\,\tau>0$ depending only on $\alpha$, $n$, $\lambda$ and $\Lambda$ such that
\begin{equation*}
\mu\big(U_{\alpha}\setminus G_{\beta}(u,\Omega)\big) \leq
\dfrac{C}{\beta^\tau}\quad \mbox{for all $\beta$ large},
\end{equation*}
where $U_\alpha$ is defined as in  \eqref{def:omegaalpha}.
\end{proposition}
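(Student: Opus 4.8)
The plan is to obtain the power decay estimate by a stopping-time / covering argument, in the spirit of the Calder\'on--Zygmund decomposition adapted to sections, exactly as used in \cite{CG2} and \cite{GT}. First I would reduce to a favorable geometric configuration: by the strict convexity and interior $C^{1,\alpha}$ regularity of $\phi$ (Caffarelli's theorem quoted after $(\bH)$), together with Lemma~\ref{containment2}, there is a height $\eta_1=\eta_1(\alpha,n,\lambda,\Lambda)$ and a slightly larger parameter $\alpha'\in(\alpha,1)$ so that every section $S_\phi(x,t)$ with $x\in U_{\alpha}$ and $t\le\eta_1$ is compactly contained in $U_{\alpha'}\Subset U$, and so that the engulfing property holds with a uniform constant $\theta=\theta(n,\lambda,\Lambda)$. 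All the hypotheses of Lemma~\ref{lm:localcriticaldensityofGM} and Lemma~\ref{lm:localcriticaldensitytwo} are then available on such sections, since $|u|\le 1$ and $\|f/\det D^2\phi\|_{L^n(U,\mu)}\le1$.

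Next I would set up the density dichotomy. Fix $\epsilon\in(0,1)$ small (to be chosen, e.g. $\epsilon=1/2$), and let $\eta_*=\eta_*(\epsilon,n,\lambda,\Lambda)$ be the minimum of the two thresholds produced by Lemma~\ref{lm:localcriticaldensityofGM} and Lemma~\ref{lm:localcriticaldensitytwo}. Working with the normalized function $v=(u+1)/2$ so that $0\le v\le1$, Lemma~\ref{lm:localcriticaldensityofGM} gives, for a fixed small height $t_0\le\eta_1$,
\begin{equation*}
\mu\big(G_{N}^-(v,\Omega)\cap S_\phi(x_0,t_0)\big)\ \ge\ \Big[(1-\epsilon)^{1/n}-C\eta_* t_0\,\big(\textstyle\fint_{S_\phi(x_0,t_0)}|f/\det D^2\phi|^n\,d\mu\big)^{1/n}\Big]^n\,\mu(S_\phi(x_0,t_0)),
\end{equation*}
where $N=1/(\eta_* t_0)$; shrinking $t_0$ further if necessary, the bracket exceeds some fixed $\delta_0=\delta_0(n,\lambda,\Lambda)>0$ whenever the local $L^n$ average of $f/\det D^2\phi$ is under control, and the bad averages can only occur on a set of small $\mu$-measure by the weak $1$--$1$ (indeed strong $p$--$p$) bound for $\mathcal M_\mu$ in Theorem~\ref{strongtype}. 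This yields the \emph{base case}: $\mu(U_{\alpha}\setminus G_{N_0}(u,\Omega))\le (1-c_0)\,\mu(U_\alpha)$ for suitable $N_0,c_0$. Then Lemma~\ref{lm:localcriticaldensitytwo} is the \emph{propagation step}: if a section $S_\phi(x_0,t_0)$ meets $G_\gamma(u,\Omega)$ at an interior point, then a definite $\mu$-fraction of it already lies in $G_{2\theta\gamma/\eta_*}(u,\Omega)$. The contrapositive says that the sections in a Calder\'on--Zygmund-type stopping-time decomposition of $U_\alpha\setminus G_{2\theta\gamma/\eta_*}(u,\Omega)$ at density level $1-\delta_0$ are either disjoint from $G_\gamma(u,\Omega)$ or carry a large local $L^n$ mass of $f/\det D^2\phi$.

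I would then iterate. Write $M_k=N_0(2\theta/\eta_*)^k$. Applying the decomposition repeatedly and using the Vitali-type covering theorem for sections of the Monge--Amp\`ere equation from \cite{CG1,CG2}, one gets
\begin{equation*}
\mu\big(U_\alpha\setminus G_{M_{k+1}}(u,\Omega)\big)\ \le\ (1-\delta_0)\,\mu\big(U_\alpha\setminus G_{M_k}(u,\Omega)\big)\ +\ \mu\{x\in U_\alpha:\ \mathcal M_\mu\big(|f/\det D^2\phi|^n\big)(x)>c_1\,M_k^n\},
\end{equation*}
and the extra term decays like $C\,(c_1 M_k^n)^{-1}=C'\,(2\theta/\eta_*)^{-kn}$ by the weak $1$--$1$ bound for $\mathcal M_\mu$ applied to $|f/\det D^2\phi|^n\in L^1(U,\mu)$. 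Summing the geometric recursion with this forcing term gives $\mu(U_\alpha\setminus G_{M_k}(u,\Omega))\le C(1-\delta_0)^k$; since $M_k\asymp\beta$ translates $k\asymp\log_{2\theta/\eta_*}\beta$, this is exactly a power bound $\mu(U_\alpha\setminus G_\beta(u,\Omega))\le C\beta^{-\tau}$ with $\tau=\tau(\alpha,n,\lambda,\Lambda)=-\log(1-\delta_0)/\log(2\theta/\eta_*)>0$. The main obstacle, and the place requiring the most care, is the combinatorics of the stopping-time decomposition in the non-Euclidean setting: one must verify that the covering/engulfing machinery of \cite{CG1,CG2} lets the two density lemmas be chained at every scale with \emph{uniform} constants (in particular that the height $\theta t_0$ appearing in Lemma~\ref{lm:localcriticaldensitytwo} keeps the relevant sections inside $U$), and that the exceptional set is always absorbed into a single maximal-function sublevel set so that Theorem~\ref{strongtype} applies cleanly. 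Everything else — the normalization reductions, the choice of $\epsilon$, and the summation of the geometric series — is routine once this bookkeeping is in place.
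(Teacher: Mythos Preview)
Your approach is essentially the one in the paper: combine the two density lemmas, run a Calder\'on--Zygmund-type iteration over sections, absorb the bad set into a maximal-function superlevel set controlled by the weak $1$--$1$ bound, and sum the resulting geometric series. The only point where your write-up differs from what actually works is the displayed recursion
\[
\mu\big(U_\alpha\setminus G_{M_{k+1}}(u,\Omega)\big)\ \le\ (1-\delta_0)\,\mu\big(U_\alpha\setminus G_{M_k}(u,\Omega)\big)\ +\ \cdots
\]
with the \emph{same} $U_\alpha$ on both sides. This is exactly the obstacle you flag at the end but do not resolve: for $x_0\in U_\alpha$ the stopping section $S_\phi(x_0,t_{x_0})$ need not lie in $U_\alpha$, only in a slightly larger $U_{\alpha'}$, so the containment \eqref{eq:sectioncontainedinomegaunionmaxfunctionbig}-type step forces the right-hand side to live at a larger level. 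The paper handles this by introducing a \emph{decreasing} sequence $\alpha_0>\alpha_1>\alpha_2>\cdots$ with $\alpha_{k+1}=\alpha_k-(C_0\theta M^k)^{-1/\gamma}$ (so that sections of height $\le 1/\theta M^k$ centered in $U_{\alpha_{k+1}}$ stay inside $U_{\alpha_k}$ by Lemma~\ref{containment2}), proving instead
\[
\mu\big(U_{\alpha_{k+1}}\setminus G_{M^{k+1}}(u,\Omega)\big)\ \le\ 2\sqrt{\epsilon}\,\Big[\mu\big(U_{\alpha_k}\setminus G_{M^{k}}(u,\Omega)\big)+b_k\Big],
\]
and choosing $M$ large so that $\alpha_k\ge 2\alpha_0-1=\alpha$ for all $k$. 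With that correction your outline is the paper's proof.
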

\begin{proof}
Let $0<\epsilon <1/2$ and  $\eta(\epsilon,n,\lambda, \Lambda)$ be the smallest of the constants in
Lemma~\ref{lm:localcriticaldensityofGM} and
Lemma~\ref{lm:localcriticaldensitytwo}.
Next fix $0<\eta\leq \eta(\epsilon,n,\lambda, \Lambda)$  small so that 
$[(1-\epsilon)^{1/n} -C \eta ]^n
\geq 1-2 \epsilon$, where $C=C(n,\lambda,\Lambda)$.
Applying Lemma
\ref{lm:localcriticaldensityofGM} to the functions
$\dfrac{u+1}{2}$ and $\dfrac{-u+1}{2}$, and noticing that
$G_N^-(\frac{u+1}{2},\Omega)= G_{2N}^-(u,\Omega)$ and $G_N^-(\frac{-u+1}{2})=
G_{2N}^+(u,\Omega)$, we obtain 
\begin{align*}
&\mu \big(S_\phi(x_0,t_0)\cap G_{2/\eta t_0}^-(u,\Omega)\big)
\geq
\left[(1-\epsilon)^{1/n} -C \eta \right]^n\, \mu \big(S_\phi(x_0,t_0))\geq (1-2 \epsilon)\, \mu (S_\phi(x_0,t_0)),\\
&\mu (S_\phi(x_0,t_0)\cap G_{2/\eta t_0}^+(u,\Omega)\big)
\geq
\left[(1-\epsilon)^{1/n} -C \eta \right]^n\, \mu (S_\phi(x_0,t_0))\geq (1-2 \epsilon)\, \mu (S_\phi(x_0,t_0))
\end{align*}
for any $S_\phi(x_0,t_0)\Subset U$.  
Taking $M:=2\theta/\eta$, it then follows that
\begin{align*}
\mu \Big(S_\phi(x_0,t_0)\setminus G_{M/\theta t_0}(u,\Omega)\Big)
&\leq \mu \Big(S_\phi(x_0,t_0)\setminus G_{M/\theta t_0}^+(u,\Omega) \Big) +
\mu \Big(S_\phi(x_0,t_0)\setminus
G_{M/\theta t_0}^-(u,\Omega)  \Big)\\
&\leq 4\epsilon \mu(S_\phi(x_0,t_0))
\end{align*}
as long as $S_\phi(x_0,t_0)\Subset U$.

Set $\alpha_0 := \frac{\alpha +1}{2}$. Assume $\alpha_2<\alpha_1<\alpha_0$ are such that there exist $\eta_2<\eta_1$ with the property:  
if $x\in U_{\alpha_2}$ and $t\leq \eta_2$ then
$S_\phi(x,t)\subset U_{\alpha_1}$; and if $x\in U_{\alpha_1}$ and $t\leq \eta_1$ then
$S_\phi(x,t)\subset U_{\alpha_0}$. 
Let $h\geq 1/\eta_0$ satisfy $1/\theta h \leq \eta_2$. For $x_0\in
U_{\alpha_2}\setminus G_{h M}(u,\Omega)$, 
define
\[
g(t) := \dfrac{\mu\Big((U_{\alpha_2}\setminus G_{h M}(u,\Omega))
\cap S_\phi(x_0,t)\Big)}{\mu(S_\phi(x_0,t))},\quad t>0.
\]
We have $\lim_{t\to 0} g(t)=1.$ Also, if $1/\theta h \leq t
<\eta_1$, then $S_\phi(x_0,t)\subset U_{\alpha_0}$ and
\begin{align*}
\mu\Big((U_{\alpha_2}\setminus G_{h M}(u,\Omega)) \cap
S_\phi(x_0,t)\Big) &\leq
\mu(S_\phi(x_0,t)\setminus G_{h M}(u,\Omega))\\
&\leq
\mu(S_\phi(x_0,t)\setminus G_{M/\theta t}(u,\Omega))
\leq 4\epsilon \, \mu(S_\phi(x_0,t)),
\end{align*}
since $G_{M/\theta t}(u,\Omega)\subset G_{h M}(u,\Omega)$. Therefore $g(t)\leq
4\epsilon$ for $t\in [1/\theta h, \eta_1)$ and so by continuity of
$g$, there exists $t_{x_0}\leq 1/\theta h $ satisfying  $g(t_{x_0})=4\epsilon$. Thus, 
we have shown that  for any $x_0\in
U_{\alpha_2}\setminus G_{h M}(u,\Omega)$ there is
$t_{x_0}\leq 1/\theta h$ such that
\begin{equation}\label{eq:criticaldensitynonhomoge}
\mu\Big((U_{\alpha_2}\setminus G_{h M}(u,\Omega)) \cap
S_\phi(x_0,t_{x_0})\Big) = 4\epsilon \, \mu(S_\phi(x_0,t_{x_0})).
\end{equation}
We now claim that \eqref{eq:criticaldensitynonhomoge} implies 
\begin{equation}\label{eq:sectioncontainedinomegaunionmaxfunctionbig}
S_\phi(x_0,t_{x_0}) \subset \big(U_{\alpha_1}\setminus G_h
(u,\Omega)\big) \cup \big\{x\in U_{\alpha_0}: \mathcal M_\mu ((f/\det D^2
\phi)^n)(x)> (2\theta n h)^n\big\}.
\end{equation}
Otherwise, and since $x_0\in U_{\alpha_2}$ and $t_{x_0}\leq
1/\theta h \leq \eta_2$, we have $S_\phi(x_0,t_{x_0})\subset
U_{\alpha_1}$ and there exists $\bar x\in
S_\phi(x_0,t_{x_0})\cap G_h(u,\Omega)$ such that $\mathcal M_\mu
((f/\det D^2 \phi)^n)(\bar x)\leq (2\theta n h)^n$. Note also that $S_\phi(\bar x,\theta t_{x_0}) \Subset U$ as $\bar x\in U_{\alpha_0}$ and $\theta t_0 \leq 1/h\leq \eta_0$. Then by
Lemma~\ref{lm:localcriticaldensitytwo} applied to $u$ and $-u$ and by our choice of $\eta$, we obtain
\begin{align*}
&(1-2 \epsilon) \, \mu(S_\phi(x_0,t_{x_0})) <
\mu(S_\phi(x_0,t_{x_0})\cap G_{h M}^-(u,\Omega)),\\
&(1-2\epsilon) \, \mu(S_\phi(x_0,t_{x_0})) <
\mu(S_\phi(x_0,t_{x_0})\cap G_{h M}^+(u,\Omega)).
\end{align*}
Hence
\[
\mu\Big((U_{\alpha_2}\setminus G_{h M}(u,\Omega))\cap
S_\phi(x_0,t_{x_0})\Big) \leq \mu\Big(S_\phi(x_0,t_{x_0})\setminus
G_{h M}(u,\Omega)\Big) < 4\epsilon \,\mu(S_\phi(x_0,t_{x_0})),
\]
a contradiction with \eqref{eq:criticaldensitynonhomoge}. So
\eqref{eq:sectioncontainedinomegaunionmaxfunctionbig} is proved and  we can  apply 
 the covering result \cite[Theorem~6.3.3]{G} to conclude that
\begin{align}\label{eq:powerdecaynonhomowithtildedelta}
&\mu(U_{\alpha_2}\setminus G_{h M}(u,\Omega))\\
&\leq 2 \sqrt{\epsilon}\left[
\mu(U_{\alpha_1}\setminus G_{h}(u,\Omega))
 +  \mu\big\{x\in U_{\alpha_0}:
\mathcal M_\mu ((f/\det D^2\phi)^n)(x)> (2\theta n h)^n\big\}\right],\nonumber
\end{align}
as long as $\alpha_2<\alpha_1<\alpha_0$ are such that $\eta_2<\eta_1$, and  $h\geq 1/\eta_0$ satisfy $1/\theta h \leq
\eta_2$.

For $k\in\N$, set
\[
a_k:= \mu(U_{\alpha_k}\setminus G_{M^k}(u,\Omega)) \quad \text{and}\quad b_k:=\mu\big\{x\in U_{\alpha_0}:
\mathcal M_\mu ((f/\det D^2\phi)^n)(x)> (2\theta n M^k)^n\big\}, 
\]
where $\alpha_k$ will be defined inductively in the sequel. First fix $\alpha_1$ so that 
$2\alpha_0-1<\alpha_1<\alpha_0$ and take
$\eta_1 := C_0 (\alpha_0-\alpha_1)^{\gamma}$,
where $C_0$ and $\gamma$  are the constants
in Lemma~\ref{containment2}. Let $h=M$, and set
$\alpha_2=\alpha_1 -\frac{1}{(C_0 \theta M)^{1/\gamma}}$. Then 
$\frac{1}{\theta h}
=\frac{1}{\theta M}=C_0 (\alpha_1-\alpha_2)^{\gamma}=: \eta_2$, and
so from Lemma~\ref{containment2} and \eqref{eq:powerdecaynonhomowithtildedelta}  we get
\begin{align*}
a_2\leq 2\sqrt{\epsilon}(a_1 + b_1).
\end{align*}
Next let $h=M^2$ and $\alpha_3 = \alpha_2 -\frac{1}{ (C_0 \theta M^2)^{1/\gamma}}$, so
$\frac{1}{\theta h}=C_0
(\alpha_2-\alpha_3)^{\gamma}=:\eta_3$. Then
\begin{align*}
a_3\leq 2\sqrt{\epsilon}(a_2 + b_2).
\end{align*}
Continuing in this way we let $h =M^k$ and $\alpha_{k+1}=
\alpha_k -\frac{1}{(C_0 \theta M^k)^{1/\gamma}}$.
Then $\frac{1}{\theta h} =C_0
(\alpha_k-\alpha_{k+1})^{\gamma}=:\eta_k$, and
$a_{k+1}\leq 2\sqrt{\epsilon}(a_k + b_k)$.
These imply that
\begin{align*}
a_{k+1}\leq (2\sqrt{\epsilon})^k a_1 + \sum_{i=1}^{k}{(2\sqrt{\epsilon})^{(k+1)-i} b_i}.
\end{align*}
On the other hand, $\alpha_{k+1}=\alpha_1- \sum_{j=1}^k \frac{1}{(C_0 \theta M^j)^{1/\gamma}} \geq \alpha_1 - \frac{1}{(C_0\theta)^{1/\gamma}} \frac{1}{M^{1/\gamma} -1}
\geq 2\alpha_0 -1$ by choosing  $\eta$ even smaller depending on $\alpha$ (recall that $M=2\theta/\eta$).
Therefore, we obtain
\begin{equation*}
\mu(U_{2\alpha_0 -1}\setminus G_{M^{k+1}}(u,\Omega)) \leq
\mu(U_{\alpha_{k+1}}\setminus G_{M^{k+1}}(u,\Omega))
\leq (2\sqrt{\epsilon})^k a_1 + \sum_{i=1}^{k}{(2\sqrt{\epsilon})^{(k+1)-i} b_i} 
\end{equation*}
for all $k=1,2,\dots$
Moreover, 
\begin{align*}
b_i \leq \dfrac{C(n,\lambda,\Lambda)}{M^{n i}}\, \int_U \Big|\frac{f}{\det
D^2 \phi}\Big|^n\, d\mu(x) \leq C(n,\lambda,\Lambda) M^{-n i}
\end{align*}
because  $\mathcal M_\mu$ is of weak type $1-1$ (see \cite[Theorem~2.9]{GT}).
Thus, by setting $m_0 :=\max\{2\sqrt{\epsilon}, M^{-n}\}$  we  then have
\begin{equation*}
\mu(U_{2\alpha_0 -1}\setminus G_{M^{k+1}}(u,\Omega)) \leq
m_0^k a_1 + C k m_0^{k+1}\leq C(\epsilon, n,\lambda,\Lambda) m_0^{k+1}(1 + k).
\end{equation*}
Writing
$m_1=\sqrt{m_0}$ and since $m_0<1$, we conclude  that $m_0^{k+1}
\,(1+k)\leq  C'(m_0)\, m_1^{k+1}$ and so
$\mu(U_{2\alpha_0 -1}\setminus G_{M^{k+1}}(u,\Omega)) \leq
 C\,
m_1^{k+1}$.
Now for any $\beta \geq M^2$, pick $k\in \N$ such that $M^{k+1}\leq \beta < M^{k+2}$, then
$k+1\leq \log_M \beta < k+2$ and
\begin{align*}\label{initial-est-LA}
\mu(U_{\alpha}\setminus G_\beta(u,\Omega))
&=
\mu(U_{2\alpha_0 -1}\setminus G_\beta(u,\Omega))\\
& \leq
\mu(U_{2\alpha_0 -1}\setminus G_{M^{k+1}}(u,\Omega)) \leq  C\,
m_1^{k+1} \leq \dfrac{C}{m_1} \, \beta^{\log_M m_1}.
\end{align*}
 \end{proof}
 
\section{$L^p$ estimates for second derivatives}\label{sec:L^p-estimate}
We established in Proposition~\ref{Initial-Estimate} that 
\[\mu (U_\alpha \setminus G_\beta(u,\Omega))\leq C \beta^{-\tau}
\]
 when $\lambda\leq \det D^2\phi\leq \Lambda$.
 This power decay estimate is very poor as $\tau>0$ is  small. However,  we will demonstrate in this section that
 $\tau$ can be taken to be any finite number provided that
$\det D^2\phi$ is sufficiently close to the constant $1$ in $L^\infty$ norm. In order to perform this acceleration process, the following approximation lemma is crucial. This lemma is a variant of  \cite[Lemma~4.1]{GN} and allows us to compare explicitly two solutions originating from two different linearized Monge-Amp\`ere equations.  We assume below that $\phi, w\in C(\overline U)$ are convex functions satisfying  $\frac{1}{2} \leq \det D^2 \phi\leq \frac{3}{2}$, $\det D^2 w =1$ in $U$ and $\phi = w =0$ on $\partial U$. Also the matrices of cofactors of $D^2\phi$ and $D^2w$ are denoted by $\Phi$ and   $\calW$ respectively. 

\begin{lemma}\label{explest} 
Let $U$ be a normalized convex domain and  $u\in W^{2,n}_{loc}(U)\cap C(\overline U)$ be a  solution of 
$\Phi_{i j} D_{i j}u = f$ in $U$
with $|u|\leq 1$ in  $U$. Assume $0<\alpha_1<1$ and $h\in W^{2,n}_{loc}(U_{\alpha_1})\cap C(\overline U_{\alpha_1})$  is a solution of
\begin{equation*}
\left\{\begin{array}{rl}
\calW_{i j} D_{i j}h  &= 0 \quad \mbox{ in }\quad U_{\alpha_1}\\
h  &=u\quad\mbox{ on } \quad\partial U_{\alpha_1}.
\end{array}\right.
\end{equation*}
Then there exists $\gamma \in (0,1)$ depending only on $n$ such that for any $0<\alpha_2<\alpha_1$, we have
\[
\|u-h\|_{ L^\infty(U_{\alpha_2})}
+\|f-\trace([\Phi-\calW ]D^2h)\|_{ L^n(U_{\alpha_2})}
\leq C(\alpha_1,\alpha_2,n) \left\{\| \Phi - \calW\|_{L^n(U_{\alpha_1})}^\gamma + \|f\|_{L^n(U)} \right\}
\]
provided that 
$\| \Phi - \calW\|_{L^n(U_{\alpha_1})} \leq (\alpha_1 -\alpha_2)^{\frac{2n}{1+ (n-1)\gamma}}$. 
\end{lemma}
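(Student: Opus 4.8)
The plan is to reduce everything to the single function $v:=u-h$ on $U_{\alpha_1}$. Writing the equation $\calW_{ij}D_{ij}h=0$ as $\Phi_{ij}D_{ij}h=(\Phi-\calW)_{ij}D_{ij}h$ and subtracting from $\Phi_{ij}D_{ij}u=f$, we get
\[
\calL_\phi v=\Phi_{ij}D_{ij}v=f-\trace([\Phi-\calW]D^2h)=:g\quad\text{in }U_{\alpha_1},\qquad v=0\ \text{on }\partial U_{\alpha_1}.
\]
So the $L^\infty$ bound for $v$ should come from the Aleksandrov--Bakelman--Pucci estimate for $\calL_\phi$ (which follows from Lemma~\ref{lm:estimateofintegral} together with the geometric maximum principle, and, since $\tfrac12\le\det D^2\phi\le\tfrac32$, controls $\|v\|_{L^\infty}$ by $C(n)\|g\|_{L^n}$ plus boundary values on a subdomain), combined with interior estimates for $h$. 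The $L^n$ part of the conclusion is the routine one: on $U_{\alpha_2}\Subset U_{\alpha_1}$ the function $h$ is smooth with $\|D^2h\|_{L^\infty(U_{\alpha_2})}\le C(\alpha_1,\alpha_2,n)$ (see below), so $\|g\|_{L^n(U_{\alpha_2})}\le\|f\|_{L^n(U)}+C(\alpha_1,\alpha_2,n)\|\Phi-\calW\|_{L^n(U_{\alpha_1})}$, and since the hypothesis forces $\|\Phi-\calW\|_{L^n(U_{\alpha_1})}\le1$ we may replace it by $\|\Phi-\calW\|_{L^n(U_{\alpha_1})}^{\gamma}$.

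Next I would record the properties of $h$. Because $\det D^2w\equiv1$ and $w=0$ on $\partial U$, classical (Pogorelov/Caffarelli) regularity gives $w\in C^\infty(U)$ and $c(\beta,n)I\le D^2w\le C(\beta,n)I$ on $\overline{U_\beta}$ for every $\beta<1$; hence $\calL_w$ is uniformly elliptic with smooth coefficients on $\overline{U_{\alpha_1}}\Subset U$, with constants depending only on $\alpha_1,n$. Thus $|h|\le\|u\|_{L^\infty(\partial U_{\alpha_1})}\le1$, and interior Schauder estimates (using Lemma~\ref{containment2} to compare the sizes of the sections) yield $\|D^2h\|_{L^\infty(U_{\alpha'})}\le C(\alpha_1,n)(\alpha_1-\alpha')^{-c_2}$ for $\alpha_2\le\alpha'<\alpha_1$ and a power $c_2=c_2(n)>0$. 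I also need $\sup_{\partial U_{\alpha'}}|v|$ small: here I would use that $u$, being a bounded solution of $\calL_\phi u=f$ with $f\in L^n$, is H\"older continuous in the interior with $\|u\|_{C^{\beta_0}(\overline{U_{\alpha_1}})}\le C(\alpha_1,n)(1+\|f\|_{L^n(U)})$ by the Caffarelli--Guti\'errez Harnack inequality \cite{CG2}; transferring this boundary datum to $h$ via linear barriers for the non-degenerate operator $\calL_w$ on the convex section $U_{\alpha_1}$ gives $v\in C^{\beta_1}(\overline{U_{\alpha_1}})$ with a norm $\le C(\alpha_1,n)(1+\|f\|_{L^n(U)})$. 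Since $v=0$ on $\partial U_{\alpha_1}$ and the collar $U_{\alpha_1}\setminus U_{\alpha'}$ has width $\le C(\alpha_1,n)(\alpha_1-\alpha')$ (as $|\nabla\phi|$ is bounded below near $\partial U_{\alpha_1}$), this gives $\sup_{\partial U_{\alpha'}}|v|\le C(\alpha_1,n)(1+\|f\|_{L^n(U)})(\alpha_1-\alpha')^{c_1}$ with $c_1=c_1(n)>0$.

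Then comes the optimization. Applying the ABP estimate to $v$ on the intermediate section $U_{\alpha'}$ and combining the three bounds, with $\delta:=\|\Phi-\calW\|_{L^n(U_{\alpha_1})}$,
\[
\|v\|_{L^\infty(U_{\alpha_2})}\le\sup_{\partial U_{\alpha'}}|v|+C(n)\|g\|_{L^n(U_{\alpha'})}\le C(\alpha_1,n)\Big[(1+\|f\|_{L^n(U)})(\alpha_1-\alpha')^{c_1}+(\alpha_1-\alpha')^{-c_2}\delta+\|f\|_{L^n(U)}\Big].
\]
I would then choose $\alpha'$ with $\alpha_1-\alpha'=\delta^{1/(c_1+c_2)}$ to balance the two leading terms; the exponents produced by the above estimates work out so that $\gamma:=c_1/(c_1+c_2)\in(0,1)$ depends only on $n$ and $c_1+c_2=\tfrac{2n}{1+(n-1)\gamma}$, whence the admissibility $\alpha_1-\alpha'\le\alpha_1-\alpha_2$ is exactly the hypothesis $\delta\le(\alpha_1-\alpha_2)^{2n/(1+(n-1)\gamma)}$. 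With this choice both leading terms are $\le C(\alpha_1,n)(1+\|f\|_{L^n(U)})\delta^{\gamma}$, which together with the $L^n$ estimate from the first paragraph closes the proof.

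The hard part is precisely the estimate of $v$ near $\partial U_{\alpha_1}$: the right-hand side $g$ cannot be controlled on all of $U_{\alpha_1}$ because $h$ is only $W^{2,n}_{loc}$ there and $D^2h$ blows up at $\partial U_{\alpha_1}$, so one is forced onto an intermediate section $U_{\alpha'}$ and must pay for the boundary values $\sup_{\partial U_{\alpha'}}|v|$. This is possible only because the Caffarelli--Guti\'errez theory supplies an a priori H\"older modulus for $u$ (hence, via barriers, for $h$ up to $\partial U_{\alpha_1}$), and the whole argument hinges on balancing this boundary error against the interior blow-up of $D^2h$ through the optimal choice of $\alpha'$ — and it is this balancing that pins down $\gamma$ and the precise smallness exponent $\tfrac{2n}{1+(n-1)\gamma}$ appearing in the statement.
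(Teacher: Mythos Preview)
Your proposal is correct and follows essentially the same route as the paper: write the equation for $v=u-h$, use Caffarelli--Guti\'errez interior H\"older estimates for $u$, Pogorelov's estimates to make $\calL_w$ uniformly elliptic on $U_{\alpha_1}$ and hence obtain boundary H\"older continuity of $h$, bound $\|D^2h\|_{L^\infty(U_{\alpha'})}$ via interior estimates with a negative power of the collar width, apply ABP on $U_{\alpha'}$, and balance by choosing $\alpha_1-\alpha'=\delta^{1/(c_1+c_2)}$; your identification of $\gamma=c_1/(c_1+c_2)$ and of the smallness exponent $2n/(1+(n-1)\gamma)$ matches the paper's computation exactly. Two minor imprecisions worth tightening: the upper bound on the collar width $\dist(\partial U_{\alpha'},\partial U_{\alpha_1})\le C(\alpha_1-\alpha')$ is obtained in the paper by a direct convexity argument (not a gradient lower bound for $\phi$), and the lower bound $\dist(\partial U_{\alpha'},\partial U_{\alpha_1})\ge c_n(\alpha_1-\alpha')^n$ comes from Aleksandrov's estimate rather than Lemma~\ref{containment2}; these do not affect the structure of your argument.
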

\begin{proof}
Let $0<\alpha<\alpha_1$. We first claim that
\begin{equation}\label{boundary-distance}
\delta_1 := c_n (\alpha_1 - \alpha)^n\leq \dist(x ,\partial U_{\alpha_1}) \leq 
2n\min{\big\{1, \alpha^{-1}(\alpha_1 -\alpha)\big\}}=:\delta_2\quad \forall x\in \partial U_{\alpha}.
\end{equation}
To prove \eqref{boundary-distance}, let $x_0$ be the minimum point of $\phi$ in $U$. Then
$U_\alpha= S_\phi(x_0, -\alpha\phi(x_0))$, $U_{\alpha_1}= S_\phi(x_0, -\alpha_1\phi(x_0))$, and $C_1(n) \leq |\phi(x_0)|\leq C_2(n)$ by \cite[Proposition~3.2.3]{G}. For any $x\in \partial U_\alpha$, by applying Aleksandrov's estimate (see \cite[Theorem~1.4.2]{G}) to the function $\tilde\phi := \phi - (1-\alpha_1)\phi(x_0)$ we get $\dist(x,\partial U_{\alpha_1})^{1/n}\geq C_n |\tilde \phi(x)| = C_n (\alpha-\alpha_1)\phi(x_0)$ yielding the first inequality in \eqref{boundary-distance}. For the second inequality, let $x\in \partial U_\alpha$ and choose $y$ be such that $x= (1-\frac{\alpha}{\alpha_1}) x_0 + \frac{\alpha}{\alpha_1}y$. Then $y\not\in U_{\alpha_1}$ since whenever $y\in U$ we have $ (1-\alpha_1)\phi(x_0)\leq \phi(y)$ as
 $(1-\alpha)\phi(x_0)=\phi(x) \leq  (1-\frac{\alpha}{\alpha_1}) \phi(x_0) + \frac{\alpha}{\alpha_1}\phi(y)$ by the convexity of $\phi$. Therefore, we  infer that $\dist(x,\partial U_{\alpha_1})\leq |y-x|=(\frac{\alpha_1}{\alpha} -1) |x-x_0|\leq 2n (\frac{\alpha_1}{\alpha} -1)$ which gives the desired result.

By Caffarelli-Guti\'errez interior H\"older estimates (see \cite[estimate (2.2) and Corollary~2.6]{GN}) there exists $\beta\in (0, 1)$ depending only on $n$ such that
\begin{equation}\label{CG}
\|u\|_{C^\beta(\overline U_{\alpha_1})}\leq C(\alpha_1,n)\Big(1+ \|f\|_{L^n(U)} \Big).
\end{equation}
Next notice that  Pogorelov's estimates imply  that $\lambda(\alpha_1, n) I \leq \calW\leq \Lambda(\alpha_1, n) I$ in $U_{\alpha_1}$.  Therefore, by using standard boundary H\"older estimates for linear uniformly elliptic equations (see \cite[Corollary~9.29]{GiT} and \cite[Proposition~4.13]{CC}) and \eqref{CG}, we obtain
\begin{equation}\label{classical-boundary-est}
\|h\|_{C^{\beta/2}(\overline U_{\alpha_1})}\leq C'(\alpha_1,n)\|u\|_{C^{\beta}(\partial U_{\alpha_1})}
\leq
C(\alpha_1,n)\Big(1+ \|f\|_{L^n(U)} \Big).
\end{equation}

  Now for any $x\in \partial U_{\alpha}$, by \eqref{boundary-distance} we can take $y\in \partial U_{\alpha_1}$ such that $|x-y| \leq \delta_2$. Then since $u-h =0$ on $ \partial U_{\alpha_1}$, we get from \eqref{CG} and \eqref{classical-boundary-est} that
\begin{align*}
|(u-h)(x)| &= |(u-h)(x) - (u-h)(y)| \leq |u(x)-u(y)| + |h(x)-h(y)|\\
&\leq  C(\alpha_1,n) \, \delta_2^{\beta/2} \Big(1+ \|f\|_{L^n(U)} \Big).
\end{align*}
That is,
\begin{equation}\label{dest}
\|u-h\|_{L^\infty(\partial U_{\alpha}) } \leq C(\alpha_1,n) \, \delta_2^{\beta/2}\Big(1+ \|f\|_{L^n(U)} \Big).
\end{equation}
We claim that
\begin{equation}\label{2orderest}
\|D^2 h\|_{L^\infty(U_{\alpha}) } \leq C(\alpha_1,n) \,  \delta_1^{\frac{\beta}{2}-2} 
\Big(1+ \|f\|_{L^n(U)} \Big).
\end{equation}
Indeed, let $x_0\in U_{\alpha}$ be arbitrary and take $x_1\in \partial B_{\delta_1/2}(x_0)$. Since $B_{\delta_1/2}(x_0)\Subset U_{\alpha_1}$ by \eqref{boundary-distance} and
$\calW_{ij} D_{i j}(h- h(x_1))= \calW_{ij} D_{i j}h=0$ in $U_{\alpha_1}$, we can apply  interior $C^2$-estimates (see \cite[Theorem~2.7]{GN}) to $h-h(x_1)$ in $B_{\delta_1/2}(x_0)$  and obtain
\begin{equation*}
\|D^2 h(x_0)\| \leq C'(\alpha_1,n)\, \delta_1^{-2} \sup_{B_{\delta_1/2}(x_0)}{|h-h(x_1)|} \leq C(\alpha_1,n)\,  \delta_1^{-2} \delta_1^{\beta/2}
\Big(1+ \|f\|_{L^n(U)} \Big)
\end{equation*}
giving \eqref{2orderest}. 

Observe that $u-h\in W^{2,n}_{loc}(U)$ is a  solution of
\begin{align*}
\Phi_{ij} D_{i j}(u-h)=f - \Phi_{ij} D_{i j}h = f- [ \Phi_{i j} - \calW_{ij}] D_{i j}h =: F \quad \mbox{in}\quad  U_{\alpha_1}.
\end{align*}
Hence if we let $\eps:=\| \Phi - \calW\|_{L^n(U_{\alpha_1})}$, then it follows from the ABP estimate (see \cite[Theorem~2.4]{GN}), \eqref{dest} and \eqref{2orderest} that
\begin{align*}
&\|u-h\|_{L^\infty(U_{\alpha}) } +\|F\|_{L^n(U_{\alpha})}
\leq \|u-h\|_{L^\infty(\partial U_{\alpha}) }  + C_n \|F\|_{L^n(U_{\alpha_1})} \\
&\leq \|u-h\|_{L^\infty(\partial U_{\alpha}) } +C_n \|D^2 h\|_{L^\infty(U_{\alpha}) } \| \Phi - \calW\|_{L^n(U_{\alpha_1})}  + C_n \|f\|_{L^n(U_{\alpha_1})} \\ 
&\leq C(\alpha_1, n)  \Big[ \alpha^{-\beta/2} (\alpha_1 -\alpha)^{\beta/2}  + (\alpha_1 -\alpha)^{n(\frac{\beta}{2}-2)} \eps\Big]\Big(1+ \|f\|_{L^n(U)} \Big)    + C_n \|f\|_{L^n(U)}.
\end{align*}
By  taking $\alpha :=\alpha_1 - \eps^{\frac{2}{4n -(n-1)\beta}}$, this yields
\[\|u-h\|_{L^\infty(U_{\alpha}) } +\|F\|_{L^n(U_{\alpha})}
\leq C(\alpha_1,n) (\alpha^{-\beta/2} +1 ) \eps^{\gamma}\Big(1+ \|f\|_{L^n(U)} \Big) + C_n \|f\|_{L^n(U)}
\]
with $\gamma := \frac{\beta}{4n- (n-1) \beta}$.
From this we deduce the lemma as $\eps \leq (\alpha_1 -\alpha_2)^{\frac{2n}{1+ (n-1)\gamma}}$ by the assumption.
\end{proof}

\subsection{Improved density estimates}\label{sub:improveddensity}
In this subsection we will use 
Lemma~\ref{explest} to improve the power decay of $\mu (U_\alpha \setminus G_\beta(u,\Omega))$. To this end, the next  lemma plays an important role. 

\begin{lemma}\label{lm:acceleration}
Let $0<\epsilon<1/2$, $0<\alpha_0<1$, $U$ be a normalized convex domain  and $\Omega$ be a bounded convex set such that $U\subset \Omega$. Let $\phi\in  C^1(\Omega)\cap  W^{2,n}_{loc}(U)$ be  a convex function satisfying $1-\epsilon \leq \det D^2\phi \leq
1+\epsilon$ in $U$ and $\phi=0$ on $\partial U$.  Suppose $u\in C(\Omega)\cap W^{2,n}_{loc}(U)\cap C^1(U)$ is a solution of  
$\calL_\phi u=f$ in $U$ with  $|u|\leq 1$ in $U$ and $|u(x)|\leq C^* d(x, x_0)^2$ in $\Omega\setminus U$ for some $x_0\in U_{\alpha_0}$. 
Then for any $0< \alpha\leq \alpha_0$, there exist $C,\, \tau>0$ depending only on $\alpha$, $\alpha_0$ and $n$ such that
\begin{equation*}
|G_{N}(u,\Omega) \cap  U_{\alpha} |\geq \left\{1- C\big( N^{-\tau} \delta_0^\tau +\epsilon \big)\right\} \, | U_{\alpha}|
\end{equation*}
for any $N\geq N_0 = N_0(\alpha, \alpha_0, C^*, n)$ and provided that $\| \Phi - \calW\|_{L^n(U_{\frac{\alpha_0 +1}{2}})} \leq ((1 -\alpha_0)/4)^{\frac{2n}{1+ (n-1)\gamma}}$.  Here  $\calW,\, \gamma$ are from Lemma~\ref{explest} and
\[
\delta_0 := \Big(\fint_{U_{\frac{\alpha_0 +1}{2}}} \|\Phi -\calW\|^n ~dx\Big)^{\frac{\gamma}{ n}}
+ \Big(\fint_{U} |f|^n ~dx\Big)^{\frac{1}{ n}}. 
\]
\end{lemma}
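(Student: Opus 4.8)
The plan is to run one step of Caffarelli's perturbation scheme at the level of the quasi‑distance $d$: compare $u$ with the solution $h$ of the homogeneous equation $\calL_w h=0$ that has the same boundary data, observe that $h$ has bounded Hessian (hence is touched by paraboloids), and feed the difference $u-h$ — which is as small as $\delta_0$ by Lemma~\ref{explest} — into the crude power decay of Proposition~\ref{Initial-Estimate}. First I would dispose of the trivial case: if $\delta_0$ exceeds a small dimensional constant then $1-C(N^{-\tau}\delta_0^\tau+\epsilon)<0$ for $C$ large and there is nothing to prove, so we may assume $\delta_0$ small, in particular $\|f\|_{L^n(U)}\le C\delta_0$. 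Then I apply Lemma~\ref{explest} with $\alpha_1:=\tfrac{\alpha_0+1}{2}$ and $\alpha_2:=\alpha_0$ (admissible since $\alpha_1-\alpha_2=(1-\alpha_0)/2$ and the hypothesis controls $\|\Phi-\calW\|_{L^n(U_{\alpha_1})}$ accordingly), obtaining $h\in C^\infty(U_{\alpha_1})$ with $\calL_w h=0$, $h=u$ on $\partial U_{\alpha_1}$, $|h|\le 1$, and
\[
\|u-h\|_{L^\infty(U_{\alpha_0})}+\|F\|_{L^n(U_{\alpha_0})}\le C(\alpha_0,n)\,\delta_0,\qquad F:=\trace\big(\Phi\,D^2(u-h)\big).
\]
Since $\det D^2 w=1$, Pogorelov's estimates give $\lambda' I\le D^2w\le\Lambda' I$ on $U_{\alpha_0}$, so the interior $C^2$ bound in the proof of Lemma~\ref{explest} applies and yields $\|D^2h\|_{L^\infty(U_{\alpha_0})}\le C(\alpha_0,n)$.

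Next I would transfer the Euclidean second‑derivative control of $h$ into a bound in terms of $d$, and separately extract the gain $\delta_0$ from $u-h$. For the first, choose $s_0:=M^6$ with $M=M(\alpha,n)$ as in Theorem~\ref{thm:powerdecayMA}; a direct computation with that theorem gives $|U_\alpha\setminus D^\alpha_{s_0}|\le C(\alpha,n)\,\epsilon$. For $\bar x\in U_\alpha\cap D^\alpha_{s_0}$, the defining property of $D^\alpha_{s_0}$ gives $|x-\bar x|^2\le s_0^2\, d(x,\bar x)^2$ whenever $d(x,\bar x)^2\le\eta_0$, while for the remaining $x\in U_{\alpha_0}$ one has $d(x,\bar x)^2>\eta_0$ together with $|h(x)-h(\bar x)-\nabla h(\bar x)\cdot(x-\bar x)|\le C(\alpha_0,n)$; combining the two ranges with Taylor's formula for $h$ produces $K_0=K_0(\alpha,\alpha_0,n)$ with
\[
|h(x)-h(\bar x)-\nabla h(\bar x)\cdot(x-\bar x)|\le K_0\, d(x,\bar x)^2\qquad\text{for all } x\in U_{\alpha_0}.
\]
For the second, rescale $U_{\alpha_0}$ to a normalized domain and apply Proposition~\ref{Initial-Estimate} to (a multiple of) $v:=(u-h)/(C(\alpha_0,n)\delta_0)$ with the ambient set taken to be $U_{\alpha_0}$ itself — here $|v|\le1$ and the rescaled right‑hand side has controlled $L^n$ norm — and undo the scalings using the relations $\tilde d(\cdot,\cdot)^2=t_0^{-1}d(T^{-1}\cdot,T^{-1}\cdot)^2$, $(U_{\alpha_0})_{\alpha_0'}=U_{\alpha_0'\alpha_0}$; this gives, with $\alpha_0':=\tfrac12(\alpha/\alpha_0+1)$ so that $U_\alpha\subset U_{\alpha_0'\alpha_0}\subset U_{\alpha_0}$,
\[
\mu\big(U_{\alpha_0'\alpha_0}\setminus G^{0}_{N-K_0}(u-h)\big)\le C(\alpha,\alpha_0,n)\,\delta_0^\tau N^{-\tau},\qquad \tau=\tau(\alpha,\alpha_0,n)>0,
\]
for $N\ge N_1(\alpha,\alpha_0,n)$, where $G^{0}_\beta$ denotes the version of $G_\beta$ in which the paraboloid inequalities are imposed only for $x\in U_{\alpha_0}$.

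To assemble these, I use the elementary inclusion $G^{0,+}_a(h)\cap G^{0,+}_b(u-h)\subset G^{0,+}_{a+b}(u)$ (and likewise with $-$), which together with the two displays shows that every $\bar x\in U_\alpha\cap D^\alpha_{s_0}\cap G^{0}_{N-K_0}(u-h)$ has $u$ touched at $\bar x$ from above and below by $N\,d(\cdot,\bar x)^2$ for all $x\in U_{\alpha_0}$. For $x\in\Omega\setminus U_{\alpha_0}$ one uses $d(x,\bar x)^2\ge C_0(\alpha_0-\alpha)^\gamma>0$ (Lemma~\ref{containment2}), the bounds $|u|\le1$ on $U\setminus U_{\alpha_0}$ and $|u(x)|\le C^*d(x,x_0)^2$ on $\Omega\setminus U$, a gradient bound for $u$ on the good set (which follows from the touching property together with $\|u-h\|_{L^\infty(U_{\alpha_0})}\le C\delta_0$ and the bound on $\nabla h$), and the quasi‑triangle inequality for $d$, to see that the paraboloid inequalities persist there for every such $\bar x$ once $N\ge N_0(\alpha,\alpha_0,C^*,n)$; hence $\bar x\in G_N(u,\Omega)$. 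Therefore, for $N\ge N_0$,
\[
U_\alpha\setminus G_N(u,\Omega)\subset\big(U_\alpha\setminus D^\alpha_{s_0}\big)\cup\big(U_\alpha\setminus G^{0}_{N-K_0}(u-h)\big),
\]
and estimating the first set by $C\epsilon$, the second by $C\delta_0^\tau N^{-\tau}$ (converting $\mu$ to Lebesgue measure at cost $1/(1-\epsilon)\le2$), and dividing by $|U_\alpha|\ge c(\alpha,n)>0$ gives the claimed bound.

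The main obstacle is the interface between the interior, Euclidean character of the approximation (Lemma~\ref{explest} controls $u-h$ in $L^\infty$ and $h$ in $C^2$ only inside $U_{\alpha_0}$, in terms of $|x-\bar x|^2$) and the global, $d$‑quasi‑distance character of the sets $G_N(u,\Omega)$: one must upgrade the Hessian bound for $h$ to a bound by $d(\cdot,\bar x)^2$, which forces the introduction of the sets $D^\alpha_{s_0}$ and the use of Caffarelli's power decay for the Monge--Amp\`ere equation (Theorem~\ref{thm:powerdecayMA}) and is exactly where the additive $\epsilon$ in the conclusion originates, and one must also control the far region $\Omega\setminus U_{\alpha_0}$ purely from the growth hypothesis. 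Once these two points are handled, the improvement of the exponent — that the decay now carries the small factor $\delta_0^\tau$ — is a direct consequence of feeding the smallness $\|u-h\|_{L^\infty}\lesssim\delta_0$ of Lemma~\ref{explest} into Proposition~\ref{Initial-Estimate}.
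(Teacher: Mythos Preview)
Your outline is essentially the paper's: compare $u$ with the solution $h$ of $\calL_w h=0$ via Lemma~\ref{explest}, transfer the Euclidean $C^{1,1}$ bound on $h$ into a $d$-paraboloid bound at points of a good set for $\phi$ (you use $D^\alpha_{s_0}$ and Theorem~\ref{thm:powerdecayMA}; the paper uses the equivalent device $A_{\sigma(\alpha)}$ and \cite[Theorem~6.1.1]{G}, which is where the additive $\epsilon$ comes from), and feed $(u-h)/\delta_0$ into Proposition~\ref{Initial-Estimate} to produce the factor $\delta_0^\tau$. The one substantive execution difference is that you apply Proposition~\ref{Initial-Estimate} with ambient set $U_{\alpha_0}$, obtaining only the local set $G^0$, and then propagate the paraboloid inequalities for $u$ to $\Omega\setminus U_{\alpha_0}$ by hand.

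That propagation step is where your argument is incomplete. You assert a gradient bound for $u$ at a good point $\bar x$ from ``the touching property together with $\|u-h\|_{L^\infty(U_{\alpha_0})}\le C\delta_0$ and the bound on $\nabla h$''. But the touching inequality in $G^0_{N-K_0}(u-h)$ combined with the $L^\infty$ bound yields only $|\nabla(u-h)(\bar x)|\le C\delta_0/r+(N-K_0)\sup_{|x-\bar x|=r}d(x,\bar x)^2/r$, which a priori grows with $N$; absorbing $|\nabla u(\bar x)|\,|x-\bar x|$ into $N\,d(x,\bar x)^2$ in the far region then requires an extra smallness argument (choosing $r$ using the $C^{1,\alpha}$ regularity of $\phi$ so that the coefficient of $N$ is small relative to the constant in $d(x,\bar x)^2\ge c|x-\bar x|$) that you do not spell out. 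The paper sidesteps this entirely with a one-line trick: extend $h$ continuously from $U_{(3\alpha_0+1)/4}$ to all of $\Omega$ so that $h=u$ on $\Omega\setminus U_{(\alpha_0+1)/2}$ and $\|u-h\|_{L^\infty(\Omega)}=\|u-h\|_{L^\infty(U_{(3\alpha_0+1)/4})}$. Then Proposition~\ref{Initial-Estimate} applies with the full ambient $\Omega$, giving the global $G_N(u-h,\Omega)$ directly, and the inclusion $U_\alpha\cap A_{\sigma(\alpha)}\subset G_N(h,\Omega)$ for the extended $h$ needs only the universal bound $|\nabla h(\bar x)|\le c_e(\alpha_0,n)$, independent of $N$.
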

\begin{proof}
Let $h\in W^{2,n}_{loc}(U_{\frac{\alpha_0 +1}{2}})\cap C(\overline{ U_{\frac{\alpha_0 +1}{2}}})$ be the solution of
\begin{equation*}
\left\{\begin{array}{rl}
\calW_{i j} D_{i j}h  &= 0 \qquad \mbox{ in } \quad U_{\frac{\alpha_0 +1}{2}}\\
h  &=u\ \ \ \ \ \ \ \ \mbox{ on } \quad\partial U_{\frac{\alpha_0 +1}{2}}.
\end{array}\right.
\end{equation*}
By the interior $C^{1,1}$ regularity of $h$ and Lemma~\ref{explest}, we have 
\begin{align}
&\|h\|_{C^{1,1}(U_{\frac{3\alpha_0 +1}{4}})}\leq c_e(\alpha_0,n) \|u\|_{L^\infty( U_{\frac{\alpha_0 +1}{2}})}\leq c_e(\alpha_0,n),\label{regularity-alpha}\\
&\|u-h\|_{L^\infty(U_{\frac{3\alpha_0 +1}{4}})} + \| f-\trace([\Phi-\calW] D^2h)\|_{L^n(U_{\frac{3\alpha_0 +1}{4}})}\leq C(\alpha_0,n)\,\delta_0 =:\delta_0'.\label{closeness-alpha}
\end{align}
We now consider $h|_{U_{\frac{3\alpha_0 +1}{4}}}$ and then extend $h$ outside $U_{\frac{3\alpha_0 +1}{4}}$ continuously such that
\begin{equation*}
\left\{\begin{array}{rl}
&h(x)  = u(x) \quad \forall  x\in \Omega\setminus U_{\frac{\alpha_0 +1}{2}},\\
&\|u-h\|_{L^\infty(\Omega)}  =\|u-h\|_{L^\infty(U_{(3\alpha_0 +1)/4})}.
\end{array}
\right.
\end{equation*}
Since by the maximum principle $\|h\|_{L^\infty(U_{\frac{3\alpha_0 +1}{4}})} \leq \|u\|_{L^\infty(U)}\leq 1$, we then obtain that 
\begin{equation}\label{h-above-below}
u(x) -2 \leq h(x) \leq u(x) +2 \quad \mbox{for all}\quad x\in \Omega.
\end{equation}
We claim that if $N\geq N_0 :=N_0(\alpha, \alpha_0, n)$, then
\begin{equation}\label{h-is-good}
U_{\alpha}\cap A_{\sigma(\alpha)}\subset G_N(h,\Omega)
\end{equation}
 where $\sigma(\alpha)>0$ is the constant given by \cite[Theorem~6.1.1]{G} and 
\[
A_{\sigma(\alpha)} := \left\{\tilde x\in U: \phi(x)\geq
\phi(\tilde x) + \nabla \phi(\tilde x)\cdot (x-\tilde x) + \frac{\sigma(\alpha)}{2}\, |x-\tilde x|^2, \quad \forall x\in U\right\}.
\]
Indeed, let $\bar x\in U_{\alpha} \cap A_{\sigma(\alpha)}\subset U_{\alpha_0}$.
By \eqref{regularity-alpha} we have  $ \left | h(x) -[ h (\bar x) +
\nabla  h (\bar x)\cdot ( x- \bar x)]\right|\leq c_e(\alpha_0,n) |x- \bar x|^2$
for all $x\in U_{\frac{3\alpha_0 +1}{4}}$, and since 
$\bar x\in  A_{\sigma(\alpha)}$ 
\begin{equation}\label{d-below}
d(x, \bar x)^2 = \phi(x) -[\phi(\bar x) +\nabla \phi(\bar x)\cdot( x-\bar x)]
\geq \frac{\sigma(\alpha)}{2} |x-\bar x|^2\quad\forall x\in \overline{U}.
\end{equation}
Therefore
\begin{equation}\label{h-step2}
\big| h(x) -[ h (\bar x) +\nabla h (\bar x)\cdot ( x- \bar x)] \big|\leq \frac{2 c_e(\alpha_0,n)}{\sigma(\alpha)} d(x, \bar x)^2\quad\forall x \in U_{\frac{3\alpha_0 +1}{4}}.
\end{equation}
We next show that by increasing the constant on the right hand side of \eqref{h-step2}, that the resulting inequality holds for all $x$ in $\Omega$. 
To see this, observe that
\begin{align}\label{d-via-d}
d(x, x_0)^2  &= d(x, \bar x)^2 
+ [\phi(\bar x) -\phi(x_0) - \nabla \phi(x_0)\cdot (\bar x -x_0) ]\\
& \qquad \qquad \, +  [\nabla \phi(\bar x) -\nabla \phi(x_0)]\cdot (x-\bar x) \nonumber\\
&\leq d(x, \bar x)^2 + C(\alpha_0, n)\,  (1 +  |x -\bar x| ) \qquad  \mbox{for all }\quad x\in\Omega.\nonumber
\end{align}
Also there exists $c(\alpha,n)>0$ such that 
\begin{equation}\label{d-below-linear}
 d(x, \bar x)^2 \geq c(\alpha,n) \, |x-\bar x| \quad \forall x\in \overline{\Omega}\setminus U.
\end{equation}
Notice that $\dist (U_\alpha,\partial U)\geq c_n (1-\alpha)^n$ by the Aleksandrov estimate \cite[Theorem~1.4.2]{G} and \cite[Proposition~3.2.3]{G}.  Thus it follows from   \eqref{d-below}  and the fact $\bar x\in U_{\alpha}$ that there is $c=c(\alpha,n)>0$ so that \eqref{d-below-linear} holds for all $x\in \partial U$. Now for $x\in \overline{\Omega}\setminus \overline{U}$ we can choose $\hat x \in \partial U$ and $\lambda\in (0,1)$ satisfying $\hat x = \lambda x + (1-\lambda) \bar x$. Then since $d(\hat x, \bar x)^2 \geq c |\hat x -\bar x|$ and the function $z\mapsto d(z,\bar x)^2$ is convex, we obtain 
\[
\lambda d(x, \bar x)^2 + (1-\lambda) d(\bar x, \bar x)^2
\geq c |\lambda x + (1-\lambda) \bar x -\bar x| = c\lambda |x-\bar x|
\]
which gives $d(x, \bar x)^2 \geq c |x-\bar x|$ and hence \eqref{d-below-linear} is proved.

We are ready to show that \eqref{h-step2} holds for all $x\in\Omega$ but with a bigger constant on the right hand side. Let $x\in \Omega \setminus U_{\frac{3\alpha_0 +1}{4}}$ and consider the following cases:

{\bf Case 1:} $x\in U$. Then by using \eqref{regularity-alpha}, \eqref{h-above-below}  and the assumption $|u|\leq 1$ in $U$, we have
\begin{align*}
\left | h(x) -[ h (\bar x) +
\nabla  h (\bar x) \cdot (x- \bar x) ]\right|
&\leq |h(x) - h(\bar x)| +C(\alpha_0,n)
\leq |u(x) - u(\bar x)| +C(\alpha_0,n)\\ &\leq C(\alpha_0,n)
\leq C_1(\alpha_0,n) \, d(x, \bar x)^2
\end{align*}
where in the last inequality we have used the fact that since $\bar x\in U_{\alpha}\subset U_{\alpha_0}$ there exists $\eta( \alpha_0)>0$ such that $S_\phi(\bar x, \eta(\alpha_0))\subset U_{\frac{3\alpha_0 +1}{4}}$ (see Lemma~\ref{containment2}).

{\bf Case 2:} $x\in \Omega\setminus  U$. Then
$d(x,\bar x)^2\geq \eta_0$ since $S_\phi(\bar x, \eta_0)\Subset U$ by Lemma~\ref{containment2}. This together with the assumptions, 
\eqref{regularity-alpha}, \eqref{h-above-below}, \eqref{d-via-d} and \eqref{d-below-linear} gives 
\begin{align*}
&\left | h(x) -[ h (\bar x) +
\nabla  h (\bar x) \cdot (x- \bar x) ]\right|
\leq |h(x) - h(\bar x)| + C(\alpha_0,n) |x-\bar x|\\
&\leq |u(x)| + C(\alpha_0,n)\, ( |x-\bar x|+ 1)
\leq  C^*\,d(x, x_0)^2 +C(\alpha_0,n)\, ( |x-\bar x|+ 1) \\
&\leq   C^*\,d(x, \bar x)^2 +C(\alpha_0,n)\, ( |x-\bar x|+ 1)
\leq C_2(\alpha,\alpha_0, C^*, n) \, d(x, \bar x)^2.
\end{align*}
Therefore if we choose 
\[N_0 := \max{\Big\{\frac{ 2 c_e(\alpha_0,n)}{\sigma(\alpha)}, C_1(\alpha_0,n), C_2(\alpha,\alpha_0, C^*,n) \Big\}},
\]
then it follows from the above considerations and \eqref{h-step2} that 
\[
\left | h(x) -[ h (\bar x) +
\nabla  h (\bar x) \cdot (x- \bar x) ]\right|\leq N_0 \, d(x, \bar x)^2
\quad\mbox{for all}\quad x \in  \Omega.
\]
This means $\bar x\in G_{N_0}(h,\Omega)\subset G_{N}(h,\Omega)$ for all $N\geq N_0$. Thus claim \eqref{h-is-good} is proved.

Next let
\[
u'(x) := \frac{(u-h)(x)}{\delta_0'}, \quad \mbox{for}\quad x\in\Omega.
\]
We infer from \eqref{closeness-alpha} and the way $h$ was initially defined and extended that
\begin{align*}
& \|u'\|_{L^\infty(\Omega)} =\frac{1}{\delta_0'} \|u-h\|_{L^\infty(U_{\frac{3\alpha_0 +1}{4}})}\leq 1,\\
& \calL_\phi u' =\frac{1}{\delta_0'} [\calL_\phi u -\calL_\phi h ]
 =\frac{1}{\delta_0'} \Big[f-\trace([\Phi -\calW] D^2 h) \Big]
=: f'(x) 
\qquad \mbox{ in}\quad U_{\frac{3\alpha_0 +1}{4}}.
\end{align*}
Notice that $\|f'\|_{L^n(U_{\frac{3\alpha_0 +1}{4}})}\leq 1$ by \eqref{closeness-alpha}. In order to apply Proposition~\ref{Initial-Estimate}, let $T$ be an invertible affine map normalizing $U_{\frac{3\alpha_0 +1}{4}}$. We have
$C(n)\leq |\det T|\leq C'(n)$ because
 $|\det T|^{-2/n}\approx |U_{\frac{3\alpha_0 +1}{4}}|^{2/n}\approx \frac{3\alpha_0 +1}{4} |\min_{U}{\phi}|\approx 1$. Set $\tilde U := T(U_{\frac{3\alpha_0 +1}{4}})$,
$\tilde\Omega :=T(\Omega)$ and define 
\begin{align*}
\tilde \phi(y)=
|\det T|^{2/n}\left[
\phi(T^{-1}y) - \big(1-\frac{3\alpha_0 +1}{4}\big)\min_{U}{\phi}\right],\quad 
 \tilde u(y)&=  u'(T^{-1}y)\quad \mbox{for } y\in \tilde\Omega.
 \end{align*}
 Then $1-\epsilon \leq \det D^2\tilde\phi = \det  D^2\phi(T^{-1} y) \leq  1+\epsilon$ in $\tilde U$ and $\tilde \phi=0$ on $\partial \tilde U$.
 Moreover since $\tilde \Phi(y)=|\det T|^{-2/n}
\det D^2\phi(T^{-1}y)  ~T [D^2\phi(T^{-1}y)]^{-1} T^t$, we obtain
\begin{align*}
&\trace\big(\tilde\Phi(y) D^2\tilde u(y)\big)= 
|\det T|^{\frac{-2}{n}}\trace\big(\Phi(T^{-1}y) D^2 u'(T^{-1}y)\big) = |\det T|^{\frac{-2}{n}} f'(T^{-1}y) =: \tilde f(y)
\end{align*}
in $\tilde U$. Thus as 
$\tilde\phi\in  C^1(\tilde\Omega)\cap  W^{2,n}(\tilde U)$,
$\tilde u \in C(\tilde\Omega)\cap W^{2,n}(\tilde U)\cap C^1(\tilde U)$, $\|\tilde u \|_{L^\infty(\tilde \Omega)} =\|u' \|_{L^\infty(\Omega)}\leq 1$ and
$\|\tilde f\|_{L^n(\tilde U)}= |\det T|^{-1/n} \|f'\|_{L^n(U_{\frac{3\alpha_0 +1}{4}})}\leq |\det T|^{-1/n} \leq C_n$,
we can  apply Proposition~\ref{Initial-Estimate} to get
\begin{equation}\label{normalized-est}
| \tilde U_{\frac{4 \alpha}{3\alpha_0 +1}}\setminus G_{\frac{N}{\delta_0'} |\det T|^{\frac{-2}{n}}}(\tilde u,\tilde \Omega)|
\leq C(\alpha,\alpha_0,n) \left(\frac{\delta_0}{N} \right)^{\tau}, 
\end{equation}
where $\tau>0$ depends only on $\alpha$, $\alpha_0$ and $n$. Since  $\tilde U_{\frac{4 \alpha}{3\alpha_0 +1}} = T(U_\alpha)$ 
and  
\begin{equation*}
\tilde d(T x,T \bar x)^2
= \tilde\phi(Tx) -\tilde\phi(T\bar x) -\langle \nabla \tilde\phi (T\bar x), Tx - T\bar x \rangle
 = |\det T|^{\frac{2}{n}} \, d(x, \bar x)^2\quad \forall x,\,\bar x\in\Omega,
\end{equation*}
we have
\begin{align}\label{G-transformation}
&G_{\beta}(\tilde u, \tilde \Omega) \cap \tilde U_{\frac{4 \alpha}{3\alpha_0 +1}}\\
&= T \left\{  \bar x\in U_\alpha: 
\left |\tilde u(y) -[\tilde u (T \bar x) +\langle
\nabla \tilde u (T \bar x), y- T \bar x \rangle]\right|\leq \beta \tilde d(y,T \bar x)^2 \quad \forall y\in T(\Omega)\right\}\nonumber\\
&= T \left\{  \bar x\in U_\alpha: 
\left |u'(x) -[u' (\bar x) +\langle
\nabla u' ( \bar x), x-  \bar x \rangle]\right|\leq \beta |\det T|^{\frac{2}{n}}  d( x, \bar x)^2 \quad \forall x\in \Omega\right\}\nonumber\\
&= T \Big( G_{\beta |\det T|^{\frac{2}{n}}}(u',\Omega) \cap U_\alpha\Big).\nonumber
\end{align}
It follows from \eqref{normalized-est}, \eqref{G-transformation} and the fact $T(A)\setminus T(B) =T(A\setminus B)$ that
\begin{equation*}
| U_{\alpha}\setminus G_{\frac{N}{\delta_0'}}(u',\Omega)|
\leq C(\alpha,\alpha_0,n) \left(\frac{\delta_0}{N} \right)^{\tau}.
\end{equation*}
As 
$ G_{\frac{N}{\delta_0'}}(u',\Omega) =  G_{N}(u-h,\Omega)$ and $|U_\alpha|\geq c_n \alpha^{n/2}$, we then conclude 
\begin{align*}
| U_{\alpha}| - |G_{N}(u -h,\Omega) \cap  U_{\alpha}|=
| U_{\alpha}\setminus G_{N}(u -h,\Omega)| \leq 
 C \left(\frac{\delta_0}{N} \right)^{\tau}  ~ |U_{\alpha}|
\end{align*}
yielding
\begin{align*}
 \left\{ 1- 
 C  \big(\frac{\delta_0}{N} \big)^{\tau}  \right\}~ |U_{\alpha}|
 &\leq |G_{N}(u -h,\Omega) \cap  U_{\alpha}|\\
 & \leq |G_{N}(u -h,\Omega) \cap  U_{\alpha}\cap A_{\sigma(\alpha)}| +\left|U_{\alpha}\setminus A_{\sigma(\alpha)}\right|\\
 & \leq |G_{N}(u -h,\Omega) \cap  U_{\alpha}\cap A_{\sigma(\alpha)}| + C(\alpha,n) \, \epsilon  \, |U_{\alpha}|,
\end{align*}
where the last inequality is from \cite[Theorem~6.1.1]{G}. Consequently,
\begin{equation}\label{Est-12}
 |G_{N}(u -h,\Omega) \cap  U_{\alpha}\cap A_{\sigma(\alpha)}| \geq \left\{ 1- 
 C \Big[ \big(\frac{\delta_0}{N} \big)^{\tau} + \epsilon\Big] \right\}~ |U_{\alpha}|.
\end{equation}
We claim that
\begin{equation}\label{h-via-u}
G_{N}(u -h,\Omega) \cap  U_{\alpha}\cap A_{\sigma(\alpha)} \subset 
G_{2 N}(u,\Omega ) \cap  U_{\alpha}
\end{equation}
which together with \eqref{Est-12}  gives the conclusion of the lemma. To prove the claim, let 
 $\bar x \in G_{N}(u -h,\Omega) \cap U_{\alpha}\cap A_{\sigma(\alpha)}$. Then
$\bar x \in G_{N}(u -h,\Omega) \cap  G_N(h,\Omega)$ by \eqref{h-is-good}. 
Therefore \eqref{h-via-u} holds because
\begin{align*}
&\left | u(x) - [u(\bar x) +\langle \nabla u(\bar x), x-\bar x\rangle]\right| \\
&\leq \left | (u-h)(x) - [(u-h)(\bar x) +\langle \nabla (u-h)(\bar x), x-\bar x\rangle] \right| + \left| h(x) - [h(\bar x) +\langle\nabla h(\bar x), x-\bar x\rangle]\right| \\
&\leq 2N d(x,\bar x)^2 \qquad \mbox{for all}\quad x\in \Omega.
\end{align*}
This completes the proof of the lemma.
\end{proof}

By using Lemma~\ref{lm:acceleration} and a localization process, we shall prove the following.

\begin{lemma}\label{lm:improved-density-I}
Let $0<\epsilon_0<1$, $0<\alpha_0<1$, $\Omega$ be a normalized convex domain and 
$u\in W^{2,n}_{loc}(\Omega)\cap C^1(\Omega)$ be a solution of
$\calL_\phi u=f$ in $\Omega$ with $\|u\|_{L^\infty(\Omega)}\leq 1$, where  
 $\phi\in C(\overline\Omega)$ is a convex function satisfying $\phi=0$ on $\partial \Omega$. 
There exists $\epsilon>0$ depending only on $\epsilon_0$, $\alpha_0$ and $n$ such that 
if $1-\epsilon \leq \det D^2\phi \leq
1+\epsilon$ in $\Omega$, then 
for any $S_\phi(x_0, \frac{t_0}{\alpha_0}) \subset \Omega_{\frac{\alpha_0 +1}{2}}$  we have
\begin{equation}\label{localized-est}
 \left| G_{\frac{N}{t_0}}(u,\Omega) \cap S_\phi(x_0, t_0)\right|
 \geq \left\{1-\epsilon_0 -C \big(\frac{t_0}{N}\big)^\tau  \Big(\fint_{S_\phi(x_0, \frac{t_0}{\alpha_0})} | f|^n ~dx\Big)^{\frac{\tau}{n}} \right\}~ \left|S_\phi(x_0, t_0)\right|
 \end{equation}
for every  $N\geq N_0$. Here $C$, $\tau$ and $N_0$ are positive constants depending only on $\alpha_0$ and $n$.
\end{lemma}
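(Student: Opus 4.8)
The plan is to prove Lemma~\ref{lm:improved-density-I} by a rescaling argument that reduces it to Lemma~\ref{lm:acceleration}. The key point is that Lemma~\ref{lm:acceleration} is invariant under the affine normalization of a section of $\phi$, and that a section $S_\phi(x_0,t_0)$ becomes an interior sub-level set of the normalized picture. First I would fix an invertible affine map $T$ normalizing $U:=S_\phi(x_0,\frac{t_0}{\alpha_0})$ and set, for $y\in\tilde\Omega:=T(\Omega)$,
\[
\tilde\phi(y):=\frac{\alpha_0}{t_0}\Big[\phi(T^{-1}y)-\phi(x_0)-\nabla\phi(x_0)\cdot(T^{-1}y-x_0)-\frac{t_0}{\alpha_0}\Big],\qquad \tilde u(y):=u(T^{-1}y).
\]
Then $\tilde U:=T(U)$ is a normalized convex domain, $\tilde\phi=0$ on $\partial\tilde U$, $\tilde\phi$ attains over $\tilde U$ its minimum value $-1$ at $\tilde x_0:=Tx_0$, and $1-\epsilon\le\det D^2\tilde\phi\le1+\epsilon$ in $\tilde U$. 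A direct computation gives the rescaling identities
\[
\tilde d(Tx,T\bar x)^2=\frac{\alpha_0}{t_0}\,d(x,\bar x)^2,\qquad \trace\big(\tilde\Phi(y)\,D^2\tilde u(y)\big)=\tilde f(y):=c_T\,f(T^{-1}y)\quad\text{in }\tilde U,
\]
where $\tilde\Phi,\tilde d,\tilde U_\alpha$ are formed from $\tilde\phi$ as $\Phi,d,U_\alpha$ are from $\phi$, and $c_T=(\alpha_0/t_0)^{n-1}|\det T|^{-2}$; moreover $S_\phi(x_0,\frac{\alpha t_0}{\alpha_0})=T^{-1}(\tilde U_\alpha)$ for $0<\alpha<1$, and by the first identity $G_\beta(\tilde u,\tilde\Omega)=T\big(G_{\beta\alpha_0/t_0}(u,\Omega)\big)$. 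Taking $\alpha=\alpha_0$, $\beta=N/\alpha_0$ and using the volume bound $|\det T|\approx (t_0/\alpha_0)^{-n/2}$ for sections, the claimed estimate \eqref{localized-est} becomes equivalent to
\[
|G_{N/\alpha_0}(\tilde u,\tilde\Omega)\cap\tilde U_{\alpha_0}|\ge\Big\{1-\epsilon_0-C\big(\tfrac{t_0}{N}\big)^\tau\big(\fint_U|f|^n\,dx\big)^{\tau/n}\Big\}\,|\tilde U_{\alpha_0}|.
\]

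The next step is to verify that $\tilde\phi,\tilde u$ satisfy all the hypotheses of Lemma~\ref{lm:acceleration} (with $U\rightsquigarrow\tilde U$, $\Omega\rightsquigarrow\tilde\Omega$, $\phi\rightsquigarrow\tilde\phi$, $u\rightsquigarrow\tilde u$, $x_0\rightsquigarrow\tilde x_0$, both its parameters $\alpha$ and $\alpha_0$ replaced by the $\alpha_0$ of the present lemma, and $N\rightsquigarrow N/\alpha_0$). Since $\det D^2\phi$ is $L^\infty$-close to $1$, Caffarelli's interior regularity gives $\phi\in C^1(\Omega)$ and, once $\epsilon$ is small depending on $n$, $\phi\in W^{2,n}_{loc}(\Omega)$; because $S_\phi(x_0,\frac{t_0}{\alpha_0})\subset\Omega_{(\alpha_0+1)/2}\Subset\Omega$ this transfers to $\tilde\phi\in C^1(\tilde\Omega)\cap W^{2,n}_{loc}(\tilde U)$ and $\tilde u\in C(\tilde\Omega)\cap W^{2,n}_{loc}(\tilde U)\cap C^1(\tilde U)$, with $\|\tilde u\|_{L^\infty(\tilde\Omega)}=\|u\|_{L^\infty(\Omega)}\le1$. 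The quadratic-growth hypothesis holds with $C^*=1$: for $y\in\tilde\Omega\setminus\tilde U$ one has $\tilde\phi\ge0$, hence $\tilde d(y,\tilde x_0)^2=\tilde\phi(y)-\tilde\phi(\tilde x_0)=\tilde\phi(y)+1\ge1\ge|\tilde u(y)|$, and $\tilde x_0$, being the minimum point, lies in every $\tilde U_\alpha$. Finally, the closeness requirement concerns $\|\tilde\Phi-\tilde{\calW}\|_{L^n(\tilde U_{(\alpha_0+1)/2})}$, where $\tilde{\calW}$ is the cofactor matrix of the solution $\tilde w$ of $\det D^2\tilde w=1$ in $\tilde U$, $\tilde w=0$ on $\partial\tilde U$; since $\tilde U$ is normalized and $\|\det D^2\tilde\phi-\det D^2\tilde w\|_{L^\infty(\tilde U)}=\|\det D^2\tilde\phi-1\|_\infty\le\epsilon$, the $L^p$-stability of cofactor matrices from \cite{GN} together with Pogorelov's bound $\lambda(\alpha_0,n)I\le\tilde{\calW}\le\Lambda(\alpha_0,n)I$ on $\tilde U_{(\alpha_0+1)/2}$ yields $\|\tilde\Phi-\tilde{\calW}\|_{L^n(\tilde U_{(\alpha_0+1)/2})}\le\omega(\epsilon)$ for a modulus $\omega$ with $\omega(0^+)=0$. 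Hence both the smallness constraint of Lemma~\ref{lm:acceleration} holds and the corresponding quantity $\tilde\delta_0$ obeys $\tilde\delta_0\le C(\alpha_0,n)\big(\omega(\epsilon)+\frac{t_0}{\alpha_0}(\fint_U|f|^n\,dx)^{1/n}\big)$, provided $\epsilon$ is small depending on $\alpha_0$ and $n$.

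Applying Lemma~\ref{lm:acceleration} then gives, for $N\ge N_0(\alpha_0,n)$,
\[
|G_{N/\alpha_0}(\tilde u,\tilde\Omega)\cap\tilde U_{\alpha_0}|\ge\Big\{1-C\big((N/\alpha_0)^{-\tau}\tilde\delta_0^{\,\tau}+\epsilon\big)\Big\}\,|\tilde U_{\alpha_0}|,
\]
with $C,\tau>0$ depending only on $\alpha_0$ and $n$. It remains to absorb the error terms: using the bound on $\tilde\delta_0$ and $N\ge N_0\ge1$ one gets $(N/\alpha_0)^{-\tau}\tilde\delta_0^{\,\tau}\le C\big(\omega(\epsilon)^\tau+(\frac{t_0}{N})^\tau(\fint_U|f|^n\,dx)^{\tau/n}\big)$, and choosing $\epsilon$ small enough depending on $\epsilon_0,\alpha_0,n$ so that $C(\omega(\epsilon)^\tau+\epsilon)\le\epsilon_0$ turns the last display into the equivalent form of \eqref{localized-est} recorded above; transforming back through $T$, which preserves ratios of Lebesgue measures, completes the argument.

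The main obstacle is the verification in the second step: one must check that the affine rescaling preserves \emph{every} hypothesis of Lemma~\ref{lm:acceleration}, and in particular that $\|\tilde\Phi-\tilde{\calW}\|_{L^n}$ is small on the normalized section — which rests on the $L^p$-stability of cofactor matrices under $L^\infty$-perturbations of the Monge--Amp\`ere measure \cite{GN} and on Pogorelov's estimates for the normalized solution $\tilde w$. By contrast the growth condition for $\tilde u$ outside $\tilde U$, which superficially looks like the delicate point, is automatic here, since the rescaled quasi-distance to the center is bounded below by $1$ outside the normalized section while $|\tilde u|\le1$ throughout $\tilde\Omega$.
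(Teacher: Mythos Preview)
Your approach is essentially identical to the paper's: normalize the section $S_\phi(x_0,t_0/\alpha_0)$ by an affine map $T$, define rescaled $\tilde\phi$ and $\tilde u$, verify the hypotheses of Lemma~\ref{lm:acceleration}, apply it with $\alpha=\alpha_0$, and transform back. Your explicit verification of the quadratic growth condition $|\tilde u|\le C^*\,\tilde d(\cdot,\tilde x_0)^2$ on $\tilde\Omega\setminus\tilde U$ (via $\tilde d(y,\tilde x_0)^2\ge 1$ there while $|\tilde u|\le 1$) is in fact more detailed than the paper, which applies Lemma~\ref{lm:acceleration} without spelling this out; and your handling of the cofactor smallness $\|\tilde\Phi-\tilde{\calW}\|_{L^n}\le\omega(\epsilon)$ is exactly what the paper isolates as the separate Lemma~\ref{convergence-of-cofactors} (proved there by compactness over normalized domains combined with \cite[Lemma~3.5]{GN}).

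There is, however, one genuine slip in your rescaling. With $\tilde\phi=(\alpha_0/t_0)[\,\cdots\,]$ one computes
\[
\det D^2\tilde\phi(y)=\Big(\tfrac{\alpha_0}{t_0}\Big)^n|\det T|^{-2}\,\det D^2\phi(T^{-1}y),
\]
and the prefactor $(\alpha_0/t_0)^n|\det T|^{-2}$ is only comparable to $1$ (from the volume estimate $|S_\phi(x_0,t_0/\alpha_0)|\approx (t_0/\alpha_0)^{n/2}$), not equal to $1$. Hence your assertion $1-\epsilon\le\det D^2\tilde\phi\le 1+\epsilon$ is false as written; one only gets $c_1(1-\epsilon)\le\det D^2\tilde\phi\le c_2(1+\epsilon)$ for dimensional constants $c_1<1<c_2$, and this prevents a direct application of Lemma~\ref{lm:acceleration}, whose hypothesis requires the determinant to be $\epsilon$-close to $1$. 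The paper avoids this by taking the scaling factor $|\det T|^{2/n}$ instead of $\alpha_0/t_0$, which yields $\det D^2\tilde\phi(y)=\det D^2\phi(T^{-1}y)$ exactly. With that single correction (and the corresponding harmless adjustments to $\tilde d$, $c_T$, and $C^*$, all of which remain comparable to yours through $|\det T|^{2/n}\approx\alpha_0/t_0$), your argument goes through and coincides with the paper's proof.
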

\begin{proof}
Observe that in fact $\phi\in C^1(\Omega)$. As $\eps$ will be chosen small, we  also have $\phi\in W^{2,n}(\Omega_{\frac{\alpha_0 +1}{2}})$ by Caffarelli $W^{2,p}$ estimates
(see \cite[Theorem~1]{C3} and  \cite[Theorem~6.4.1]{G}).
 Let $T$ be an affine map normalizing $S_\phi(x_0, \frac{t_0}{\alpha_0})$ and let $U:=T\big(S_\phi(x_0, \frac{t_0}{\alpha_0})\big)$. For each $y\in  T(\Omega)$, define 
\begin{align*}
\tilde \phi(y)=
|\det T|^{2/n}\left[
\phi(T^{-1}y) -\phi(x_0) - \nabla \phi(x_0)\cdot (T^{-1} y-x_0) - \frac{ t_0}{\alpha_0}\right] \mbox{ and }
 \tilde u(y)&=  u(T^{-1}y).
 \end{align*}
 Then $1-\epsilon \leq \det D^2\tilde\phi = \det  D^2\phi(T^{-1} y) \leq  1+\epsilon$ in $U$ and $\tilde \phi=0$ on $\partial U$.
 Moreover 
\begin{align*}
&\trace\big(\tilde\Phi(y) D^2\tilde u(y)\big)= 
|\det T|^{\frac{-2}{n}}\trace\big(\Phi(T^{-1}y) D^2 u(T^{-1}y)\big) = |\det T|^{\frac{-2}{n}} f(T^{-1}y) =: \tilde f(y).
\end{align*}
Thus as $\|\tilde u \|_{L^\infty(T(\Omega))} =\|u \|_{L^\infty(\Omega)}\leq 1$,
we obtain from Lemma~\ref{lm:acceleration} with $\alpha := \alpha_0$ that
\begin{equation*}
|G_{N}(\tilde u, T(\Omega)) \cap  U_{\alpha_0} |\geq \left\{1- C\big(N^{-\tau} \delta_0^{\tau} +\epsilon\big)\right\} \, | U_{\alpha_0}|
\end{equation*}
for any  $N\geq N_0 = N_0(\alpha_0, n)$ and provided that $\| \tilde \Phi - \tilde \calW\|_{L^n(U_{\frac{\alpha_0 +1}{2}})} \leq ((1 -\alpha_0)/4)^{\frac{2n}{1+ (n-1)\gamma}}$,
where
\begin{equation}\label{delta-zero}
\delta_0 := \Big(\fint_{U_{\frac{\alpha_0 +1}{2}}} \|\tilde \Phi -\tilde \calW\|^n ~dy\Big)^{\frac{\gamma}{ n}}
+ \Big(\fint_{U} |\tilde f|^n ~dy\Big)^{\frac{1}{ n}}, 
\end{equation}
$\gamma$ is given by Lemma~\ref{explest} and
 $\tilde \calW$ is the cofactor matrix of $D^2 \tilde w$ with  $\tilde w$ is the convex function satisfying  $\det D^2 \tilde w =1$ in $U$ and $\tilde w=0$ on $\partial U$. This together with Lemma~\ref{convergence-of-cofactors} below implies that there exists  $\epsilon>0$ sufficiently small depending only on $\epsilon_0$, $\alpha_0$ and  $n$ such that
\begin{align*}\label{G-tilde-u}
 |G_{N}(\tilde u, T(\Omega)) \cap  U_{\alpha_0}|
 &\geq \left\{1-\epsilon_0 -C N^{-\tau}\Big(\fint_{U} |\tilde f|^n ~dy\Big)^{\frac{\tau}{n}}\right\}~ |U_{\alpha_0}|\\
 &=\left\{1-\epsilon_0 -C N^{-\tau}|\det T|^{\frac{-2\tau}{n}}\Big(\fint_{S_\phi(x_0,  \frac{t_0}{\alpha_0})} | f|^n ~dx\Big)^{\frac{\tau}{n}} \right\}~ |U_{\alpha_0}|\nonumber\\
 &\geq \left\{1-\epsilon_0 -C \big(\frac{t_0}{N}\big)^\tau \Big(\fint_{S_\phi(x_0,   \frac{t_0}{\alpha_0})} | f|^n ~dx\Big)^{\frac{\tau}{n}} \right\}~ |U_{\alpha_0}|\nonumber.
 \end{align*}
But since $U_{\alpha_0} =T(S_\phi(x_0, t_0))$,
the same calculations leading to \eqref{G-transformation} 
yield
\begin{align*}
&G_{N}(\tilde u, T(\Omega)) \cap U_{\alpha_0}
&= T \Big( G_{N |\det T|^{\frac{2}{n}}}(u,\Omega) \cap S_\phi(x_0,  t_0)\Big)\approx T \Big( G_{\frac{N}{t_0}}(u, \Omega) \cap S_\phi(x_0,  t_0)\Big).
\end{align*}
Therefore  we obtain
\begin{align*}
 \left| T \Big( G_{\frac{N}{t_0}}(u,\Omega) \cap S_\phi(x_0,  t_0)\Big)\right|
 &\geq \left\{1-\epsilon_0 -C \big(\frac{t_0}{N}\big)^\tau \Big(\fint_{S_\phi(x_0, \frac{t_0}{\alpha_0})} | f|^n ~dx\Big)^{\frac{\tau}{n}} \right\}~ \left|T (S_\phi(x_0, t_0))\right|
 \end{align*}
giving \eqref{localized-est} for any $N\geq N_0$.  
\end{proof}

In the above proof, we have used the following lemma which is a strengthen version of Lemma~3.5 in \cite{GN}. This result is proved by using a compactness argument and \cite[Lemma~3.5]{GN}.  
 
\begin{lemma}\label{convergence-of-cofactors}
Given any $0<\epsilon_0 <1$ and $0<\alpha<1$, there exists $\epsilon>0$ depending only on $\epsilon_0$, $\alpha$ and $n$ such that if  $\Omega\subset \R^n$ is a normalized convex domain and $\phi,\,w\in C(\overline \Omega)$ are convex functions satisfying
\begin{equation*}
\left\{\begin{array}{rl}
1-\epsilon\!\!\leq\! \det D^2 \phi \!\!\!\!\!&\leq 1+\epsilon\quad \mbox{in}\quad \Omega\\
\!\!\phi\!\!\!\!\!&=0 \ \ \quad \quad\mbox{on}\quad\partial\Omega
\end{array}\right.
\qquad \mbox{and}\qquad \left\{\begin{array}{rl}
 \det D^2 w \!\!\!\!\!&= 1 \quad
 \mbox{in}\quad \Omega\\
\!\!w\!\!\!\!\!&=0 \quad
\mbox{on}\quad\partial\Omega,
\end{array}\right.
\end{equation*}  
then 
\[
\|\Phi-\calW\|_{L^n(\Omega_\alpha)}\leq \epsilon_0,
\]
where $\quad\Omega_{\alpha} :=\{x\in \Omega: \phi(x)<(1-\alpha
)\,\min_{\Omega}\phi \}$.
\end{lemma}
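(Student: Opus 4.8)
The plan is to argue by contradiction, exploiting the compactness of the class of normalized convex domains together with the stability of the Monge--Amp\`ere equation, so as to reduce the varying--domain situation to the fixed--domain statement \cite[Lemma~3.5]{GN}. Suppose the lemma fails for some $\epsilon_0,\alpha\in(0,1)$. Then there are normalized convex domains $\Omega_k$ and convex functions $\phi_k,w_k\in C(\overline{\Omega_k})$ with
\[
1-\tfrac1k\le\det D^2\phi_k\le1+\tfrac1k,\qquad \det D^2w_k=1\ \text{ in }\Omega_k,\qquad \phi_k=w_k=0\ \text{ on }\partial\Omega_k,
\]
such that $\|\Phi_k-\calW_k\|_{L^n((\Omega_k)_\alpha)}>\epsilon_0$ for all $k$, where $\Phi_k,\calW_k$ denote the cofactor matrices of $D^2\phi_k$, $D^2w_k$. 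By the Blaschke selection theorem I would pass to a subsequence along which $\Omega_k\to\Omega_\infty$ in the Hausdorff distance, with $\Omega_\infty$ again a normalized convex domain; since $B_1(0)\subset\Omega_k\subset B_n(0)$, every $\Omega'\Subset\Omega_\infty$ satisfies $\Omega'\Subset\Omega_k$ for all large $k$.

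The first step is to pass to the limit in the potentials. The Aleksandrov maximum principle together with the normalization gives $\|\phi_k\|_{L^\infty(\Omega_k)}+\|w_k\|_{L^\infty(\Omega_k)}\le C_n$, and \cite[Lemma~3.2.1]{G} yields interior Lipschitz bounds for $\phi_k$ and $w_k$ that are uniform on compact subsets of $\Omega_\infty$. Hence, along a further subsequence, $\phi_k\to\phi_\infty$ and $w_k\to w_\infty$ locally uniformly on $\Omega_\infty$, with convex limits. Weak convergence of the Monge--Amp\`ere measures under local uniform convergence, combined with $\det D^2\phi_k,\det D^2w_k\to1$, forces $M\phi_\infty=Mw_\infty=\mathcal L^n$ in $\Omega_\infty$, and the uniform boundary decay of $\phi_k$ and $w_k$ (from the Aleksandrov estimate near $\partial\Omega_k$ and $\Omega_k\to\Omega_\infty$) gives $\phi_\infty=w_\infty=0$ on $\partial\Omega_\infty$. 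By uniqueness for the Dirichlet problem of the Monge--Amp\`ere equation, $\phi_\infty=w_\infty=:v$, and by the Caffarelli--Pogorelov interior regularity $v\in C^\infty(\Omega_\infty)$ with $D^2v>0$; in particular $\mathrm{cof}(D^2v)$ is continuous in $\Omega_\infty$.

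The heart of the argument is the interior convergence of the Hessians. Fix $\alpha_1:=\tfrac{\alpha+1}{2}$, so that by Lemma~\ref{containment2} one has $(\Omega_\infty)_\alpha\Subset(\Omega_\infty)_{\alpha_1}\Subset\Omega_\infty$; since $\phi_k\to v$ locally uniformly, the sections $(\Omega_k)_\alpha$ converge to $(\Omega_\infty)_\alpha$ in the Hausdorff distance, whence $(\Omega_k)_\alpha\subset(\Omega_\infty)_{\alpha_1}$ for all large $k$. Caffarelli's interior $W^{2,p}$ estimates for the Monge--Amp\`ere equation with density close to one (see \cite[Theorem~1]{C3} and \cite[Theorem~6.4.1]{G}) give, for each $p<\infty$ and all large $k$, a uniform bound on $\|D^2\phi_k\|_{L^p((\Omega_\infty)_{\alpha_1})}$ (and likewise for $w_k$, whose density is identically one). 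With these uniform higher integrability bounds in hand, the argument of \cite[Lemma~3.5]{GN}, now carried out on the fixed domain $\Omega_\infty$ and using the local uniform convergence $\phi_k,w_k\to v$, yields $D^2\phi_k\to D^2v$ and $D^2w_k\to D^2v$ strongly in $L^q((\Omega_\infty)_{\alpha_1})$ for every $q<\infty$. Since each entry of the cofactor matrix is a polynomial of degree $n-1$ in the entries of the Hessian, the uniform integrability upgrades this to $\Phi_k\to\mathrm{cof}(D^2v)$ and $\calW_k\to\mathrm{cof}(D^2v)$ in $L^n((\Omega_\infty)_{\alpha_1})$, so that
\[
\|\Phi_k-\calW_k\|_{L^n((\Omega_k)_\alpha)}\le\|\Phi_k-\calW_k\|_{L^n((\Omega_\infty)_{\alpha_1})}\longrightarrow0,
\]
contradicting the choice of the sequence and proving the lemma.

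The step I expect to be the main obstacle is this last one: upgrading the soft compactness information --- local uniform convergence of the potentials and weak compactness of the Hessians --- to \emph{strong} $L^n$ convergence of the Hessians, which is precisely what is needed for the nonlinear cofactor map to pass to the limit, while simultaneously controlling the moving domains $\Omega_k$ and the moving sections $(\Omega_k)_\alpha$. This is where the quantitative $W^{2,p}$ theory near $\det=1$ and the fixed--domain comparison \cite[Lemma~3.5]{GN} do the real work; care is also needed to make the reduction to the fixed domain $\Omega_\infty$ rigorous, for instance by restricting each $\phi_k$ to the section $(\Omega_k)_\alpha\subset(\Omega_\infty)_{\alpha_1}$ and comparing it there with $v$ through the linear uniformly elliptic equation satisfied by $\phi_k-v$.
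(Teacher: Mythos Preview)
Your proposal is correct and follows essentially the same approach as the paper: contradiction, Blaschke selection, local uniform convergence of the potentials to a common limit $v$ via stability and uniqueness for the Monge--Amp\`ere Dirichlet problem, and then \cite[Lemma~3.5]{GN} to pass from convergence of potentials to $L^n$ convergence of cofactor matrices on a fixed interior set, yielding the contradiction.

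The only noteworthy technical difference is in the choice of the fixed interior set containing the moving sections $(\Omega_k)_\alpha$. You use the section $(\Omega_\infty)_{\alpha_1}$ of the limit function $v$ and appeal to Hausdorff convergence of the sections $(\Omega_k)_\alpha\to(\Omega_\infty)_\alpha$; the paper instead uses the purely Euclidean set $\Omega(\tau/2)=\{x\in\Omega_\infty:\dist(x,\partial\Omega_\infty)>\tau/2\}$ with $\tau=c_n(1-\alpha)^n$, observing from the Aleksandrov estimate that $\dist((\Omega_k)_\alpha,\partial\Omega_k)\ge\tau$ uniformly in $k$, and from Hausdorff convergence of the domains that $(\Omega_k)_\alpha\subset\Omega(\tau/2)\subset\Omega(\tau/4)\subset\Omega_k$ for large $k$. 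The paper's route is slightly more elementary since it avoids any discussion of convergence of sections, but both work. Also, the ``main obstacle'' you flag --- upgrading to strong $L^n$ convergence of Hessians and then of cofactors --- is precisely the content of \cite[Lemma~3.5]{GN}, which the paper simply invokes as a black box; your sketch of how that lemma works (uniform $W^{2,p}$ bounds plus the polynomial structure of the cofactor map) is accurate but not needed here.
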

\begin{proof}
Suppose by contradiction that it is not true. Then there exist $\eps_0, \alpha\in (0,1)$, $n\in \N$, a sequence of normalized convex domains $\Omega^k$ and  sequences of convex functions $\phi_k, w_k\in C(\overline{\Omega^k})$ with 
\begin{equation*}
\left\{\begin{array}{rl}
1-\frac{1}{k}\!\!\leq\! \det D^2 \phi_k \!\!\!\!\!&\leq 1+\frac{1}{k}\ \ \mbox{ in}\quad \Omega^k\\
\!\!\phi_k\!\!\!\!\!&=0\ \ \ \ \ \ \ \ \mbox{ on}\quad \partial\Omega^k
\end{array}\right.
\qquad \mbox{and}\qquad 
\left\{\begin{array}{rl}
 \det D^2 w_k\!\!\!\!\!&= 1\ \ \mbox{ in}\quad \Omega^k\\
\!\!w_k\!\!\!\!\!&=0\ \ \mbox{ on}\quad \partial\Omega^k
\end{array}\right.
\end{equation*}  
such that
\begin{equation}\label{contraass}
\|\Phi_k-\calW_k\|_{L^n(\Omega^k_\alpha)}\geq \epsilon_0\quad\mbox{ for all}\quad k.
\end{equation}

By Blaschke selection theorem, there is a subsequence of $\Omega^k$, still denoted by $\Omega^k$, such that $\Omega^k$ converges in the Hausdorff metric to a normalized convex domain $\Omega$. Also by 
\cite[Lemma 5.3.1]{G}
we have up to a subsequence $\phi_k\to\phi$ and $w_k\to w$ uniformly on compact subsets of $\Omega$, where $\phi, w\in C(\bar \Omega)$ are both convex solutions to the equation
\begin{equation*}
\left\{\begin{array}{rl}
\!\!\det D^2 w \!\!\!\!\!&= 1\ \ \mbox{ in}\quad \Omega,\\
\!\!w\!\!\!\!\!&=0\ \ \mbox{ on}\quad \partial\Omega.
\end{array}\right.
\end{equation*}
Thus $\phi\equiv w$ by the uniqueness of convex solutions to the Monge-Amp\`ere equation.

 Next observe that the Aleksandrov estimate \cite[Theorem~1.4.2]{G} and \cite[Proposition~3.2.3]{G} yield
 \begin{equation}\label{away-boundary}
 \dist (\Omega^k_\alpha,\partial \Omega^k)\geq c_n (1-\alpha)^n =: \tau\quad \forall k.
 \end{equation}
For $E\subset \R^n$, let $E(r) := \{x\in E: \dist(x,\partial E)>r \}$ and $\delta_r (E) :=\{x\in\R^n: \dist(x, E)<r\}$. We then claim that
 \begin{equation}\label{all-are-close}
 \Omega^k_\alpha\subset \Omega(\tau/2)\subset \Omega(\tau/4)\subset \Omega^k\quad \mbox{for all $k$ sufficiently large}.
 \end{equation}
Indeed, it follows from \eqref{away-boundary} that $\Omega^k_\alpha\subset \Omega^k(\tau)$. Moreover since $\Omega^k\to \Omega$ in the Hausdorff metric, we have $\Omega^k \subset \delta_{\frac{\tau}{2}}(\Omega)$ for all $k$ large (see \cite{Sc} for the definition of the Hausdorff distance). Therefore, $\Omega^k_\alpha\subset \delta_{\frac{\tau}{2}}(\Omega)(\tau)=\Omega(\tau/2)$ giving the first inclusion in \eqref{all-are-close}. We also infer from the Hausdorff convergence of $\Omega^k$ to $\Omega$ that $\Omega\subset \delta_{\frac{\tau}{4}}(\Omega^k)$  for all $k$ large. This implies $\Omega(\tau/4)\subset \delta_{\frac{\tau}{4}}(\Omega^k)(\tau/4)=\Omega^k$ and the last inclusion in \eqref{all-are-close} is proved.
   
By \eqref{all-are-close} and \cite[Lemma~3.5]{GN} we get
$\Phi_k \longrightarrow \Phi$ in $L^n(\Omega(\tau/2))$ and $\calW_k  \longrightarrow  \calW$ in $L^n(\Omega(\tau/2))$,
where $\Phi$ is the cofactor matrix of $D^2 \phi$ and $\calW$ is the cofactor matrix of $D^2 w$. Since 
 $\Phi \equiv \calW$, this yields $ \Phi_k - \calW_k \longrightarrow 0$ in $L^n(\Omega(\tau/2))$. Combining this with the first inclusion in \eqref{all-are-close} we obtain
 \begin{equation*}\label{cofconv}
\lim_{k\to\infty}\| \Phi_k - \calW_k\|_{L^n(\Omega^k_\alpha)}  =0,
\end{equation*}
which is a contradiction with \eqref{contraass} and the proof is complete.
\end{proof}

In the next lemma, we no longer require $\|u\|_{L^\infty(\Omega)}\leq 1$ as in Lemma~\ref{lm:improved-density-I}.

\begin{lemma}\label{lm:furthercriticaldensitytwo}
Let $0<\epsilon_0<1$, $0<\alpha_0<1$, $\Omega$ be a normalized convex domain
and $u\in W^{2,n}_{loc}(\Omega)\cap C^1(\Omega)$ be a solution of $\calL_\phi u=f$ in $\Omega$,
where $\phi\in C(\overline\Omega)$ is a convex function satisfying 
$\phi=0$ on $\partial \Omega$.
There exists $\epsilon>0$ depending only on  $\epsilon_0$, $\alpha_0$ and $n$  such that if $1-\epsilon \leq \det D^2\phi \leq 1+\epsilon$ in $\Omega$, then for 
any $S_\phi(x_0,t_0)\subset \Omega_{\alpha_0}$ with   $t_0\leq \eta(\alpha_0)$ and $S_\phi(x_0,t_0)\cap G_\gamma(u,\Omega)\neq \emptyset$ we have
\begin{align*}
 \left| G_{N \gamma }(u,\Omega) \cap S_\phi(x_0,  t_0)\right|
 &\geq \left\{1-\epsilon_0 -C (N\gamma)^{-\tau} \Big(\fint_{S_\phi(x_0, \frac{t_0}{\alpha_0})} | f|^n ~dx\Big)^{\frac{\tau}{n}} \right\}\left|S_\phi(x_0, t_0)\right|
 \end{align*}
for all $N\geq N_0$.
Here $\eta(\alpha_0)$, $C$, $\tau$ and $N_0$ are  constants depending only on $\alpha_0$ and $n$.
\end{lemma}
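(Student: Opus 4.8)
The plan is to reduce Lemma~\ref{lm:furthercriticaldensitytwo} to the already-established Lemma~\ref{lm:improved-density-I} by a normalization and rescaling argument, very much in the spirit of the reduction performed in the proof of Lemma~\ref{lm:localcriticaldensitytwo}. The point is that Lemma~\ref{lm:improved-density-I} requires the hypothesis $\|u\|_{L^\infty}\le 1$, which we no longer assume, but the presence of a point $\bar x\in S_\phi(x_0,t_0)\cap G_\gamma(u,\Omega)$ together with the engulfing property forces $u$, after subtracting its supporting plane at $\bar x$ and rescaling, to be bounded on the (slightly larger) section $S_\phi(x_0,t_0/\alpha_0)$.

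First I would normalize: let $T$ be an affine map normalizing the section $S_\phi(x_0,t_0/\alpha_0)$, set $\tilde\phi(y) = |\det T|^{2/n}\big[\phi(T^{-1}y) - \phi(x_0) - \nabla\phi(x_0)\cdot(T^{-1}y - x_0) - t_0/\alpha_0\big]$, so that $U := T(S_\phi(x_0,t_0/\alpha_0))$ is normalized, $\tilde\phi = 0$ on $\partial U$, and $1-\epsilon\le \det D^2\tilde\phi\le 1+\epsilon$ in $U$. Note that $U_{\alpha_0} = T(S_\phi(x_0,t_0))$. Next, pick $\bar x \in S_\phi(x_0,t_0)\cap G_\gamma(u,\Omega)$ and set $\bar y = T\bar x$; since $d(x,\bar x)^2 \le$ (a universal multiple of) $t_0$ on $S_\phi(x_0,t_0/\alpha_0)$ by the engulfing property (as $t_0\le \eta(\alpha_0)$ and $S_\phi(x_0,t_0/\alpha_0)\subset S_\phi(\bar x, \theta t_0/\alpha_0)$), the condition $\bar x\in G_\gamma(u,\Omega)$ gives
\[
|u(x) - u(\bar x) - \nabla u(\bar x)\cdot(x-\bar x)| \le \gamma\, d(x,\bar x)^2 \le C\gamma t_0 \quad \text{for all } x\in S_\phi(x_0,t_0/\alpha_0).
\]
I would therefore define $\tilde u(y) := \frac{1}{C\gamma t_0}\big[u(T^{-1}y) - u(\bar x) - \nabla u(\bar x)\cdot(T^{-1}y - \bar x)\big]$, which satisfies $\|\tilde u\|_{L^\infty(U)}\le 1$ and solves a linearized equation $\trace(\tilde\Phi D^2\tilde u) = \tilde f$ in $U$ with $\tilde f(y) = \frac{|\det T|^{-2/n}}{C\gamma t_0} f(T^{-1}y)$, the subtraction of the affine function not affecting the equation. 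Then I would apply Lemma~\ref{lm:improved-density-I} on $U$ (with the same $\epsilon_0$, $\alpha_0$, choosing $\epsilon$ accordingly), to the section $T(S_\phi(x_0,t_0)) = U_{\alpha_0}$, obtaining a density estimate for $G_{N'/t_0'}(\tilde u, T(\Omega))\cap U_{\alpha_0}$ in terms of an average of $|\tilde f|^n$.

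Finally I would undo the two transformations. Subtracting an affine function does not change the sets $G_M$ (the paraboloid condition is insensitive to affine shifts), so $G_M(\tilde u, \cdot)$ corresponds to $G_{M'}$ of the rescaled-back function with $M' = M\cdot(C\gamma t_0)\cdot|\det T|^{2/n}$; combined with the scaling of $d(\cdot,\cdot)^2$ under $T$ (namely $\tilde d(Tx,T\bar x)^2 = |\det T|^{2/n} d(x,\bar x)^2$, exactly as used to derive \eqref{G-transformation}) and the estimates $|\det T|\approx 1$, $t_0^n|\det T|^2\approx 1$, this matches $G_{N\gamma}(u,\Omega)\cap S_\phi(x_0,t_0)$ for an appropriate relabeling of constants. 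The average of $|\tilde f|^n$ over $U$ transforms back to $\fint_{S_\phi(x_0,t_0/\alpha_0)}|f|^n\,dx$ up to universal constants, and the prefactor $(t_0'/N')^\tau$ becomes $(N\gamma)^{-\tau}$ times that average. Tracking the constants $\eta(\alpha_0)$, $C$, $\tau$, $N_0$ shows they depend only on $\alpha_0$ and $n$, as claimed. The main obstacle I anticipate is purely bookkeeping: carefully verifying that the affine subtraction is harmless for both the equation and the sets $G_M$, and that all the scaling factors ($C\gamma t_0$ from the normalization of $u$, $|\det T|^{2/n}$ from $T$, and the $t_0$ versus $t_0/\alpha_0$ discrepancy between the two sections) combine to give precisely the stated form with constants independent of $\gamma$, $t_0$, and $x_0$. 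No genuinely new analytic input beyond Lemma~\ref{lm:improved-density-I} and the engulfing property should be needed.
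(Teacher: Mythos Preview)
Your overall strategy---normalize $S_\phi(x_0,t_0/\alpha_0)$, subtract the affine part of $u$ at the contact point $\bar x\in G_\gamma(u,\Omega)$, rescale so the result is bounded by $1$ on $U$, apply the improved density estimate, then undo the transformations---is exactly what the paper does. However, there is a genuine gap in the step where you invoke Lemma~\ref{lm:improved-density-I}.

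Lemma~\ref{lm:improved-density-I} takes as input a \emph{single} normalized domain $\Omega$ with $\|u\|_{L^\infty(\Omega)}\le 1$ and outputs an estimate for $G_M(u,\Omega)$; there is no separate ``outer domain''. If you apply it with $\Omega\rightsquigarrow U$, the conclusion concerns $G_M(\tilde u,U)$, which after undoing the normalization yields only $G_{M'}(u,S_\phi(x_0,t_0/\alpha_0))$ and not the required $G_{M'}(u,\Omega)$. Since touching paraboloids over a larger set is a \emph{stronger} condition, this inclusion points the wrong way. Conversely, you cannot apply Lemma~\ref{lm:improved-density-I} with its $\Omega$ equal to $T(\Omega)$: that set is not normalized, and your $\tilde u$ is not bounded by $1$ there.

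The paper resolves this by applying Lemma~\ref{lm:acceleration} directly instead. That lemma is set up precisely for a pair $U\subset\Omega$ with $U$ normalized, and in place of a global $L^\infty$ bound it assumes only $|u|\le 1$ in $U$ together with the quadratic growth $|u(x)|\le C^*\,d(x,x_0)^2$ on $\Omega\setminus U$ for some $x_0\in U_{\alpha_0}$; its conclusion is then about $G_N(u,\Omega)$ on the full $\Omega$. Your rescaled function does satisfy this growth hypothesis (with center $\bar y$), because $\bar x\in G_\gamma(u,\Omega)$ gives $|u(x)-u(\bar x)-\nabla u(\bar x)\cdot(x-\bar x)|\le\gamma\,d(x,\bar x)^2$ on \emph{all} of $\Omega$, which after the rescaling by $(C\gamma t_0)^{-1}$ and the identity $\tilde d(y,\bar y)^2=|\det T|^{2/n}d(T^{-1}y,\bar x)^2$ becomes $|\tilde u(y)|\le C^*\,\tilde d(y,\bar y)^2$ with a universal $C^*$. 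The fix is therefore simply to swap Lemma~\ref{lm:improved-density-I} for Lemma~\ref{lm:acceleration}, verify this quadratic growth, and then (as in the proof of Lemma~\ref{lm:improved-density-I}) invoke Lemma~\ref{convergence-of-cofactors} to absorb the cofactor term into the $\epsilon_0$ budget; after that your bookkeeping of the scaling factors goes through as written.
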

\begin{proof}
Let $\theta>1$ be the engulfing constant corresponding to  $1/2 \leq \det D^2\phi \leq 3/2$ in $\Omega$ and so $\theta$ depends only on the dimension $n$. By Lemma \ref{containment2}, there exists $\eta(\alpha_0) =\eta(\alpha_0,n)>0$ such that $S_\phi(x, \frac{\theta t}{\alpha_0})\subset \Omega_{\frac{\alpha_0 +1}{2}}$ for all $x\in\Omega_{\alpha_0}$ and $t\leq \eta(\alpha_0)$. We note that $\phi\in C^1(\Omega)\cap W^{2,n}(\Omega_{\frac{\alpha_0 +1}{2}})$ as explained in the proof of Lemma~\ref{lm:improved-density-I}.

Let $T$ normalize
$S_\phi(x_0, \frac{t_0}{\alpha_0})$ and  $U:= T\big(S_\phi(x_0, \frac{t_0}{\alpha_0})\big)$. For each $y\in T(\Omega)$, set
\begin{align*}
\tilde \phi(y)=|\det T|^{\frac{2}{n}} \Big[\phi(T^{-1}y) - \phi(x_0) -\langle \nabla \phi( x_0), T^{-1} y - x_0\rangle - \frac{ t_0}{\alpha_0} \Big]
 \text{ and }  \tilde u(y)= \dfrac{1}{2\theta t_0}\, u(T^{-1}y).
\end{align*}
We have $U $ is
normalized, $1-\epsilon
\leq \det D^2 \tilde \phi \leq 1 +\epsilon$ in $U$ and  $\tilde \phi=0$ on $\partial U$.
Let $\bar x\in S_\phi(x_0,t_0)\cap G_\gamma(u,\Omega)$ and $\bar y=T\bar x$. Then
\[
-\gamma \, d(x,\bar x)^2 \leq u(x)-u(\bar x) - \nabla u(\bar
x)\cdot (x-\bar x) \leq \gamma \, d(x,\bar x)^2,\quad \forall x\in \Omega.
\]
Hence by changing variables we get
\begin{equation}\label{eq:utildeboundedbydistance}
-\gamma \, \dfrac{d(T^{-1}y,T^{-1}\bar y)^2}{2\theta t_0} \leq \tilde
u(y)- \tilde u(\bar y) - \nabla \tilde u(\bar y)\cdot (y-\bar y)
\leq \gamma \, \dfrac{d(T^{-1}y,T^{-1}\bar y)^2}{2\theta t_0},~ \forall y\in T(\Omega).
\end{equation}

Since $\bar x\in S_\phi(x_0, t_0) \subset S_\phi(x_0,t_0/\alpha_0)$, we have $S_\phi(x_0, t_0/\alpha_0)\subset
S_\phi(\bar x, \theta t_0/\alpha_0)$ by the engulfing property. It follows that
$d(x,\bar x)^2\leq \theta t_0/\alpha_0$
for
$x\in S_\phi(x_0, t_0/\alpha_0)$ yielding $d(T^{-1}y,T^{-1}\bar y)^2\leq \theta t_0/\alpha_0$ for all
$y\in U$.  Consequently,
\[
-\frac{\gamma}{2\alpha_0} \leq \tilde u(y)- \tilde u(\bar y) - \nabla \tilde
u(\bar y)\cdot (y-\bar y) \leq \frac{\gamma}{2\alpha_0}\quad \forall y\in U.
\]
Let
$
v(y) := \dfrac{2 \alpha_0}{\gamma}\left[\tilde u(y)- \tilde u(\bar y) - \nabla \tilde u(\bar
y)\cdot (y-\bar y)\right]$, for $y\in T(\Omega)$.
Then $|v| \leq 1$ in $U$ and
by \eqref{eq:utildeboundedbydistance} we also have
\[
|v(y)|\leq \frac{\alpha_0}{\theta t_0}
d(T^{-1} y, T^{-1} \bar y)^2
\leq \frac{C_n\alpha_0}{\theta}
\tilde d( y, \bar y)^2\quad \forall y\in T(\Omega),
\]
where $\tilde d(y,\bar y)^2
:= \tilde\phi(y) -\tilde\phi(\bar y) -\langle \nabla \tilde\phi (\bar y), y - \bar y\rangle =|\det T|^{2/n}d(T^{-1}y,T^{-1}\bar y)^2$.
Moreover
\begin{align*}
\trace(\tilde \Phi D^2 v)
&= \frac{\alpha_0 |\det T|^{\frac{-2}{n}}}{ \theta \gamma t_0}\trace\Big(  \Phi(T^{-1} y) D^2u(T^{-1}y)  \Big)= \frac{\alpha_0}{\theta \gamma t_0 |\det T|^{\frac{2}{n}}} f(T^{-1} y)=: \tilde f(y).
\end{align*}
Notice that $\bar y\in U_{\alpha_0}= T(S_\phi(x_0,  t_0))$ because $\bar x\in S_\phi(x_0,  t_0)$.
Thus we obtain from Lemma~\ref{lm:acceleration} with $\alpha := \alpha_0$ that
\begin{equation*}
|G_{N}(v, T(\Omega)) \cap  U_{\alpha_0} |\geq \left\{1- C\big( N^{-\tau}\delta_0^{\tau} +\epsilon\big)\right\} \, | U_{\alpha_0}|
\end{equation*}
for any  $N\geq N_0 = N_0(\alpha_0,n)$ and provided that $\| \tilde \Phi - \tilde \calW\|_{L^n(U_{\frac{\alpha_0 +1}{2}})} \leq ((1 -\alpha_0)/4)^{\frac{2n}{1+ (n-1)\gamma}}$,
where
$\delta_0$ and $\tilde \calW$ are as in \eqref{delta-zero}.
This together with Lemma~\ref{convergence-of-cofactors} implies that there exists  $\epsilon>0$  depending only on $\epsilon_0$, $\alpha_0$ and  $n$ such that
\begin{align*}\label{G-v}
 |G_{N}(v, T(\Omega)) \cap  U_{\alpha_0}|
 &\geq \left\{1-\epsilon_0 -C N^{-\tau} \Big(\fint_{U} |\tilde f|^n ~dy\Big)^{\frac{\tau}{n}} \right\}~ |U_{\alpha_0}|\\
 &=\left\{1-\epsilon_0 -C
 \big(\frac{\alpha_0}{\theta \gamma N}\big)^\tau \big(t_0 |\det T|^{\frac{2}{n}}\big)^{-\tau}
  \Big(\fint_{S_\phi(x_0,   \frac{t_0}{\alpha_0})} | f|^n ~dx\Big)^{\frac{\tau}{n}} \right\}~ |U_{\alpha_0}|\nonumber\\
 &\geq \left\{1-\epsilon_0 -C
  \big(\frac{\alpha_0}{\theta \gamma N}\big)^\tau \Big(\fint_{S_\phi(x_0,   \frac{t_0}{\alpha_0})} | f|^n ~dx\Big)^{\frac{\tau}{n}} \right\}~ |U_{\alpha_0}|\nonumber.
 \end{align*}
But  since $U_{\alpha_0} =T(S_\phi(x_0,  t_0))$ and  $v(y) =\frac{\alpha_0}{\theta \gamma t_0} \big[u(T^{-1}y) - u(\bar x) -\langle
 \nabla u(\bar x), T^{-1}y -\bar x \rangle \big]$,   the same calculations leading to \eqref{G-transformation} yield
\begin{align*}
&G_{N}(v, T(\Omega)) \cap U_{\alpha_0}
=T \Big( G_{\frac{ N\theta \gamma t_0 |\det T|^{\frac{2}{n}}}{\alpha_0} }(u,\Omega) \cap S_\phi(x_0,   t_0)\Big)
\approx  T \Big( G_{\frac{ N\theta \gamma}{\alpha_0} }(u,\Omega) \cap S_\phi(x_0,   t_0)\Big).
\end{align*}
Therefore  we obtain
\begin{align*}
 \left| T \Big( G_{\frac{ N\theta \gamma}{\alpha_0} }(u,\Omega) \cap S_\phi(x_0,   t_0)\Big) \right|
 &\geq \left\{1-\epsilon_0 -C
  \big(\frac{\alpha_0}{\theta \gamma N}\big)^\tau
   \Big(\fint_{S_\phi(x_0,  \frac{t_0}{\alpha_0})} | f|^n ~dx\Big)^{\frac{\tau}{n}} \right\}~ \left|T( S_\phi(x_0,  t_0))\right|.
 \end{align*}
By setting $N'= N\theta/\alpha_0$, we can rewrite this as 
\begin{align*}
 \left| G_{ N' \gamma}(u,\Omega) \cap S_\phi(x_0,  t_0)\right|
 &\geq \left\{1-\epsilon_0 -C
  \big(\frac{1}{ \gamma N'}\big)^\tau
 \Big(\fint_{S_\phi(x_0,  \frac{t_0}{\alpha_0})} | f|^n ~dx\Big)^{\frac{\tau}{n}} \right\}~ \left|S_\phi(x_0, t_0)\right|
 \end{align*}
for any $N'\geq N_0=N_0(\alpha_0,n)$.

\end{proof}

\subsection{$W^{2,p}$ estimate}
In this subsection we will use the density estimates established in Subsection~\ref{sub:improveddensity} to derive 
interior $W^{2,p}$-estimates for solution $u$ of the linearized equation $\calL_\phi u=f$ when $f\in L^q(\Omega)$ for some $q>n$.   We begin with the following key result which gives a solution to the conjecture  in \cite{GT}.
 
\begin{theorem}\label{premainhomo}
Let $\Omega$ be a normalized convex domain and $u\in W^{2,n}_{loc}(\Omega)$ be a solution of
$\calL_\phi u=f$ in $\Omega$, where  
 $\phi\in C(\overline\Omega)$ is a convex function satisfying $\phi=0$ on $\partial \Omega$. 
 Let $p>1$, $\max{\{n,p \}}<q <\infty$ and let $0<\alpha<1$. Then there exist positive constants $\epsilon$ and $C$ depending only on $p$, 
 $q$,  $\alpha$  and $n$ such that
 if  $1-\epsilon \leq \det D^2\phi \leq 1+\epsilon$, we have
\begin{equation*}
\|D^2u\|_{L^p(\Omega_{\alpha})}\leq
C\Big( \|u\|_{L^\infty(\Omega)} +\|f\|_{L^{q}(\Omega)}\Big).
\end{equation*}
\end{theorem}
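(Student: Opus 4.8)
The plan is to run the classical Calder\'on--Zygmund scheme in the geometry of sections: reduce the $L^p$ bound for $D^2u$ to an $L^p$ bound for the quantity $\Theta(u)$ of Lemma~\ref{distribution-function}, estimate the distribution function of $\Theta(u)$ through that lemma, and control the two resulting pieces --- the one that involves only $\phi$ by Theorem~\ref{thm:powerdecayMA}, and the one that involves $u$ by accelerating the poor power decay of Proposition~\ref{Initial-Estimate}.

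First I would normalize, dividing $u$ by $\|u\|_{L^\infty(\Omega)}+\|f\|_{L^q(\Omega)}$, so that $\|u\|_{L^\infty(\Omega)}\le1$ and $\|f\|_{L^q(\Omega)}\le1$, and fix $\alpha_0:=\tfrac{\alpha+1}{2}$, which pins down the constants $\eta_0,\delta_0$ of Section~\ref{preliminaries}. For $\epsilon$ small, Caffarelli's regularity theory \cite{C2,C3} gives $\phi\in C^1(\Omega)\cap W^{2,n}_{\mathrm{loc}}(\Omega)$, the interior $C^1$ estimates of \cite{GN} give $u\in C^1(\Omega)$, and $\mu:=M\phi$ is comparable to Lebesgue measure with ratio in $[1-\epsilon,1+\epsilon]$; thus $u$ and $\phi$ meet the hypotheses of Lemmas~\ref{lm:improved-density-I} and \ref{lm:furthercriticaldensitytwo}. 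Since $W^{2,n}$ functions are twice differentiable a.e.\ and $\|D^2u(x)\|\le2\,\Theta(u)(x)$ at every such point (the defining property of $\Theta$, see \cite[Ch.~1]{CC}), it will suffice to bound $\|\Theta(u)\|_{L^p(\Omega_\alpha)}$. Picking a number $\kappa$ with $1<\kappa<q/p$ --- possible precisely because $q>p$ --- and writing
\[
\|\Theta(u)\|_{L^p(\Omega_\alpha)}^p=p\kappa\int_0^\infty\beta^{\kappa p-1}\,\big|\{x\in\Omega_\alpha:\Theta(u)(x)>\beta^\kappa\}\big|\,d\beta ,
\]
with the range $\beta\le1$ contributing at most $|\Omega_\alpha|/(\kappa p)$, the whole problem reduces to proving
\[
\big|\{x\in\Omega_\alpha:\Theta(u)(x)>\beta^\kappa\}\big|\le C\,\beta^{-\kappa p-\sigma}\qquad\text{for some }\sigma>0\text{ and all large }\beta .
\]

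By Lemma~\ref{distribution-function} this set is contained in $(\Omega_\alpha\setminus D^\alpha_{s(\beta)})\cup(\Omega_\alpha\setminus G_\beta(u,\Omega))$ with $s(\beta)=(c\,\beta^{(\kappa-1)/2})^{1/(n-1)}$. For the first set, Theorem~\ref{thm:powerdecayMA} bounds $|\Omega_\alpha\setminus D^\alpha_{s(\beta)}|$ by a constant times $\beta$ to a power proportional to $\ln\sqrt{C_n\epsilon}$, which tends to $-\infty$ as $\epsilon\to0$, so choosing $\epsilon$ small (depending on $p,\kappa,n$) makes this piece $\le C\beta^{-\kappa p-1}$. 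The second set is the heart of the matter, and here I would rerun the stopping-time and section-covering scheme of Proposition~\ref{Initial-Estimate} --- heights $\alpha_k\downarrow$ staying above $\alpha$, radii $M^k$ with $M\ge N_0$ fixed large, and the Calder\'on--Zygmund covering theorem for sections \cite{CG1,CG2} (\cite[Thm~6.3.3]{G}) --- but feeding in the \emph{improved} density estimates of Lemmas~\ref{lm:improved-density-I} (base step, via $\|u\|_\infty\le1$) and \ref{lm:furthercriticaldensitytwo} (iteration step) in place of Lemmas~\ref{lm:localcriticaldensityofGM}--\ref{lm:localcriticaldensitytwo}. Two new ingredients produce the acceleration. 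First, by Lemma~\ref{convergence-of-cofactors} the density deficit in those lemmas can be forced below any prescribed $\epsilon_1/2$ once $\epsilon=\epsilon(\epsilon_1,\alpha_0,n)$ is small, so a critical section of bad density $\epsilon_1$ must either miss $G_{M^{k-1}}(u,\Omega)$ or satisfy $\fint_{S_\phi(x_0,t_0/\alpha_0)}|f|^n\,dx\gtrsim M^{kn}$, and the covering theorem then yields a geometric ratio $\nu=\nu(\epsilon_1)$ that is small. Second, since $q>n$ we have $|f|^n\in L^{q/n}(\mu)$ with $q/n>1$, so those exceptional scales are controlled by $\mathcal M_\mu(|f|^n)$ through the \emph{strong-type} $L^{q/n}$ bound of Theorem~\ref{strongtype} (instead of the weak $(1,1)$ bound used in Proposition~\ref{Initial-Estimate}), giving $b_k:=\mu\{x\in\Omega_{\alpha_0}:\mathcal M_\mu(|f|^n)(x)>cM^{kn}\}\le CM^{-kq}$. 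Iterating the recursion $a_{k+1}\le\nu(a_k+b_k)$ with $\epsilon_1$ so small that $\nu\le M^{-q}$ yields $a_k\le C(1+k)M^{-kq}$, hence $\mu(\Omega_\alpha\setminus G_\beta(u,\Omega))\le C\beta^{-(q-\delta)}$ for every $\delta>0$ and all large $\beta$ (the factor $1+k$ being absorbed into $\beta^{-\delta}$).

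It then remains to fix $\delta>0$ with $\kappa p<q-\delta$: the two pieces combine to the required bound with $\sigma:=\min\{1,\,q-\delta-\kappa p\}>0$, the integral above is finite, $\|\Theta(u)\|_{L^p(\Omega_\alpha)}\le C$, and undoing the normalization proves the theorem; since $\alpha_0,\kappa,\delta,\epsilon_1,M,\epsilon$ were all chosen in terms of $p,q,\alpha,n$ only, so is the final constant. I expect the genuine obstacle to lie in the acceleration step: one must carry the section-covering bookkeeping so that the heights $\alpha_k$ decrease but terminate above $\alpha$ while the radii $M^k$ and the $f$-thresholds $M^{kn}$ stay synchronized with the hypotheses of Lemma~\ref{lm:furthercriticaldensitytwo}, and one must check at each stage that the rescaled solutions retain the quadratic growth bound off the current section required by Lemma~\ref{lm:acceleration} --- which is exactly where the normalization $\|u\|_\infty\le1$, the engulfing property, and the containment constant $\eta_0$ of Lemma~\ref{containment2} enter.
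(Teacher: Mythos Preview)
Your proposal is correct and follows essentially the same route as the paper: normalize, reduce to an $L^p$ bound on $\Theta(u)$ via Lemma~\ref{distribution-function}, handle the $D^\alpha$ piece with Theorem~\ref{thm:powerdecayMA}, and accelerate the decay of $|\Omega_\alpha\setminus G_\beta(u,\Omega)|$ by rerunning the covering iteration of Proposition~\ref{Initial-Estimate} with Lemmas~\ref{lm:improved-density-I}--\ref{lm:furthercriticaldensitytwo} in place of the initial density lemmas and the strong-type bound of Theorem~\ref{strongtype} in place of the weak $(1,1)$ bound. The only cosmetic differences are that the paper normalizes so that $\|f\|_{L^q}\le\epsilon$ rather than $\le1$, takes $\kappa=q/p$ exactly, and fixes $M$ first before choosing $\epsilon_0$ so that $M^q\sqrt{2\epsilon_0}=\tfrac12$; your final worry about the quadratic-growth hypothesis of Lemma~\ref{lm:acceleration} is unnecessary here, since that condition is already absorbed into the statements of Lemmas~\ref{lm:improved-density-I} and~\ref{lm:furthercriticaldensitytwo}.
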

\begin{proof}
We first observe that by working with the function  $v :=\dfrac{\epsilon u}{\epsilon \|u\|_{L^\infty(\Omega)} +\|f\|_{L^{q}(\Omega)}}$ instead of $u$, it is enough to show that  there exist $\epsilon, C>0$  depending only on $p$, $q$, $\alpha$ and $n$ such that
 if  $1-\epsilon \leq \det D^2\phi \leq 1+\epsilon$, $\|u\|_{L^\infty(\Omega)}\leq 1$ and $\|f\|_{L^{q}(\Omega)}\leq \epsilon$, then  
\begin{equation}\label{reduction}
\|D^2u\|_{L^p(\Omega_{\alpha})}\leq C.
\end{equation}
Note also that $u\in C^1(\Omega)$ as a consequence of $C^{1,\alpha}_{loc}$ estimates in \cite[Theorem~4.5]{GN}.

Let $\alpha_0 :=\frac{\alpha+1}{2}$ and $N_0 =N_0(\alpha_0,n)$ be the largest of the constants in Lemma~\ref{lm:improved-density-I} and
Lemma~\ref{lm:furthercriticaldensitytwo}.
Fix $M\geq N_0$ so that
$\frac{1}{C_0^{1/\gamma} (M^{1/\gamma} -1)}\leq \frac{1-\alpha_0}{2}$ and $(c M^{\frac{q-p}{2p}})^{\frac{1}{n-1}}\geq \frac{\diam(\Omega)}{\sqrt{\eta_0}}$, where $\gamma$, $C_0$ are given by  Lemma~\ref{containment2} and $c$ is given by Lemma~\ref{distribution-function} when $\lambda=1/2$ and $\Lambda=3/2$. 
Next select $0<\epsilon_0<1/2$ such that
\[
M^{q} \sqrt{2 \epsilon_0}  =\frac{1}{2}
\]
and  $\epsilon=\epsilon(\epsilon_0,\alpha_0, n)=\epsilon(p,q,\alpha,n)$ be the smallest  of the constants in
Lemma~\ref{lm:improved-density-I} and
Lemma~\ref{lm:furthercriticaldensitytwo}.
With this choice of $\epsilon$, we are going to show that \eqref{reduction} holds.
Applying Lemma~\ref{lm:improved-density-I} to the function
$u$ and using  $\|f\|_{L^{q}(\Omega)}\leq \epsilon$ we obtain 
\begin{align*}
\big|S_\phi(x_0,t_0)\cap G_{\frac{M}{t_0}}(u,\Omega)\big|
\geq
\left(1-\epsilon_0 - C \epsilon^\tau \right)\, |S_\phi(x_0,t_0)|
\end{align*}
as long as $S_\phi(x_0,\frac{t_0}{\alpha_0})\subset \Omega_{\frac{\alpha_0 +1}{2}}$, where $C=C(p,\alpha,n)$ and $\tau=\tau(\alpha,n)$.
By taking $\epsilon$ even smaller if necessary we can assume $C \epsilon^\tau <\epsilon_0$. Then it follows from the above inequality that
\begin{equation}\label{eq:consequence-lemma4.3}
\big|S_\phi(x_0,t_0)\setminus G_{\frac{M}{t_0}}(u,\Omega)\big| 
\leq 2\epsilon_0 \, |S_\phi(x_0,t_0)|
\quad\mbox{for any}\quad S_\phi(x_0,\frac{t_0}{\alpha_0})\subset \Omega_{\frac{\alpha_0 +1}{2}}.
\end{equation}

Let $\eta(\alpha_0)>0$ be given by Lemma~\ref{lm:furthercriticaldensitytwo} ensuring in particular that $S_\phi(x, \frac{t}{\alpha_0})\subset \Omega_{\frac{\alpha_0 +1}{2}}$ for all $x\in\Omega_{\alpha_0}$ and $t\leq \eta(\alpha_0)$.  Assume 
$\alpha_2<\alpha_1<\alpha_0$ are such that there exist $\eta_2<\eta_1\leq \eta(\alpha_0)$ with the property: 
if $x\in \Omega_{\alpha_2}$ and $t\leq \eta_2$ then
$S_\phi(x,t)\subset \Omega_{\alpha_1}$; and if $x\in \Omega_{\alpha_1}$ and $t\leq \eta_1$ then
$S_\phi(x,t)\subset \Omega_{\alpha_0}$. 
With these choices and
for $1/h \leq \eta_2$, by using \eqref{eq:consequence-lemma4.3} and the same arguments leading to \eqref{eq:criticaldensitynonhomoge} we obtain:
%
for any $x_0\in
\Omega_{\alpha_2}\setminus G_{h M}(u,\Omega)$ there is
$t_{x_0}\leq 1/h$ such that
\begin{equation}\label{eq:generalcriticaldensity}
\big|\big(\Omega_{\alpha_2}\setminus G_{h M}(u,\Omega)\big) \cap
S_\phi(x_0,t_{x_0})\big| = 2\epsilon_0 \, |S_\phi(x_0,t_{x_0})|.
\end{equation}
We now claim that \eqref{eq:generalcriticaldensity} implies 
\begin{equation}\label{eq:section-contained}
S_\phi(x_0,t_{x_0}) \subset \big(\Omega_{\alpha_1}\setminus G_h
(u,\Omega)\big) \cup \big\{x\in \Omega_{\alpha_0}: \mathcal M_\mu ((f/\det D^2
\phi)^n)(x)> (c^* M h)^n\big\},
\end{equation}
where $c^* :=(\frac{\epsilon_0}{C})^{1/\tau}$ and $\mu := M\phi$.
Otherwise, and since $x_0\in \Omega_{\alpha_2}$ and $t_{x_0}\leq
1/h \leq \eta_2$, we have that $S_\phi(x_0,t_{x_0})\subset
\Omega_{\alpha_1}$ and there exists $\bar x\in
S_\phi(x_0,t_{x_0})\cap G_h(u,\Omega)$ such that $\mathcal M_\mu
((f/\det D^2 \phi)^n)(\bar x)\leq (c^* M h)^n$. Note also that  $ t_{x_0} \leq   \eta(\alpha_0)$ and due to our assumption on $\phi$   the measure  $\mu$ is comparable to the Lebesgue  measure. Then by
Lemma~\ref{lm:furthercriticaldensitytwo} applied to $u$ we get 
\begin{align*}
\big|S_\phi(x_0,t_{x_0})\cap G_{h M}(u,\Omega)\big| > (1-2 \epsilon_0) \, |S_\phi(x_0,t_{x_0})|
\end{align*}
yielding
\[
\big|\big(\Omega_{\alpha_2}\setminus G_{h M}(u,\Omega)\big)\cap
S_\phi(x_0,t_{x_0})\big| \leq \big|S_\phi(x_0,t_{x_0})\setminus
G_{h M}(u,\Omega)\big| < 2\epsilon_0 \,|S_\phi(x_0,t_{x_0})|.
\]
This is a contradiction with \eqref{eq:generalcriticaldensity} and so
\eqref{eq:section-contained} is proved. We infer from \eqref{eq:generalcriticaldensity}, 
\eqref{eq:section-contained}
and  \cite[Theorem~6.3.3]{G} that
\begin{align}\label{eq:generalpowerdecay}
&|\Omega_{\alpha_2}\setminus G_{h M}(u,\Omega)|\\
&\leq  \sqrt{2\epsilon_0}\left[
|\Omega_{\alpha_1}\setminus G_{h}(u,\Omega)|
 +  \big|\{x\in \Omega_{\alpha_0}:
\mathcal M_\mu ((f/\det D^2\phi)^n)(x)> (c^* M h)^n\}\big|\right],\nonumber
\end{align}
as long as $\alpha_2<\alpha_1<\alpha_0$ are such that $\eta_2<\eta_1\leq \eta(\alpha_0)$, and $1/h \leq
\eta_2$.

For $k=0,1,\dots$, set
\[
a_k:= |\Omega_{\alpha_k}\setminus G_{M^k}(u,\Omega)| \quad \text{and}\quad b_k:=\big|\{x\in \Omega_{\alpha_0}:
\mathcal M_\mu ((f/\det D^2\phi)^n)(x)> (c^* M M^k)^n\}\big|, 
\]
where $\alpha_k$ will be defined inductively in the sequel. First fix $\alpha_1$ so that 
\[
\frac{3\alpha_0-1}{2}  <\alpha_1<\alpha_0\quad\mbox{and}\quad \eta_1 :=C_0 (\alpha_0-\alpha_1)^{\gamma}\leq \eta(\alpha_0).
\]
By taking  $M$ even larger if necessary, we can assume that $1/M< \eta_1$. Let $h=M$, and set
$\alpha_2=\alpha_1 -\frac{1}{(C_0 M)^{1/\gamma} }$. Then 
$\frac{1}{h}
=\frac{1}{ M}=C_0 (\alpha_1-\alpha_2)^{\gamma}=: \eta_2$, and
so from  Lemma~\ref{containment2} and \eqref{eq:generalpowerdecay} we get
$a_2\leq \sqrt{2\epsilon_0}(a_1 + b_1)$.
Next let $h=M^2$ and $\alpha_3 = \alpha_2 -\frac{1}{(C_0 M^2)^{1/\gamma} }$, so
$\frac{1}{h}=C_0
(\alpha_2-\alpha_3)^{\gamma}=: \eta_3$. Then
$a_3\leq \sqrt{2\epsilon_0}(a_2 + b_2)
\leq 2\epsilon_0 a_1 +  2\epsilon_0 b_1 + \sqrt{2\epsilon_0}\, b_2$.
Continuing in this way we conclude that
\begin{align*}
a_{k+1}\leq (\sqrt{2\epsilon_0})^k a_1 + \sum_{i=1}^{k}{(\sqrt{2\epsilon_0})^{(k+1)-i} b_i}\quad \mbox{for}\quad k=1,2,\dots
\end{align*}
On the other hand, $\alpha_{k+1}=\alpha_1- \sum_{j=1}^k \frac{1}{(C_0 M^j)^{1/\gamma}} \geq \frac{3\alpha_0-1}{2} - \frac{1}{C_0^{1/\gamma} (M^{1/\gamma} -1)} 
\geq 2\alpha_0 -1=\alpha$ by our choice of $\alpha_1$, $M$ and $\alpha_0$.
Therefore for every $k\geq 1$, 
\begin{equation}\label{improved-est}
|\Omega_{\alpha}\setminus G_{M^{k+1}}(u,\Omega)|
\leq
|\Omega_{\alpha_{k+1}}\setminus G_{M^{k+1}}(u,\Omega)|\leq (\sqrt{2\epsilon_0})^k a_1 + \sum_{i=1}^{k}{(\sqrt{2\epsilon_0})^{(k+1)-i} b_i}.
\end{equation}

Next let $\Theta(u)$ be the function  defined by \eqref{function-theta}. We claim that $\Theta(u)\in L^p(\Omega_{\alpha})$ and
\begin{equation}\label{Theta-est}
\|\Theta(u)\|_{L^p(\Omega_{\alpha})}\leq C(p,q,\alpha,n).
\end{equation}
Indeed since $u\in C^1(\Omega)$, it is easy to see that $\Theta(u)$ is lower semicontinuous in $\Omega_\alpha$ and so   measurable there. Moreover, we have 
\begin{align*}\label{Lp-distribution-fn}
&\int_{\Omega_{\alpha}}{|\Theta(u)|^p ~dx}
= p\int_0^{\infty}{t^{p-1} \big|\{x\in \Omega_{\alpha}: \,\Theta(u)(x) >t \}\big| ~dt}\\
&= p \int_0^{M^{\frac{q}{p}}}{t^{p-1} \big|\{x\in \Omega_{\alpha}: \Theta(u)(x) >t \}\big| ~dt}
+ p\sum_{k=1}^{\infty}\int_{M^{\frac{qk}{p}}}^{M^{\frac{q(k+1)}{p}}}{t^{p-1} \big|\{x\in \Omega_{\alpha}: \Theta(u)(x) >t \}\big| ~dt}\nonumber\\
&\leq |\Omega_{\alpha}| M^{q} +\big(M^{q} -1 \big)  \sum_{k=1}^{\infty}{M^{qk} \big|\{x\in \Omega_{\alpha}: ~\Theta(u)(x) > M^{\frac{qk}{p}} \}\big|} \nonumber\\
&\leq |\Omega_{\alpha}| M^{q}
+\big(M^{q} -1 \big) \left[ \sum_{k=1}^{\infty}{M^{qk} \big|\Omega_{\alpha}\setminus D^{\alpha}_{(c M^{\frac{k(q-p)}{2p}})^{\frac{1}{n-1}}}\big|}  +  \sum_{k=1}^{\infty}{M^{qk} \big|\Omega_{\alpha}\setminus G_{M^k}(u,\Omega)\big|}\right]\nonumber\\
&\leq |\Omega_{\alpha}| M^{q}
+\big(M^{q} -1 \big) \left[ \frac{|\Omega|}{(C_n\epsilon)^2} c^{\ln{\sqrt{C_n \epsilon}}} \sum_{k=1}^{\infty}{ M^{k\left(q + (\frac{q}{p}-1)\frac{\ln{\sqrt{C_n \epsilon}}}{C}\right)}}   + \sum_{k=1}^{\infty}{M^{qk} \big|\Omega_{\alpha}\setminus G_{M^k}(u,\Omega)\big|}\right],\nonumber
\end{align*}
where we used \eqref{eq:distributionsetofsecondderivatives}
with $\kappa =q/p>1$ and $\beta =M^k$ in the second inequality and used \eqref{eq:exponentialdecayofthesetD} in the last inequality.
Since $\epsilon>0$ is small, the first summation in the last expression  is finite
and hence \eqref{Theta-est}  will follow if we can show that $\sum_{k=1}^{\infty}{M^{kq} |\Omega_{\alpha}\setminus G_{M^k}(u,\Omega)|}\leq C$. For this, let us employ 
 \eqref{improved-est} to obtain
\begin{align*}
\sum_{k=1}^{\infty}{M^{kq} |\Omega_{\alpha}\setminus G_{M^k}(u,\Omega)|}
&\leq a_1 \sum_{k=1}^{\infty} {M^{kq} (\sqrt{2\epsilon_0})^{k-1} } + \sum_{k=1}^{\infty} \sum_{i=0}^{k-1}{M^{kq} (\sqrt{2\epsilon_0})^{k-i}  b_i}\\
&=\frac{ a_1}{\sqrt{2\epsilon_0}} \sum_{k=1}^{\infty} {\big(M^{q} \sqrt{2\epsilon_0}\big)^{k} } + \sum_{i=0}^{\infty} \sum_{k=i+1}^{\infty}{M^{(k-i)q} (\sqrt{2\epsilon_0})^{k-i} M^{iq} b_i}\\
&=\frac{ a_1}{\sqrt{2\epsilon_0}} \sum_{k=1}^{\infty} {\big(M^{q} \sqrt{2\epsilon_0}\big)^{k} } +
\Big[\sum_{j=1}^{\infty}{\big(M^{q} \sqrt{2\epsilon_0}\big)^j}\Big]
 \Big[\sum_{i=0}^{\infty} { M^{iq} b_i}\Big]\\
 &=\frac{ a_1}{\sqrt{2\epsilon_0}} \sum_{k=1}^{\infty} {2^{-k} } +
\Big[\sum_{j=1}^{\infty}{2^{-j}}\Big]
 \Big[\sum_{i=0}^{\infty} { M^{iq} b_i}\Big]
= \frac{ a_1}{\sqrt{2\epsilon_0}}  +
 \sum_{i=0}^{\infty} { M^{iq} b_i}.
\end{align*}
But as $f^n \in L^{\frac{q}{n}}(\Omega)$ and $q>n$, we have from  Theorem
\ref{strongtype} that
\begin{align*}
\int_{\Omega_{\alpha_0}}{\Big|\mathcal M_\mu \Big(\big(\frac{f}{\det D^2\phi}\big)^n\Big)(x) \Big|^{\frac{q}{n}} ~d\mu(x)}
\leq C_n\, \int_\Omega \Big|\frac{f}{\det
D^2 \phi}\Big|^{q}\, d\mu(y) \leq C
\end{align*}
implying
$ \sum_{i=0}^{\infty} { (M^n)^{i \frac{q}{n}} b_i}\leq C$.
Thus   $\sum_{k=1}^{\infty}{M^{kq} |\Omega_{\alpha}\setminus G_{M^k}(u,\Omega)|}\leq C$  and claim \eqref{Theta-est} is proved.

It follows from \eqref{Theta-est} and \cite[Proposition 1.1]{CC} that $D^2u\in L^p(\Omega_\alpha)$ and $\|D^2 u\|_{L^p(\Omega_{\alpha})}\leq 4\|\Theta(u)\|_{L^p(\Omega_{\alpha})}\leq C(p,q,\alpha,n)$. This gives
 \eqref{reduction} as desired and the proof is complete.
\end{proof}

We are finally in a position to prove the main result of the paper, Theorem~\ref{mainhomo}.

\begin{proof}[Proof of Theorem~\ref{mainhomo}]
Let $\epsilon=\epsilon(p,q,n)$ be the constant given by Theorem~\ref{premainhomo} corresponding to $\alpha=1/2$. Let $x\in\Omega'$ and suppose a section $S=S_\phi(x,\delta)\Subset\Omega$ is such that $|g(z)-g(x)|\leq \lambda \epsilon$, for each $z\in S$.
Then by the property of  sections \cite[Theorem~3.3.8]{G}, we have
\begin{equation}\label{eq:sectionisbetweenballs}
B(x, K_1 \delta)\subset S\subset B(x, K_2 \delta^b),
\end{equation}
with  $K_1, K_2, b$ positive constants depending only on $\lambda, \Lambda$ and $n$. 
Let $Tx=A x +b$ be an affine map normalizing $S$ and consider the following functions on $\tilde \Omega:=T(S)$:
\begin{align*}
&\tilde \phi(y) := \frac{|\det A|^{\frac{2}{n}}}{g(x)^{\frac{1}{n}}}\left[ \phi(T^{-1} y) - \phi(x)
- \nabla \phi(x)\cdot (T^{-1} y-x) -\delta\right],\\
&\text{and}\quad  \tilde u(y):= |\det A|^{\frac{2}{n}} g(x)^{\frac{n-1}{n}} u(T^{-1}y).
\end{align*}
We have $\tilde \Omega$
is normalized, $\tilde\phi=0$ on $\partial \tilde \Omega$ and
$D^2\tilde \phi(y)= \frac{|\det A|^{\frac{2}{n}}}{g(x)^{\frac{1}{n}}}\, (A^{-1})^t\,
D^2\phi(T^{-1}y) \, A^{-1}$.
Thus $\det D^2 \tilde\phi(y)=\frac{g(T^{-1} y)}{g(x)}=:\tilde g(y)$ and if $\tilde \Phi(y)$ is  the cofactor matrix of $D^2\tilde
\phi(y)$, then
\[
\calL_{\tilde \phi}\tilde u (y)=\trace (\tilde \Phi(y) \,D^2\tilde
u(y))=\trace \big(\Phi(T^{-1}y) \,D^2 u(T^{-1} y)\big)=f(T^{-1} y)=:\tilde f(y) \quad\mbox{in }\tilde \Omega.
\]
Moreover since $g(x)-\lambda\epsilon \leq g(z)\leq g(x) +\lambda \epsilon$ for $z\in S$ and $g\geq \lambda$, we get
\[
1-\eps\leq
1-\frac{\epsilon\lambda}{g(x)}\leq\tilde g(y)\leq 1 +\frac{\lambda\epsilon}{g(x)}\leq 1 + \eps\quad \mbox{for}\quad y\in\tilde\Omega. 
\]
Therefore, we can  apply Theorem~\ref{premainhomo} to obtain
\begin{align}\label{eq:localized-Lp-est}
\left(\int_{\tilde\Omega_{1/2}}{|D^2\tilde u(y)|^p ~dy}\right)^{1/p} 
&\leq C(p,q,n)
\Big(\|\tilde u\|_{L^\infty(\tilde \Omega)}
+\|\tilde f\|_{L^{q}(\tilde\Omega)} \Big)\\
&=C 
\Big(|\det A|^{\frac{2}{n}} g(x)^{\frac{n-1}{n}} \| u\|_{L^\infty(S)}
+|\det A|^{\frac{1}{q}} \|f\|_{L^{q}(S)} \Big).\nonumber
\end{align}
By the definition of $\tilde u$ we have
$ D^2u(z)= |\det A|^{\frac{-2}{n}} g(x)^{\frac{1-n}{n}} A^t\,D^2\tilde u(T z)\, A $ in $ S$, and consequently  $\|D^2u\|_{L^p(S_{1/2})}
\leq \|A\|^2 \, |\det A|^{-(\frac{2}{n} +\frac{1}{p})} g(x)^{\frac{1-n}{n}} \|D^2 \tilde u\|_{L^p(\tilde\Omega_{1/2})}$ where $S_{1/2}:= S_\phi(x, \delta/2)$.
Notice that $|\det A| \approx \delta^{-n/2}$ by the normalization, and $\|A\|\leq C \delta^{-1}$ by the fact $A B(x, K_1 \delta) + b \subset B_n(0)$ following from \eqref{eq:sectionisbetweenballs}.
Hence we deduce from \eqref{eq:localized-Lp-est} that
\begin{align}\label{eq:L^pestimateonsections}
\left(\int_{S_{1/2}}{|D^2 u(z)|^p \,dz}\right)^{1/p} 
&\leq C \|A\|^2 |\det A|^{\frac{-1}{p}}
\Big( \| u\|_{L^\infty(S)}
+|\det A|^{\frac{1}{q} -\frac{2}{n}} g(x)^{\frac{1-n}{n}} \|f\|_{L^{q}(S)} \Big)\\
&\leq C \delta^{\frac{n}{2 p} -2}\|u\|_{L^\infty(\Omega)}
+ C \delta^{\frac{n}{2 p}-\frac{n}{2 q} -1}\|f\|_{L^{q}(\Omega)},\nonumber
\end{align}
where $C$ depends only on $p$, $q$,  $\lambda$, $\Lambda$ and $n$.
 
Now since $\Omega'\Subset\Omega$, we can pick $\delta$ small depending only on the parameters $\lambda, \Lambda, n, \dist(\Omega',\partial\Omega)$ and the modulus of continuity of $g$ such that for each $x\in \Omega'$ we have $B(x, K_2 \delta^b)\Subset\Omega$ and
$|g(z)-g(x)|\leq \lambda \epsilon$ in $B(x, K_2 \delta^b)$. Next select a finite covering of $\Omega'$ by balls $\{B(x_j, K_1 \delta)\}_{j=1}^N$ with $x_j\in \Omega'$, then the  desired inequality follows by adding \eqref{eq:L^pestimateonsections} over $\{S_\phi(x_j, \delta/2)\}_{j=1}^N$. 
\end{proof}

In this paper we have chosen to work with strong solutions in $W^{2,n}_{loc}(\Omega)$ in order to reveal direct calculations. However, the interior $W^{2,p}$ estimates in Theorem~\ref{premainhomo} and Theorem~\ref{mainhomo} can be derived for viscosity solutions of $\calL_\phi u=f$ by modifying slightly the definition of the set $G_M(u,\Omega)$ and following our arguments. For this purpose we note that the Caffarelli-Guti\'errez interior H\"older estimates, which were used in Lemma~\ref{explest}, still hold for viscosity solutions as observed by Trudinger and Wang in \cite{TW4}.

\bibliographystyle{plain}

\end{document}